\tikzstyle arrowstyle=[scale=2]
\tikzstyle mdirected=[postaction={decorate,decoration={markings,
    mark=at position.5 with {\arrow[arrowstyle]{stealth}}}}]
\tikzstyle edirected=[postaction={decorate,decoration={markings,
    mark=at position 1.0 with {\arrow[arrowstyle]{stealth}}}}]
\tikzset{bullet/.style={
shape = circle,fill = black, inner sep = 0pt, outer sep = 0pt, minimum size = 0.35em, line width = 0pt, draw=black!100}}
\newcommand\addvmargin[1]{
  \node[fit=(current bounding box),inner ysep=#1,inner xsep=0]{};
}
\newcolumntype{C}[1]{>{\centering\arraybackslash}m{#1}}
\renewcommand{\baselinestretch}{1.1}
\newtheorem{thm}{Theorem}[section]
\newtheorem{prop}[thm]{Proposition}
\theoremstyle{definition}
\numberwithin{equation}{section}
\theoremstyle{definition}
\theoremstyle{definition}
\newtheorem{remark}{Remark}[section]
\theoremstyle{definition}
\numberwithin{equation}{section}
\newcommand{\ra}{{\rightarrow}}
\newcommand{\nra}{{\nrightarrow}}
\newcommand{\gen}{\text{gen}}
\newcommand{\spn}{\text{spn}}
\newcommand{\ord}{\text{ord}}
\newcommand{\diag}{\text{diag}}
\newcommand{\z}{{\mathbb Z}}
\newcommand{\q}{{\mathbb Q}}
\newcommand{\f}{{\mathbb F}}
\newcommand{\rank}{\text{rank}}
\newcommand{\n}{{\mathbb N}}
\newcommand{\e}{{\epsilon}}
\newcommand{\RN}[1]{%
  \textup{\uppercase\expandafter{\romannumeral#1}}%
}
\begin{document}

\title[Rational homology disk smoothings and Lefschetz fibrations]{Rational homology disk smoothings and Lefschetz fibrations}

\author{Hakho Choi}
\address{School of Mathematics, Korea Institute for Advanced Study, 85 Hoegiro, Dongdaemun-gu, Seoul 02455, Republic of Korea }
\email{hakho@kias.re.kr}
\address{Center for Quantum Structures in Modules and Spaces, Seoul National University,
Seoul 08226, Korea}
\email{hako85@snu.ac.kr}

\thanks{}
\subjclass[2010]{57K43, 53D05, 14J17}%
\keywords{Lefschetz fibration, Rational homology disk smoothing, Rational blowdowns}
\date{July 19, 2022}


\begin{abstract} 
In this article, we generalize the results discussed in \cite{MR2783383} by introducing a genus to generic fibers of Lefschetz fibrations. That is, we give families of relations in the mapping class groups of genus-1 surfaces with boundaries that represent rational homology disk smoothings of weighted homogeneous surface singularities whose resolution graphs are $3$-legged with a bad central vertex.
\end{abstract}

\maketitle
\hypersetup{linkcolor=black}

\section{Introduction}
Rational blowdown surgery, which was introduced by Fintushel-Stern~\cite{MR1484044} and generalized by J. Park~\cite{MR1490654}, is a surgery operation that replace a linear plumbing $C_{p,q}$ of $2$-spheres with a rational homology ball $B_{p,q}$ (i.e., $H_{*}(B_{p,q}, \mathbb{Q})\cong H_{*}(B^4, \mathbb{Q})$). As rational blowdown surgery reduces the second Betti number and the Seiberg-Witten invariants of the surgered manifold are determined by that of the original manifold under mild conditions, it is one of the most powerful tools in constructing smooth $4$-manifolds with small Euler characteristic~\cite{MR2169045}~\cite{MR2125736} ~\cite{MR2189231}. Further, it can be used to construct simply connected complex surface of general type with $p_g=0$ and $K^2=2, 3, 4$ because $C_{p,q}$ is the minimal resolution of cyclic quotient surfaces singularities $A_{p^2,pq-1}$, and $B_{p,q}$ is the rational homology disk smoothing (i.e., Milnor fiber with vanishing Milnor number) of $A_{p^2,pq-1}$ ~\cite{MR2357500}~\cite{MR2469529}~\cite{MR2496050}.
From these perspectives, researchers attempted to identify other normal surface singularities admitting rational homology disk smoothing ($\mathbb{Q}$HD for short). In particular,
there is a complete classification of resolution graphs admitting $\mathbb{Q}$HD smoothing for the case of weighted homogeneous surface singularities~\cite{MR2399141}, ~\cite{MR2843099}. They are all $3$-legged or $4$-legged graphs. We focus on $3$-legged graphs in this article (refer to Figure~\ref{resolutiongraphs} for the complete list, and for an exhaustive list of $4$-legged cases, refer to Figure 2 in ~\cite{MR2843099}). 
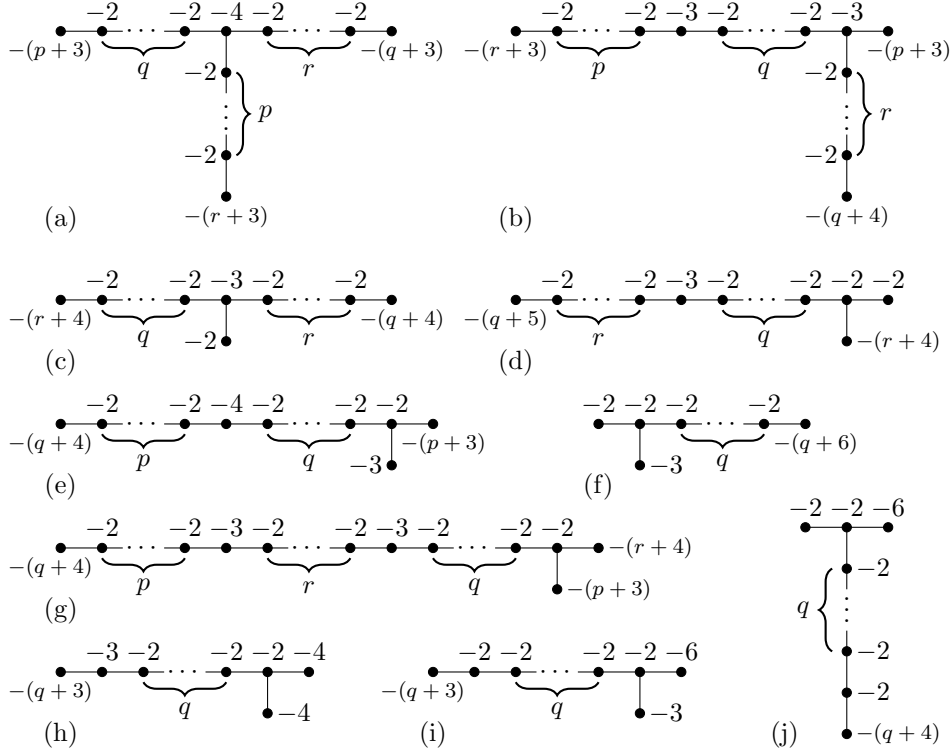
\begin{figure}[h]
\begin{tikzpicture}[scale=0.55]
\begin{scope}
\node[bullet] at (0,0){};
\node[bullet] at (1,0){};
\node[bullet] at (3,0){};
\node[bullet] at (4,0){};
\node[bullet] at (5,0){};
\node[bullet] at (7,0){};
\node[bullet] at (8,0){};

\node[bullet] at (4,-1){};
\node[bullet] at (4,-3){};
\node[bullet] at (4,-4){};

\node[below] at (-0.25,0){\footnotesize{$-(p+3)$}};
\node[above] at (1,0){$-2$};
\node[above] at (3,0){$-2$};
\node[above] at (4,0){$-4$};
\node[above] at (5,0){$-2$};
\node[above] at (7,0){$-2$};
\node[below] at (8.25,0){\footnotesize{$-(q+3)$}};

\node[left] at (4,-1){$-2$};
\node[left] at (4,-3){$-2$};
\node[below] at (4,-4){\footnotesize{$-(r+3)$}};
\node[below] at (0,-4){(a)};

\node at (2,0){$\cdots$};
\node at (6,0){$\cdots$};
\node at (4,-1.9){$\vdots$};

\draw (0,0)--(1.5,0);
\draw (2.5,0)--(5.5,0);
\draw (6.5,0)--(8,0);
\draw (4,0)--(4,-1.5);
\draw (4,-2.5)--(4,-4);

	\draw [thick,decorate,decoration={brace,mirror,amplitude=5pt},xshift=0pt,yshift=-7pt]
	(1,0) -- (3,0) node [black,midway,yshift=-11pt] 
	{$q$};

	\draw [thick,decorate,decoration={brace,mirror,amplitude=5pt},xshift=0pt,yshift=-7pt]
	(5,0) -- (7,0) node [black,midway,yshift=-11pt] 
	{$r$};

	\draw [thick,decorate,decoration={brace,amplitude=5pt},xshift=0pt,xshift=7pt]
	(4,-1) -- (4,-3) node [black,midway,xshift=11pt] 
	{$p$};
\end{scope}

\begin{scope}[shift={(11.,0)}]
\node[bullet] at (0,0){};
\node[bullet] at (1,0){};
\node[bullet] at (3,0){};
\node[bullet] at (4,0){};
\node[bullet] at (5,0){};
\node[bullet] at (7,0){};
\node[bullet] at (8,0){};

\node[bullet] at (8,-1){};
\node[bullet] at (8,-3){};
\node[bullet] at (8,-4){};
\node[bullet] at (9, 0){};

\node[below] at (-.25,0){\footnotesize{$-(r+3)$}};
\node[above] at (1,0){$-2$};
\node[above] at (3,0){$-2$};
\node[above] at (4,0){$-3$};
\node[above] at (5,0){$-2$};
\node[above] at (7,0){$-2$};
\node[above] at (8,0){$-3$};

\node[left] at (8,-1){$-2$};
\node[left] at (8,-3){$-2$};
\node[below] at (8,-4){\footnotesize{$-(q+4)$}};
\node[below] at (9.5,0){\footnotesize{$-(p+3)$}};

\node at (2,0){$\cdots$};
\node at (6,0){$\cdots$};
\node at (8,-1.9){$\vdots$};

\draw (0,0)--(1.5,0);
\draw (8,0)--(9,0);
\draw (2.5,0)--(5.5,0);
\draw (6.5,0)--(8,0);
\draw (8,0)--(8,-1.5);
\draw (8,-2.5)--(8,-4);

	\draw [thick,decorate,decoration={brace,mirror,amplitude=5pt},xshift=0pt,yshift=-7pt]
	(1,0) -- (3,0) node [black,midway,yshift=-11pt] 
	{$p$};

	\draw [thick,decorate,decoration={brace,mirror,amplitude=5pt},xshift=0pt,yshift=-7pt]
	(5,0) -- (7,0) node [black,midway,yshift=-11pt] 
	{$q$};

	\draw [thick,decorate,decoration={brace,amplitude=5pt},xshift=0pt,xshift=7pt]
	(8,-1) -- (8,-3) node [black,midway,xshift=11pt] 
	{$r$};
\node[below] at (0,-4){(b)};
\end{scope}


\begin{scope}[shift={(0,-6.5)}]
\node[bullet] at (0,0){};
\node[bullet] at (1,0){};
\node[bullet] at (3,0){};
\node[bullet] at (4,0){};
\node[bullet] at (5,0){};
\node[bullet] at (7,0){};
\node[bullet] at (8,0){};

\node[bullet] at (4,-1){};

\node[below] at (-0.25,0){\footnotesize{$-(r+4)$}};
\node[above] at (1,0){$-2$};
\node[above] at (3,0){$-2$};
\node[above] at (4,0){$-3$};
\node[above] at (5,0){$-2$};
\node[above] at (7,0){$-2$};
\node[below] at (8.25,0){\footnotesize{$-(q+4)$}};

\node[left] at (4,-1){$-2$};
\node at (0,-1.5){(c)};

\node at (2,0){$\cdots$};
\node at (6,0){$\cdots$};

\draw (0,0)--(1.5,0);
\draw (2.5,0)--(5.5,0);
\draw (6.5,0)--(8,0);
\draw (4,0)--(4,-1.);

	\draw [thick,decorate,decoration={brace,mirror,amplitude=5pt},xshift=0pt,yshift=-7pt]
	(1,0) -- (3,0) node [black,midway,yshift=-11pt] 
	{$q$};

	\draw [thick,decorate,decoration={brace,mirror,amplitude=5pt},xshift=0pt,yshift=-7pt]
	(5,0) -- (7,0) node [black,midway,yshift=-11pt] 
	{$r$};

\end{scope}


\begin{scope}[shift={(11,-6.5)}]
\node[bullet] at (0,0){};
\node[bullet] at (1,0){};
\node[bullet] at (3,0){};
\node[bullet] at (4,0){};
\node[bullet] at (5,0){};
\node[bullet] at (7,0){};
\node[bullet] at (8,0){};
\node[bullet] at (9,0){};

\node[bullet] at (8,-1){};

\node[below] at (-0.25,0){\footnotesize{$-(q+5)$}};
\node[above] at (1,0){$-2$};
\node[above] at (3,0){$-2$};
\node[above] at (4,0){$-3$};
\node[above] at (5,0){$-2$};
\node[above] at (7,0){$-2$};
\node[above] at (8,0){$-2$};
\node[above] at (9,0){$-2$};

\node[right] at (8,-1){\footnotesize{$-(r+4)$}};
\node at (0,-1.5){(d)};

\node at (2,0){$\cdots$};
\node at (6,0){$\cdots$};

\draw (0,0)--(1.5,0);
\draw (2.5,0)--(5.5,0);
\draw (6.5,0)--(9,0) (8,0)--(8,-1);

	\draw [thick,decorate,decoration={brace,mirror,amplitude=5pt},xshift=0pt,yshift=-7pt]
	(1,0) -- (3,0) node [black,midway,yshift=-11pt] 
	{$r$};

	\draw [thick,decorate,decoration={brace,mirror,amplitude=5pt},xshift=0pt,yshift=-7pt]
	(5,0) -- (7,0) node [black,midway,yshift=-11pt] 
	{$q$};

\end{scope}


\begin{scope}[shift={(0,-9.5)}]
\node[bullet] at (0,0){};
\node[bullet] at (1,0){};
\node[bullet] at (3,0){};
\node[bullet] at (4,0){};
\node[bullet] at (5,0){};
\node[bullet] at (7,0){};
\node[bullet] at (8,0){};
\node[bullet] at (9,0){};

\node[bullet] at (8,-1){};

\node[below] at (-0.25,0){\footnotesize{$-(q+4)$}};
\node[above] at (1,0){$-2$};
\node[above] at (3,0){$-2$};
\node[above] at (4,0){$-4$};
\node[above] at (5,0){$-2$};
\node[above] at (7,0){$-2$};
\node[above] at (8,0){$-2$};
\node[below] at (9.25,0){\footnotesize{$-(p+3)$}};

\node[left] at (8,-1){$-3$};
\node at (0,-1.5){(e)};

\node at (2,0){$\cdots$};
\node at (6,0){$\cdots$};

\draw (0,0)--(1.5,0);
\draw (2.5,0)--(5.5,0);
\draw (6.5,0)--(9,0) (8,0)--(8,-1);

	\draw [thick,decorate,decoration={brace,mirror,amplitude=5pt},xshift=0pt,yshift=-7pt]
	(1,0) -- (3,0) node [black,midway,yshift=-11pt] 
	{$p$};

	\draw [thick,decorate,decoration={brace,mirror,amplitude=5pt},xshift=0pt,yshift=-7pt]
	(5,0) -- (7,0) node [black,midway,yshift=-11pt] 
	{$q$};

\end{scope}


\begin{scope}[shift={(13,-9.5)}]
\node[bullet] at (0,0){};
\node[bullet] at (1,0){};
\node[bullet] at (1,-1){};

\node[bullet] at (2,0){};
\node[bullet] at (4,0){};
\node[bullet] at (5,0){};

\node[above] at (-0.,0){$-2$};
\node[above] at (1,0){$-2$};
\node[above] at (2,0){$-2$};
\node[above] at (4,0){$-2$};
\node[right] at (1,-1){$-3$};

\node[below] at (5.25,0){\footnotesize{$-(q+6)$}};
\node at (0,-1.5){(f)};

\node at (3,0){$\cdots$};

\draw (0,0)--(2.5,0);
\draw (3.5,0)--(5,0) (1,0)--(1,-1);

	\draw [thick,decorate,decoration={brace,mirror,amplitude=5pt},xshift=0pt,yshift=-7pt]
	(2,0) -- (4,0) node [black,midway,yshift=-11pt] 
	{$q$};

\end{scope}


\begin{scope}[shift={(0,-12.5)}]
\node[bullet] at (0,0){};
\node[bullet] at (1,0){};
\node[bullet] at (3,0){};
\node[bullet] at (4,0){};
\node[bullet] at (5,0){};
\node[bullet] at (7,0){};
\node[bullet] at (8,0){};
\node[bullet] at (9,0){};
\node[bullet] at (11,0){};
\node[bullet] at (12,0){};
\node[bullet] at (13,0){};

\node[bullet] at (12,-1){};

\node[below] at (-0.25,0){\footnotesize{$-(q+4)$}};
\node[above] at (1,0){$-2$};
\node[above] at (3,0){$-2$};
\node[above] at (4,0){$-3$};
\node[above] at (5,0){$-2$};
\node[above] at (7,0){$-2$};
\node[above] at (8,0){$-3$};
\node[above] at (9.,0){$-2$};
\node[above] at (11.,0){$-2$};
\node[above] at (12.,0){$-2$};

\node at (14.25,0){\footnotesize{$-(r+4)$}};

\node[right] at (12,-1){\footnotesize{$-(p+3)$}};
\node at (0,-1.5){(g)};

\node at (2,0){$\cdots$};
\node at (6,0){$\cdots$};
\node at (10,0){$\cdots$};

\draw (0,0)--(1.5,0);
\draw (2.5,0)--(5.5,0);
\draw (6.5,0)--(9.5,0) (10.5,0)--(13,0) (12,0)--(12,-1);

	\draw [thick,decorate,decoration={brace,mirror,amplitude=5pt},xshift=0pt,yshift=-7pt]
	(1,0) -- (3,0) node [black,midway,yshift=-11pt] 
	{$p$};

	\draw [thick,decorate,decoration={brace,mirror,amplitude=5pt},xshift=0pt,yshift=-7pt]
	(5,0) -- (7,0) node [black,midway,yshift=-11pt] 
	{$r$};

\draw [thick,decorate,decoration={brace,mirror,amplitude=5pt},xshift=0pt,yshift=-7pt]
	(9,0) -- (11,0) node [black,midway,yshift=-11pt] 
	{$q$};

\end{scope}


\begin{scope}[shift={(0,-15.5)}]
\node[bullet] at (0,0){};
\node[bullet] at (1,0){};
\node[bullet] at (2,0){};
\node[bullet] at (4,0){};
\node[bullet] at (5,0){};
\node[bullet] at (6,0){};

\node[bullet] at (5,-1){};

\node[below] at (-0.25,0){\footnotesize{$-(q+3)$}};
\node[above] at (1,0){$-3$};
\node[above] at (2,0){$-2$};
\node[above] at (4,0){$-2$};
\node[above] at (5,0){$-2$};
\node[above] at (6,0){$-4$};

\node[right] at (5,-1){$-4$};
\node at (0,-1.5){(h)};

\node at (3,0){$\cdots$};

\draw (0,0)--(2.5,0);
\draw (3.5,0)--(6,0);
\draw (5,0)--(5,-1);

	\draw [thick,decorate,decoration={brace,mirror,amplitude=5pt},xshift=0pt,yshift=-7pt]
	(2,0) -- (4,0) node [black,midway,yshift=-11pt] 
	{$q$};

\end{scope}


\begin{scope}[shift={(9,-15.5)}]
\node[bullet] at (0,0){};
\node[bullet] at (1,0){};
\node[bullet] at (2,0){};
\node[bullet] at (4,0){};
\node[bullet] at (5,0){};
\node[bullet] at (6,0){};

\node[bullet] at (5,-1){};

\node[below] at (-0.25,0){\footnotesize{$-(q+3)$}};
\node[above] at (1,0){$-2$};
\node[above] at (2,0){$-2$};
\node[above] at (4,0){$-2$};
\node[above] at (5,0){$-2$};
\node[above] at (6,0){$-6$};

\node[right] at (5,-1){$-3$};
\node at (0,-1.5){(i)};

\node at (3,0){$\cdots$};

\draw (0,0)--(2.5,0);
\draw (3.5,0)--(6,0);
\draw (5,0)--(5,-1);

	\draw [thick,decorate,decoration={brace,mirror,amplitude=5pt},xshift=0pt,yshift=-7pt]
	(2,0) -- (4,0) node [black,midway,yshift=-11pt] 
	{$q$};

\end{scope}


\begin{scope}[shift={(18,-12.)}]
\node[bullet] at (0,0){};
\node[bullet] at (1,0){};
\node[bullet] at (2,0){};
\node[bullet] at (1,-1){};
\node[bullet] at (1,-3){};
\node[bullet] at (1,-4){};
\node[bullet] at (1,-5){};

\node[below] at (-0.25,0){};
\node[above] at (0,0){$-2$};
\node[above] at (1,0){$-2$};
\node[above] at (2,0){$-6$};
\node[right] at (1,-1){$-2$};
\node[right] at (1,-3){$-2$};
\node[right] at (1,-4){$-2$};
\node[right] at (1,-5){\footnotesize{$-(q+4)$}};

\node at (-0.5,-5){(j)};

\node at (1,-1.8){$\vdots$};

\draw (0,0)--(2,0);
\draw (1,0)--(1,-1.50);
\draw (1,-2.5)--(1,-5);

	\draw [thick,decorate,decoration={brace,mirror,amplitude=5pt},xshift=-10pt,yshift=0pt]
	(1,-1) -- (1,-3) node [black,midway,xshift=-11pt] 
	{$q$};

\end{scope}

\end{tikzpicture}
\caption{$3$-legged resolution graphs admitting $\mathbb{Q}$HD smoothing $(p,q,r\geq0)$} 
\label{resolutiongraphs}
\end{figure}

In this article, we aim to interpret $\mathbb{Q}$HD smoothings in terms of Lefschetz fibrations. A $\mathbb{Q}$HD smoothing is a Stein filling of the link of a weighted homogeneous surface singularity with the Milnor fillable contact structure. As the existence of positive allowable Lefschetz fibration(PALF for short) on a Stein filling is well known in general ~\cite{MR1825664}~\cite{MR1835390}, an explicit monodromy description of the filling is of great interest. The simplest example of this is the famous lantern relation $abcd=xyz$ in the mapping class group of $4$-holed sphere, where each letter stands for right-handed Dehn twists of curves, as depicted in Figure~\ref{lantern}. Here, a Lefschetz fibration $X$ with monodromy $abcd$ is diffeomorphic to the minimal resolution of $A_{4,1}$ singularity whose link is diffeomorphic to a lens space $L(4,1)$ while a Lefschetz fibration $Y$ with monodromy $xyz$ is diffeomorphic to the $\mathbb{Q}$HD smoothing of the singularity~\cite{MR2566578}. Furthermore, the equality in the relation implies that the boundary of $X$ and $Y$ are diffeomorphic and the induced contact structures on the boundary are isotopic to each other, which is isotopic to the Milnor fillable contact structure.
\begin{figure}[h]
\begin{center}
\begin{tikzpicture}[scale=1.1]
\begin{scope}
\draw (3,-3) ellipse (2 and 1.2);
\draw (3,-3) circle (0.15);
\draw (4,-3) circle (0.15);
\draw (2,-3) circle (0.15);
\draw[red] (3,-3) circle (0.35);
\draw[red] (4,-3) circle (0.35);
\draw[red] (2,-3) circle (0.35);
\draw[red] (3,-3) ellipse (1.8 and 0.8);
\draw (2,-3.35) node[below] {$a$};
\draw (3,-3.35) node[below] {$b$};
\draw (4,-3.35) node[below] {$c$};
\draw (3.45,-4) node {$d$};
\end{scope}
\begin{scope}[shift={(5,0)}]
\draw (3,-3) ellipse (2 and 1.2);
\draw (3,-3) circle (0.15);
\draw (4.25,-3) circle (0.15);
\draw (1.75,-3) circle (0.15);
\draw[red] (1.5,-3) arc (180:360: 0.25);
\draw[red] (4,-3) arc (180:360: 0.25);
\draw[red] (4.5,-3) to [out=up, in=up] (1.5,-3);
\draw[red] (2,-3) to [out=up, in=up] (4,-3);
\draw[red] (3.675,-3) ellipse (1.1 and 0.4); 
\draw[red] (2.375,-3) ellipse (1.1 and 0.4); 
\draw (1.75,-3.5) node {$x$};
\draw (3,-2) node {$z$};
\draw (4.25,-3.5) node {$y$};

\end{scope}
\end{tikzpicture}
\end{center}
\caption{Lantern relation}
\label{lantern}
\end{figure}
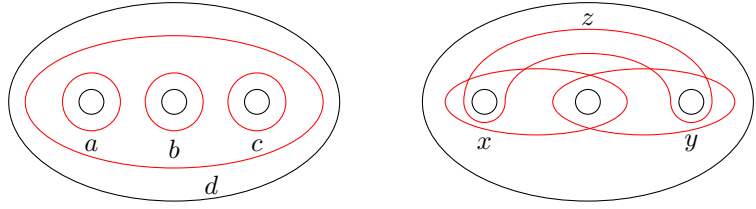
Therefore, asking whether other realtions that  describe $\mathbb{Q}$HD smoothings exist is natural. In ~\cite{MR2783383}, relations in the mapping class group of planar surfaces corresponding to $\mathbb{Q}$HD smoothings of  $A_{p^2,pq-1}$ and weighted homogeneous surface singularities with resolution graphs belonging to (a), (b) and (c) families in Figure~\ref{resolutiongraphs} were given by Endo-Mark-Van Horn-Morris. In the resolution graphs depicted in Figure~\ref{resolutiongraphs}, each vertex $v$ corresponds to an irreducible component $E_v$ of the exceptional divisor $E$, which is topologically $2$-sphere, and each edge corresponds to an intersection between the irreducible components $E_v$. We denote the number of edges connected to a vertex $v$ as the \emph{valence} of $v$, and the self-intersection of $E_v$ as the \emph{degree} of $v$. If the absolute value of the degree of $v$ is strictly less than the valence of $v$, we call the vertex $v$ a \emph{bad vertex}. Note that central vertices in (a), (b), and (c) families in Figure~\ref{resolutiongraphs} are not bad vertices, while central vertices in other families are bad. In this article, we construct genus-$1$ Lefschetz fibrations on $\mathbb{Q}$HD smoothings containing bad central vertices in their resolution graphs.
\begin{thm}
For each resolution graph $\Gamma$ in  Figure~\ref{resolutiongraphs} with bad central vertex, there is a relation $W_{\Gamma}=W_{\Gamma}'$ between words of right-handed Dehn twists in mapping class group of a genus-$1$ surface with boundaries such that Lefschetz fibration $X_{\Gamma}$ with monodromy $W_{\Gamma}$ is diffeomorphic to the minimal resolution of corresponding singularity $S_{\Gamma}$ and Lefschetz fibration $Y_{\Gamma}$ with monodromy $W_{\Gamma}'$ is a rational homology ball.
\label{Mainthm}
\end{thm}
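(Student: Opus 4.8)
The plan is to treat the families (d)--(j) of Figure~\ref{resolutiongraphs}, whose central vertices are bad, one at a time, and for each to write down an \emph{explicit} positive factorization $W_\Gamma$ together with a second positive word $W_\Gamma'$ in the mapping class group $\mathrm{MCG}(\Sigma_{1,n})$ of a genus-$1$ surface with a suitable number $n$ of boundary components. The argument then splits into three essentially independent tasks: (i) proving the word equality $W_\Gamma=W_\Gamma'$ as a relation in $\mathrm{MCG}(\Sigma_{1,n})$; (ii) identifying the total space $X_\Gamma$ of the Lefschetz fibration determined by $W_\Gamma$ with the minimal resolution of $S_\Gamma$; and (iii) showing that the total space $Y_\Gamma$ of the fibration determined by $W_\Gamma'$ is a rational homology ball. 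Since (i) forces $\partial X_\Gamma$ and $\partial Y_\Gamma$ to be orientation-preservingly diffeomorphic with isotopic induced contact structures, tasks (ii) and (iii) together exhibit the desired minimal resolution / $\mathbb{Q}\mathrm{HD}$ smoothing pair, in exact analogy with the lantern picture of Figure~\ref{lantern}.

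For task (ii) I would use the standard handle picture of a Lefschetz fibration over $D^2$: the total space is $\Sigma_{1,n}\times D^2$ with one $2$-handle attached along each vanishing cycle, framed $-1$ relative to the fiber framing. The vanishing cycles of $W_\Gamma$ are to be chosen so that this handlebody is visibly a plumbing. Concretely, each of the three legs of $\Gamma$ should be realized by a chain of curves whose successive right-handed Dehn twists produce a linear plumbing of the prescribed length and weights, while the bad central vertex is accounted for by the curve(s) wrapping the genus-$1$ part of the fiber. Converting $\Sigma_{1,n}\times D^2\cup(\text{2-handles})$ into the plumbing of Figure~\ref{resolutiongraphs} is then a Kirby-calculus computation: one cancels the $1$-handles coming from $\Sigma_{1,n}$ against suitable $2$-handles and tracks the resulting framings, so that the self-intersections match the degrees $-(p+3),-2,\dots$ read off from each graph. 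This step is routine in principle but must be carried out separately for each family, since the central weight and the leg lengths differ.

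For task (iii) the word $W_\Gamma'$ is obtained from $W_\Gamma$ by a \emph{monodromy substitution}, replacing a long positive sub-word by an equal positive word with fewer twists, exactly as the lantern substitution $abcd\mapsto xyz$ realizes a rational blowdown in the planar case of \cite{MR2783383}. The appropriate genus-$1$ model is the \emph{star relation} (the genus-$1$ analogue of the lantern relation) in $\mathrm{MCG}(\Sigma_{1,3})$, together with its multi-boundary variants; embedding such a relation into the factorization and performing the replacement yields $W_\Gamma'$ and simultaneously establishes task (i). To confirm that $Y_\Gamma$ is a rational homology ball I would compute its homology from the CW model $\Sigma_{1,n}\cup(\text{2-cells along the vanishing cycles of }W_\Gamma')$: the Euler characteristic is $\chi(\Sigma_{1,n})+k=-n+k$, where $k$ is the number of vanishing cycles, and one checks that $k=n+1$, so $\chi(Y_\Gamma)=1$, while the classes of the $n+1$ vanishing cycles form a basis of $H_1(\Sigma_{1,n};\mathbb{Q})\cong\mathbb{Q}^{\,n+1}$. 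The induced map $\mathbb{Q}^{\,n+1}\to H_1(\Sigma_{1,n};\mathbb{Q})$ is then an isomorphism, forcing $H_1(Y_\Gamma;\mathbb{Q})=H_2(Y_\Gamma;\mathbb{Q})=0$.

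The main obstacle is task (i): for each of the families (d)--(j) one must produce the correct explicit vanishing cycles and prove the positive relation $W_\Gamma=W_\Gamma'$ in the genus-$1$ mapping class group. Unlike the planar setting of \cite{MR2783383}, where the lantern relation and its generalizations suffice and the relevant curves are separating in a controlled way, here one is forced to work with non-separating curves and with the subtler relations available in $\mathrm{MCG}(\Sigma_{1,n})$; pinning down the star-type sub-relation that both realizes the rational blowdown and is compatible with the attached leg-chains is the delicate point. I expect to derive each relation by starting from the star relation supported on the central genus-$1$ piece and then conjugating and stabilizing it by the chain relations running along the three legs, verifying the final equality either by computing the action on a generating set of isotopy classes of curves or by reducing it to already-established lantern and chain relations.
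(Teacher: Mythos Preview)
Your three-task decomposition and your arguments for tasks (ii) and (iii) match the paper's strategy closely: the paper builds $W_\Gamma$ explicitly, verifies by Kirby calculus that $X_\Gamma$ is the plumbing (Proposition~\ref{prop1}), and observes that since $\partial X_\Gamma$ is a rational homology $3$-sphere, $Y_\Gamma$ is a rational homology ball precisely when the length of $W_\Gamma'$ equals $b_1(\Sigma_\Gamma)$. Your Euler-characteristic count $k=n+1$ is exactly this criterion, and the linear-independence check you propose is automatic from the boundary being a $\mathbb{Q}\mathrm{HS}^3$.

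Where your proposal diverges substantially is task (i). You suggest that the genus-$1$ \emph{star relation} is the engine behind the substitution, with chain relations along the legs. The paper does \emph{not} use the star relation at all. Instead, for every family it:
\begin{itemize}
\item writes $W_\Gamma$ via a specific horizontal/vertical decomposition of the graph (Figures~\ref{horizontal}--\ref{genericfiber}), so that the genus is introduced by gluing two planar pieces;
\item inserts cancelling pairs $c^{-1}\cdot c$ for carefully chosen curves $c$, then uses Hurwitz moves and braid relations to rearrange the word;
\item applies \emph{planar} relations---daisy relations, lantern relations, and the Endo--Mark--Van Horn-Morris relations for linear plumbings from \cite{MR2783383}---on planar subsurfaces embedded in $\Sigma_\Gamma$;
\item cancels the leftover $c^{-1}$ against a factor produced by the substitution.
\end{itemize}
The cancelling-pair trick is the essential idea you are missing: it is what allows planar relations to do the work even though the fiber has genus one. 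Your star-relation mechanism is a plausible heuristic, but you have not indicated which embedding of $\Sigma_{1,3}$ in $\Sigma_\Gamma$ would carry a star relation to the correct sub-word of $W_\Gamma$, nor why the leftover factors would have the right length; without this, task (i) remains a genuine gap. If you want to pursue your route, you would need to exhibit, family by family, a star-type relation whose long side matches a sub-word of $W_\Gamma$ up to Hurwitz equivalence---something the paper avoids entirely by staying with planar relations.
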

To prove Theorem~\ref{Mainthm}, we proceed as follows: For each resolution graph $\Gamma$ in Figure~\ref{resolutiongraphs} with a bad central vertex, we construct a genus-$1$ PALF $X_{\Gamma}$ on the minimal resolution of the singularity $S_{\Gamma}$ corresponding to $\Gamma$ and verify wheter the induced contact structure on the boundary is the Milnor fillable contact structure by computing the first Chern class.
Then, starting from the global monodromy $W_{\Gamma}$ of the $X_{\Gamma}$, we get another positive word $W_{\Gamma}'=W_{\Gamma}$ of right-handed Dehn twists by monodromy substitutions after introducing appropriate canceling pairs so that PALF $Y_{\Gamma}$ with global monodromy $W_{\Gamma}'$ is rationally homology ball filling of the link of $S_{\Gamma}$. 
\begin{remark}
From the Lefschetz fibration $Y_{\Gamma}$ we constructed, we obtain a rational homology ball filling of the link of $S_{\Gamma}$. Hence, one may ask whether the total space of $Y_{\Gamma}$ is symplectic deformation equivalent or diffeomorphic to a $\mathbb{Q}$HD smoothing of $S_{\Gamma}$, which is given by complement of the compactifying divisor $K_{\Gamma}$ in a rational surface. By analyzing the method of constructing $\mathbb{Q}$HD smoothings, M. Bhupal and A. Stipsicz demonstrated that if $\Gamma$ is one of the resolution graphs in Figure~\ref{resolutiongraphs} (a), (b), (c), (d), (e), (f) or (g), then rational homology ball filling of the link of $S_{\Gamma}$ with the Milnor fillable contact structure is symplectically unique \cite[Theorem 1.1]{MR3203575}. 
We expect this result to be valid for the (h),(i) and (j) families. However, the uniqueness of the symplectic deformation or diffeomorphism type of rational homology ball filling is unknwon for those families.
\end{remark}
\subsection*{Acknowledgements}
The author thanks Kyungbae Park, Jongil Park and Ki-Heon Yun for their interests and valuable comments. The author
is supported by a KIAS Individual Grant (MG071002) at Korea Institute for Advanced Study

\section{Monodromy relations}
As the first step of the proof of Theorem~\ref{Mainthm}, we construct genus-$1$ Lefschetz fibrations on the minimal resolutions. If there is no bad vertex in the resolution graph $\Gamma$, there is well-known genus-$0$ PALF of Gay-Mark on the minimal resolution~\cite{MR3100798} (See also ~\cite{MR2272094}): We consider the $2$-sphere $\Sigma_{i}$ with $b_i$ holes for each vertex $v_i$ with degree $-b_i$. Then, the generic fiber $\Sigma$ is obtained by gluing $\Sigma_{i}$ along their boundaries according to $\Gamma$, and the global monodromy is given by the product of right-handed Dehn twists on curves parallel to the boundary of each $\Sigma_{i}$. We end up with only one right-handed Dehn twist on the connecting neck.  
For the resolution graphs in Figure~\ref{resolutiongraphs} with bad central vertex, we construct PALFs on the minimal resolutions by introducing a genus on the generic fibers, as in ~\cite{MR4125685}.  
\begin{figure} [h]
\begin{tikzpicture}[scale=0.7]
\draw (0,0) node[below]{$-2$};
\draw (1,0) node[below]{$-b_1$};
\draw (-1,0) node[below]{$-a_i$};
\draw (0,1) node[right]{$-c_1$};

\draw (-3.5,0) node[below]{$-a_1$};
\draw (3.5,0) node[below]{$-b_j$};
\draw (0,3.5) node[right]{$-c_k$};

\draw (-3.5,0)--(-3,0) (-1.5,0)--(1.5,0) (3,0)--(3.5,0) (0,0)--(0,1.5);
\draw (0,3)--(0,3.5);
\draw (0,0 ) node[circle, fill, inner sep=1.5pt, black]{};
\draw (1,0 ) node[circle, fill, inner sep=1.5pt, black]{};
\draw (-1,0 ) node[circle, fill, inner sep=1.5pt, black]{};
\draw (2.25,0) node{$\cdots$};
\draw (-2.25,0) node{$\cdots$};
\draw (0,2.25) node{$\vdots$};

\draw (0,1 ) node[circle, fill, inner sep=1.5pt, black]{};
\draw (0,3.5 ) node[circle, fill, inner sep=1.5pt, black]{};

\draw (3.5,0 ) node[circle, fill, inner sep=1.5pt, black]{};
\draw (-3.5,0 ) node[circle, fill, inner sep=1.5pt, black]{};

\end{tikzpicture}
\caption{$3$-legged plumbing graph $\Gamma$ with bad central vertex}
\label{resolutiongraph}
\end{figure}
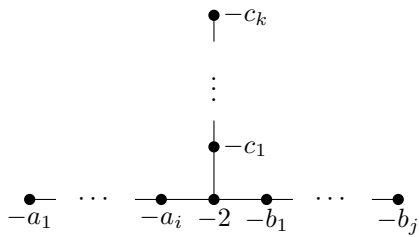
First, we construct genus-$0$ Lefschetz fibrations on horizontal and vertical part of a plumbing graph $\Gamma$ given in Figure~\ref{resolutiongraph} as illsutrated in Figure~\ref{horizontal} and Figure~\ref{vertical}: Let $\Sigma_1$ and $\Sigma_2$ be the generic fibers for horizontal and vertical parts, respectively. We denote a simple closed curve in $\Sigma_1$ enclosing $i^{\text{th}}$ hole by $\alpha_{i}$ and a simple closed curve in $\Sigma_1$ enclosing all holes from the first to $i^{\text{th}}$ hole by $\gamma_{i}$. Further, we denote a simple closed curve in $\Sigma_2$ enclosing $i^{\text{th}}$ hole by $\beta_i$ and a simple closed curve in $\Sigma_2$ enclosing all holes from the first to $i^{\text{th}}$ hole by $\delta_{i}$. The global monodromy of horizontal part can be written as $W_a\gamma_{N}^2W_b$, where $W_a$ is a word of right-handed Dehn twists along curves from degree $-a_n$ vertices, $W_b$ is a word of right-handed Dehn twists along curves from degree $-b_m$ vertices, and $N=(a_1+\cdots+a_i)-2i+1$. Similarly, the global monodromy corresponding to the vertical part can be written as $\beta_1\cdots\beta_{c_1-1}\delta_{c_1-1}W_c$, where $W_c$ is a word of right-handed Dehn twists along curves from degree $-c_l$ vertices with $l=2,\dots, k$. 
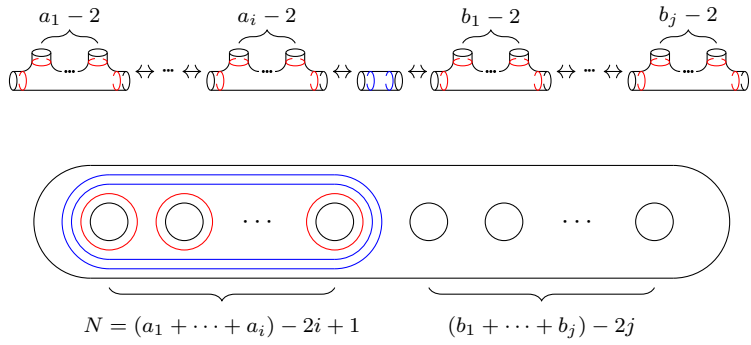
\begin{figure}
\begin{tikzpicture}[xscale=0.25, yscale=0.25]

\draw [decorate,decoration={brace,amplitude=5pt},xshift=0pt,yshift=0]
	(4.4,2.5) -- (7.4,2.5) node [black,midway,yshift=10pt] 
	{\footnotesize $a_1-2$};

\draw[rounded corners] (2.9,0)--(8.9,0) (2.9,1)--(3.9,1)--(3.9,2) (4.9,2)--(4.9,1)--(5.4,1) (6.4,1)--(6.9,1)--(6.9,2) (7.9,2)--(7.9,1)--(8.9,1);

\draw (5.7,1) node[circle, fill, inner sep=.4pt, black]{};
\draw (5.9,1) node[circle, fill, inner sep=.4pt, black]{};
\draw (6.1,1) node[circle, fill, inner sep=.4pt, black]{};

\draw (2.9,0) arc (-90:-270:0.2 and 0.5);
\draw (2.9,0) arc (-90:90:0.2 and 0.5);

\draw[red] (3.4,0) arc (-90:90:0.2 and 0.5);
\draw[dashed, red] (3.4,0) arc (-90:-270:0.2 and 0.5);





\draw (8.9,0) arc (-90:90:0.2 and 0.5);
\draw[dashed] (8.9,0) arc (-90:-270:0.2 and 0.5);

\draw[red] (8.4,0) arc (-90:90:0.2 and 0.5);
\draw[dashed, red] (8.4,0) arc (-90:-270:0.2 and 0.5);

\draw (3.9,2) arc (180:360:0.5 and 0.2);
\draw (3.9,2) arc (-180:-360:0.5 and 0.2);

\draw[red] (3.9,1.5) arc (180:360:0.5 and 0.2);
\draw[red,dashed] (3.9,1.5) arc (-180:-360:0.5 and 0.2);

\draw (6.9,2) arc (180:360:0.5 and 0.2);
\draw (6.9,2) arc (-180:-360:0.5 and 0.2);

\draw[red] (6.9,1.5) arc (180:360:0.5 and 0.2);
\draw[red,dashed] (6.9,1.5) arc (-180:-360:0.5 and 0.2);

\draw[<->] (9.4,1)--(10.4,1);
\draw (11.15,1) node {$\cdot$};
\draw (11.35,1) node {$\cdot$};
\draw (10.95,1) node {$\cdot$};

\draw[<->] (11.9,1)--(12.9,1);

\begin{scope}[shift={(10.5,0)}]

\draw [decorate,decoration={brace,amplitude=5pt},xshift=0pt,yshift=0]
	(4.4,2.5) -- (7.4,2.5) node [black,midway,yshift=10pt] 
	{\footnotesize $a_i-2$};

\draw[rounded corners] (2.9,0)--(8.9,0) (2.9,1)--(3.9,1)--(3.9,2) (4.9,2)--(4.9,1)--(5.4,1) (6.4,1)--(6.9,1)--(6.9,2) (7.9,2)--(7.9,1)--(8.9,1);

\draw (5.7,1) node[circle, fill, inner sep=.4pt, black]{};
\draw (5.9,1) node[circle, fill, inner sep=.4pt, black]{};
\draw (6.1,1) node[circle, fill, inner sep=.4pt, black]{};

\draw (2.9,0) arc (-90:-270:0.2 and 0.5);
\draw (2.9,0) arc (-90:90:0.2 and 0.5);

\draw[red] (3.4,0) arc (-90:90:0.2 and 0.5);
\draw[dashed, red] (3.4,0) arc (-90:-270:0.2 and 0.5);





\draw (8.9,0) arc (-90:90:0.2 and 0.5);
\draw[dashed] (8.9,0) arc (-90:-270:0.2 and 0.5);

\draw[red] (8.4,0) arc (-90:90:0.2 and 0.5);
\draw[dashed, red] (8.4,0) arc (-90:-270:0.2 and 0.5);

\draw (3.9,2) arc (180:360:0.5 and 0.2);
\draw (3.9,2) arc (-180:-360:0.5 and 0.2);

\draw[red] (3.9,1.5) arc (180:360:0.5 and 0.2);
\draw[red,dashed] (3.9,1.5) arc (-180:-360:0.5 and 0.2);

\draw (6.9,2) arc (180:360:0.5 and 0.2);
\draw (6.9,2) arc (-180:-360:0.5 and 0.2);

\draw[red] (6.9,1.5) arc (180:360:0.5 and 0.2);
\draw[red,dashed] (6.9,1.5) arc (-180:-360:0.5 and 0.2);
\draw[<->] (9.4,1)--(10.4,1);
\end{scope}

\begin{scope}[shift={(10.5+11.4,0)}]
\draw[rounded corners] (-0.5,0)--(1.5,0) (-0.5,1)--(1.5,1) ;
\draw (-0.50,0) arc (-90:90:0.2 and 0.5);
\draw (-0.50,0) arc (-90:-270:0.2 and 0.5);

\draw[blue] (0,0) arc (-90:90:0.2 and 0.5);
\draw[dashed, blue] (0,0) arc (-90:-270:0.2 and 0.5);

\draw (1.50,0) arc (-90:90:0.2 and 0.5);
\draw (1.50,0) arc (-90:-270:0.2 and 0.5);

\draw[blue] (1,0) arc (-90:90:0.2 and 0.5);
\draw[dashed, blue] (1,0) arc (-90:-270:0.2 and 0.5);
\draw[<->] (2,1)--(3,1);

\end{scope}

\begin{scope}[shift={(21.9+.5,0)}]

\draw [decorate,decoration={brace,amplitude=5pt},xshift=0pt,yshift=0]
	(4.4,2.5) -- (7.4,2.5) node [black,midway,yshift=10pt] 
	{\footnotesize $b_1-2$};

\draw[rounded corners] (2.9,0)--(8.9,0) (2.9,1)--(3.9,1)--(3.9,2) (4.9,2)--(4.9,1)--(5.4,1) (6.4,1)--(6.9,1)--(6.9,2) (7.9,2)--(7.9,1)--(8.9,1);

\draw (5.7,1) node[circle, fill, inner sep=.4pt, black]{};
\draw (5.9,1) node[circle, fill, inner sep=.4pt, black]{};
\draw (6.1,1) node[circle, fill, inner sep=.4pt, black]{};

\draw (2.9,0) arc (-90:-270:0.2 and 0.5);
\draw (2.9,0) arc (-90:90:0.2 and 0.5);

\draw[red] (3.4,0) arc (-90:90:0.2 and 0.5);
\draw[dashed, red] (3.4,0) arc (-90:-270:0.2 and 0.5);





\draw (8.9,0) arc (-90:90:0.2 and 0.5);
\draw[dashed] (8.9,0) arc (-90:-270:0.2 and 0.5);

\draw[red] (8.4,0) arc (-90:90:0.2 and 0.5);
\draw[dashed, red] (8.4,0) arc (-90:-270:0.2 and 0.5);

\draw (3.9,2) arc (180:360:0.5 and 0.2);
\draw (3.9,2) arc (-180:-360:0.5 and 0.2);

\draw[red] (3.9,1.5) arc (180:360:0.5 and 0.2);
\draw[red,dashed] (3.9,1.5) arc (-180:-360:0.5 and 0.2);

\draw (6.9,2) arc (180:360:0.5 and 0.2);
\draw (6.9,2) arc (-180:-360:0.5 and 0.2);

\draw[red] (6.9,1.5) arc (180:360:0.5 and 0.2);
\draw[red,dashed] (6.9,1.5) arc (-180:-360:0.5 and 0.2);

\draw[<->] (9.4,1)--(10.4,1);
\draw (11.15,1) node {$\cdot$};
\draw (11.35,1) node {$\cdot$};
\draw (10.95,1) node {$\cdot$};

\draw[<->] (11.9,1)--(12.9,1);
\end{scope}

\begin{scope}[shift={(22.4+10.5,0)}]

\draw [decorate,decoration={brace,amplitude=5pt},xshift=0pt,yshift=0]
	(4.4,2.5) -- (7.4,2.5) node [black,midway,yshift=10pt] 
	{\footnotesize $b_j-2$};

\draw[rounded corners] (2.9,0)--(8.9,0) (2.9,1)--(3.9,1)--(3.9,2) (4.9,2)--(4.9,1)--(5.4,1) (6.4,1)--(6.9,1)--(6.9,2) (7.9,2)--(7.9,1)--(8.9,1);

\draw (5.7,1) node[circle, fill, inner sep=.4pt, black]{};
\draw (5.9,1) node[circle, fill, inner sep=.4pt, black]{};
\draw (6.1,1) node[circle, fill, inner sep=.4pt, black]{};

\draw (2.9,0) arc (-90:-270:0.2 and 0.5);
\draw (2.9,0) arc (-90:90:0.2 and 0.5);

\draw[red] (3.4,0) arc (-90:90:0.2 and 0.5);
\draw[dashed, red] (3.4,0) arc (-90:-270:0.2 and 0.5);





\draw (8.9,0) arc (-90:90:0.2 and 0.5);
\draw[dashed] (8.9,0) arc (-90:-270:0.2 and 0.5);

\draw[red] (8.4,0) arc (-90:90:0.2 and 0.5);
\draw[dashed, red] (8.4,0) arc (-90:-270:0.2 and 0.5);

\draw (3.9,2) arc (180:360:0.5 and 0.2);
\draw (3.9,2) arc (-180:-360:0.5 and 0.2);

\draw[red] (3.9,1.5) arc (180:360:0.5 and 0.2);
\draw[red,dashed] (3.9,1.5) arc (-180:-360:0.5 and 0.2);

\draw (6.9,2) arc (180:360:0.5 and 0.2);
\draw (6.9,2) arc (-180:-360:0.5 and 0.2);

\draw[red] (6.9,1.5) arc (180:360:0.5 and 0.2);
\draw[red,dashed] (6.9,1.5) arc (-180:-360:0.5 and 0.2);
\end{scope}

\begin{scope}[shift={(0,-4)}]
\draw (7,0)--(38,0);
\draw (7,-6)--(38,-6);
\draw (7,0) arc (90:270:3);
\draw (38,0) arc (90:-90:3);
\draw (8,-3) circle (1);
\draw[red] (8,-3) circle (1.5);
\draw (12,-3) circle (1);
\draw[red] (12,-3) circle (1.5);

\draw (16,-3) node{$\cdots$};
\draw (20,-3) circle (1);
\draw[red] (20,-3) circle (1.5);

\draw (25,-3) circle (1);
\draw (29,-3) circle (1);
\draw (33,-3) node{$\cdots$};
\draw (37,-3) circle (1);

\draw[blue] (20,-1)--(8,-1);
\draw[blue] (20,-5)--(8,-5);
\draw[blue] (8,-1) arc (90:270:2);
\draw[blue] (20,-1) arc (90:-90:2);
\draw[blue] (20,-.5)--(8,-.5);
\draw[blue] (20,-5.5)--(8,-5.5);
\draw[blue] (8,-.5) arc (90:270:2.5);
\draw[blue] (20,-.5) arc (90:-90:2.5);
\draw [decorate,decoration={brace,mirror,amplitude=5pt},xshift=0pt,yshift=0]
	(8,-6.5) -- (20,-6.5) node [black,midway,yshift=-15pt] 
	{\footnotesize $N=(a_1+\cdots+a_i)-2i+1$};
\draw [decorate,decoration={brace,mirror,amplitude=5pt},xshift=0pt,yshift=0]
	(25,-6.5) -- (37,-6.5) node [black,midway,yshift=-15pt] 
	{\footnotesize $(b_1+\cdots+b_j)-2j$};

\end{scope}

\end{tikzpicture}
\caption{Global monodromy: $W_a \textcolor{blue}{\gamma_N^2} W_b$
}
\label{horizontal}
\end{figure}

\begin{figure}
\begin{tikzpicture}[xscale=0.25, yscale=0.25]

\begin{scope}[rotate=90]
\draw [decorate,decoration={brace,amplitude=5pt},xshift=0pt,yshift=0]
	(4.4,2.5) -- (7.4,2.5) node [black,midway,xshift=-20pt] 
	{\footnotesize $c_1-2$};

\draw[rounded corners] (2.9,0)--(8.9,0) (2.9,1)--(3.9,1)--(3.9,2) (4.9,2)--(4.9,1)--(5.4,1) (6.4,1)--(6.9,1)--(6.9,2) (7.9,2)--(7.9,1)--(8.9,1);

\draw (5.7,1) node[circle, fill, inner sep=.4pt, black]{};
\draw (5.9,1) node[circle, fill, inner sep=.4pt, black]{};
\draw (6.1,1) node[circle, fill, inner sep=.4pt, black]{};

\draw (2.9,0) arc (-90:-270:0.2 and 0.5);
\draw (2.9,0) arc (-90:90:0.2 and 0.5);

\draw[red] (3.4,0) arc (-90:90:0.2 and 0.5);
\draw[dashed, red] (3.4,0) arc (-90:-270:0.2 and 0.5);





\draw (8.9,0) arc (-90:90:0.2 and 0.5);
\draw[dashed] (8.9,0) arc (-90:-270:0.2 and 0.5);

\draw[red] (8.4,0) arc (-90:90:0.2 and 0.5);
\draw[dashed, red] (8.4,0) arc (-90:-270:0.2 and 0.5);

\draw (3.9,2) arc (180:360:0.5 and 0.2);
\draw (3.9,2) arc (-180:-360:0.5 and 0.2);

\draw[red] (3.9,1.5) arc (180:360:0.5 and 0.2);
\draw[red,dashed] (3.9,1.5) arc (-180:-360:0.5 and 0.2);

\draw (6.9,2) arc (180:360:0.5 and 0.2);
\draw (6.9,2) arc (-180:-360:0.5 and 0.2);

\draw[red] (6.9,1.5) arc (180:360:0.5 and 0.2);
\draw[red,dashed] (6.9,1.5) arc (-180:-360:0.5 and 0.2);

\draw[<->] (9.4,1)--(10.4,1);
\draw (11.15,1) node {$\cdot$};
\draw (11.35,1) node {$\cdot$};
\draw (10.95,1) node {$\cdot$};

\draw[<->] (11.9,1)--(12.9,1);

\begin{scope}[shift={(10.5,0)}]

\draw [decorate,decoration={brace,amplitude=5pt},xshift=0pt,yshift=0]
	(4.4,2.5) -- (7.4,2.5) node [black,midway,xshift=-20pt] 
	{\footnotesize $c_k-2$};

\draw[rounded corners] (2.9,0)--(8.9,0) (2.9,1)--(3.9,1)--(3.9,2) (4.9,2)--(4.9,1)--(5.4,1) (6.4,1)--(6.9,1)--(6.9,2) (7.9,2)--(7.9,1)--(8.9,1);

\draw (5.7,1) node[circle, fill, inner sep=.4pt, black]{};
\draw (5.9,1) node[circle, fill, inner sep=.4pt, black]{};
\draw (6.1,1) node[circle, fill, inner sep=.4pt, black]{};

\draw (2.9,0) arc (-90:-270:0.2 and 0.5);
\draw (2.9,0) arc (-90:90:0.2 and 0.5);

\draw[red] (3.4,0) arc (-90:90:0.2 and 0.5);
\draw[dashed, red] (3.4,0) arc (-90:-270:0.2 and 0.5);





\draw (8.9,0) arc (-90:90:0.2 and 0.5);
\draw[dashed] (8.9,0) arc (-90:-270:0.2 and 0.5);

\draw[red] (8.4,0) arc (-90:90:0.2 and 0.5);
\draw[dashed, red] (8.4,0) arc (-90:-270:0.2 and 0.5);

\draw (3.9,2) arc (180:360:0.5 and 0.2);
\draw (3.9,2) arc (-180:-360:0.5 and 0.2);

\draw[red] (3.9,1.5) arc (180:360:0.5 and 0.2);
\draw[red,dashed] (3.9,1.5) arc (-180:-360:0.5 and 0.2);

\draw (6.9,2) arc (180:360:0.5 and 0.2);
\draw (6.9,2) arc (-180:-360:0.5 and 0.2);

\draw[red] (6.9,1.5) arc (180:360:0.5 and 0.2);
\draw[red,dashed] (6.9,1.5) arc (-180:-360:0.5 and 0.2);
\end{scope}
\begin{scope}[shift={(0,-8)}]
\draw (5-0.5,0)--(18,0);
\draw (5-0.5,-6)--(18,-6);
\draw (5-0.5,0) arc (90:270:3);
\draw (18-0.5,0) arc (90:-90:3);
\draw (5-0.5,-3) circle (1);
\draw[red] (5-0.5,-3) circle (1.5);
\draw (10-0.5,-3) circle (1);
\draw[red] (10-0.5,-3) circle (1.5);
\draw (14-0.5,-3) circle (1);

\draw (19-0.5,-3) circle (1);

\draw (17.-0.5,-3) node[circle, fill, inner sep=0.5pt, black]{};
\draw (16.5-0.5,-3) node[circle, fill, inner sep=0.5pt, black]{};
\draw (16.-0.5,-3) node[circle, fill, inner sep=0.5pt, black]{};

\draw (8.-0.5,-3) node[circle, fill, inner sep=0.5pt, black]{};
\draw (7.5-0.5,-3) node[circle, fill, inner sep=0.5pt, black]{};
\draw (7.-0.5,-3) node[circle, fill, inner sep=0.5pt, black]{};

\draw[red] (5-0.5,-1)--(10-0.5,-1);
\draw[red] (5-0.5,-5)--(10-0.5,-5);
\draw[red] (5-0.5,-1) arc (90:270:2);
\draw[red] (10-0.5,-1) arc (90:-90:2);
\draw [decorate,decoration={brace,mirror,amplitude=5pt},xshift=0pt,yshift=0]
	(5-0.5,-6.5) -- (10-0.5,-6.5) node [black,midway,xshift=25pt] 
	{\footnotesize $c_1-1$};
\draw [decorate,decoration={brace,mirror,amplitude=5pt},xshift=0pt,yshift=0]
	(14-0.5,-6.5) -- (19-0.5,-6.5) node [black,midway,xshift=40pt,align=center] 
	{\footnotesize $(c_2+\cdots+c_k)$\\
	\footnotesize$-2(k-1)$};

\end{scope}
\end{scope}

\end{tikzpicture}
\caption{Global monodromy: $\beta_1\cdots \beta_{c_1-1} \delta_{c_1-1}W_c$
}
\label{vertical}
\end{figure}
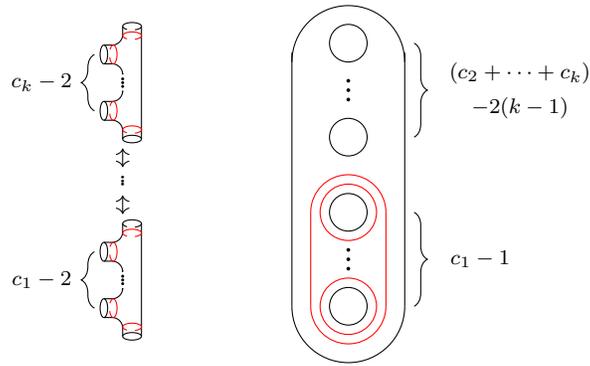

Now, we consider a genus-$1$ surface $\Sigma_\Gamma$ obtained from $\Sigma_1$ by attaching $b_1(\Sigma_2)$ $1$-handles as in Figure~\ref{genericfiber}. Then, we can naturally consider simple closed curves in $\Sigma_i$ as simple closed curves in $\Sigma_\Gamma$.
\begin{figure}
\begin{tikzpicture}[scale=0.2]

\draw (7,0)--(20,0) (21.5,0)--(24.5,0) (26,0)--(40,0);
\draw (7,-6)--(40,-6);
\draw (7,0) arc (90:270:3);
\draw (38+2,0) arc (90:-90:3);
\draw (8,-3) circle (1);
\draw (12,-3) circle (1);

\draw (16,-3) node{$\cdots$};
\draw (20,-2) arc (90:270:1);
\draw (21.5,-2) arc (90:-90:1);

\draw (21.5,-2) arc (90:-90:1);
\draw (20,-4)--(20,16.5);
\draw (21.5,-4)--(21.5,2.5);
\draw[red] (20.75,-4)--(20.75,8.5);
\draw[red] (20.75,8.5) arc (180:0:2.25 and 1.75);
\draw[red] (20.75,-4) arc (180:360:2.25 and 1.25);

\draw (21.5,2.5) arc (180:0:1.5);
\draw (20,16.5) arc (180:0:3);

\draw (23, 8.5) circle (1);
\draw (23, 11.5) circle (1);
\draw (23, 16.5) circle (1);

\draw (23, 5.5) node[circle,fill, inner sep=.5pt,black]{};
\draw (23, 5.2) node[circle,fill, inner sep=.5pt,black]{};
\draw (23, 5.8) node[circle,fill, inner sep=.5pt,black]{};

\draw (23, 13.7) node[circle,fill, inner sep=.5pt,black]{};
\draw (23, 14) node[circle,fill, inner sep=.5pt,black]{};
\draw (23, 14.3) node[circle,fill, inner sep=.5pt,black]{};

\draw (24+2,-2) arc (90:-90:1);
\draw (24.5,-2) arc (90:270:1);

\draw (26,-4)--(26,16.5);
\draw (24.5,-4)--(24.5,2.5);
\draw[red] (25.25,-4)--(25.25,8.5);
\draw [decorate,decoration={brace,mirror,amplitude=5pt},xshift=0pt,yshift=0]
	(26.5,11.5) -- (26.5,16.5) node [black,midway,xshift=40pt,align=center] 
	{\footnotesize $(c_2+\cdots+c_k)$\\
	\footnotesize$-2(k-1)$};


\draw (29+2,-3) circle (1);
\draw (33+2,-3) node{$\cdots$};
\draw (37+2,-3) circle (1);

\draw[blue] (20,-1.5)--(8,-1.5);
\draw[blue] (21.5,-4.5)--(8,-4.5);
\draw[blue] (8,-1.5) arc (90:270:1.5);

\draw[blue] (21.5,-1.5) arc (90:-90:1.5);
\draw [decorate,decoration={brace,mirror,amplitude=5pt},xshift=0pt,yshift=0]
	(8,-6.5) -- (20,-6.5) node [black,midway,yshift=-15pt] 
	{\footnotesize $N=(a_1+\cdots+a_i)-2i+1$};
\draw [decorate,decoration={brace,mirror,amplitude=5pt},xshift=0pt,yshift=0]
	(25,-6.5) -- (39,-6.5) node [black,midway,yshift=-15pt] 
	{\footnotesize $(b_1+\cdots+b_j)-2j$};

\end{tikzpicture}
\caption{Global monodromy: $\beta_1\cdots\beta_{c_1-1}W_a\textcolor{blue}{\gamma_N}\textcolor{red}{\delta_{c_1-1}}W_c\textcolor{blue}{\gamma_N}W_b$
}
\label{genericfiber}
\end{figure}
\begin{prop}
Let $X_\Gamma$ be a positive allowable Lefschetz fibration with generic fiber $\Sigma_\Gamma$, and global monodromy $\beta_1\cdots\beta_{c_1-1}W_a{\gamma_N}{\delta_{c_1-1}}W_c{\gamma_N}W_b$. Then, total space of $X_\Gamma$ is diffeomorphic to the plumbing of $2$-spheres according to $\Gamma$. Furthermore, the first Chern class of $X_\Gamma$ satisfies the adjunction equality for each vertex in $\Gamma$.
\label{prop1}
\end{prop}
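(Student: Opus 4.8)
The plan is to translate the Lefschetz fibration $X_\Gamma$ into an explicit handle decomposition and then reduce it by Kirby calculus. Recall that a PALF over the disk with fiber $\Sigma_\Gamma$ and an ordered tuple of vanishing cycles has total space $\Sigma_\Gamma\times D^2$ with one $2$-handle attached along each vanishing cycle, the attaching framing being one less than the page (surface) framing. Fixing the standard handle decomposition of the genus-$1$ surface $\Sigma_\Gamma$, the piece $\Sigma_\Gamma\times D^2$ contributes a single $0$-handle together with $2g+k-1=k+1$ $1$-handles, where $k$ is the number of boundary components of $\Sigma_\Gamma$ and exactly two of the $1$-handles come from the genus; each letter of the global monodromy $\beta_1\cdots\beta_{c_1-1}W_a\gamma_N\delta_{c_1-1}W_c\gamma_N W_b$ contributes one $2$-handle. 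First I would draw this diagram, recording for every vanishing cycle its position relative to the $1$-handles and its framing, and check at the outset that the fibration is genuinely allowable, so that no vanishing cycle bounds a disk in $\Sigma_\Gamma$.

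Next I would simplify the diagram to the plumbing of $\Gamma$. Away from the centre the analysis is the genus-$0$ reduction of Gay--Mark \cite{MR3100798} applied to the horizontal strip of Figure~\ref{horizontal} and the vertical strip of Figure~\ref{vertical}: for a leg vertex of degree $-c$ the twists about single holes are meridians of, and cancel, the corresponding boundary $1$-handles, while the enclosing twist survives as a $(-c)$-framed unknot, with nested enclosing twists producing a single Hopf clasp between adjacent vertices; thus each leg becomes a linear chain of the correct framings. The essential new feature is the central region of Figure~\ref{genericfiber}, where the central $(-2)$-vertex is produced by the two displayed copies of $\gamma_N$ together with the two $1$-handles coming from the genus: I would slide the genus $1$-handles over the two $\gamma_N$-handles, cancel, and verify that the outcome is a single $(-2)$-framed unknot linked once to the terminal handle of each horizontal leg and, through $\delta_{c_1-1}$, once to the vertical leg $\beta_1\cdots\beta_{c_1-1}\delta_{c_1-1}W_c$. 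Tracking framings through these slides should give self-intersection $-b_v$ at every vertex and exactly one Hopf clasp along every edge, so that the resulting linking matrix is the intersection matrix of $\Gamma$; this realizes the plumbing tree and follows the genus-introducing scheme of \cite{MR4125685}. Conceptually, the genus is forced precisely because a valence-$3$, degree-$(-2)$ central vertex cannot be realized on a planar page.

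For the Chern-class statement I would realize $X_\Gamma$ as a Stein (Weinstein) handlebody by drawing each vanishing cycle as a Legendrian curve on the page whose page framing equals $\mathrm{tb}-1$, and compute the rotation number of each curve from its front projection. By the standard rotation-number formula for Stein handlebodies, $c_1(X_\Gamma)$ evaluated on the homology class of the $2$-handle attached along a vanishing cycle $L$ equals $\mathrm{rot}(L)$. Combining this with the identification of each exceptional class $[E_v]$ as a signed sum of $2$-handle classes obtained in the previous step, I would compute $\langle c_1(X_\Gamma),[E_v]\rangle$ and check that it equals $[E_v]^2+2$ for every vertex (in particular it returns $0$ on the central $(-2)$-sphere); this is exactly the adjunction equality $2g(E_v)-2=[E_v]^2-\langle c_1(X_\Gamma),[E_v]\rangle$. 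Matching these values for all vertices pins down $c_1(X_\Gamma)$ as the canonical class and thereby identifies the induced contact structure on the boundary with the Milnor fillable one.

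The main obstacle is the central computation in the second paragraph: unlike the planar legs, the two $\gamma_N$-handles interact with the genus $1$-handles, so the cancellations and the resulting framings must be tracked with care, and it is here --- rather than in the routine leg-by-leg reduction or the rotation-number bookkeeping --- that the correctness of the construction is decided.
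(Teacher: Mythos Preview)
Your proposal is correct and follows essentially the same route as the paper: translate the PALF into a Kirby diagram (one $0$-handle, $1$-handles for $\Sigma_\Gamma$, and a $(-1)$-framed $2$-handle for each vanishing cycle), reduce the legs by the Gay--Mark slides/cancellations, analyze the central region where the genus $1$-handles interact with the $\gamma_N$, $\beta_1$, and $\delta_{c_1-1}$ curves, and compute $c_1$ via Gompf's rotation-number formula on the vanishing cycles. The paper carries this out by first doing the linear case explicitly (obtaining $[v_i]=\gamma_N-\gamma_{N-a_i+2}-\alpha_{N-a_i+3}-\cdots-\alpha_N$ and noting all rotation numbers are $1$ in the natural page trivialization), then observing that the horizontal and vertical Kirby diagrams embed in that of $X_\Gamma$, with the linking at the centre coming from $\beta_1$, $\delta_{c_1-1}$, and the two $\gamma_N$. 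One small caution: your phrase ``slide the genus $1$-handles over the two $\gamma_N$-handles'' is not literally a Kirby move---you cancel $1$-handles against $2$-handles that pass over them geometrically once (here $\beta_1$ is the relevant curve through the genus feet), and it is the $2$-handles that get slid.
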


\begin{proof}
We first verify that a genus-$0$ PALF $X_L$ of Gay-Mark on a linear plumbing $L$ (see Figure~\ref{linear}) is actually diffeomorphic to the plumbing of spheres. From the Lefschetz fibration structure of $X_L$, we obtain a Kirby diagram as in Figure~\ref{linear}: As the generic fiber of $W$ is $N=(a_1+\cdots+a_i)-2i+1$ holed disk, we have one $0$-handle, $N$ $1$-handles and a $2$-handle for each vanishing cycle as in Figure~\ref{linear}. Note that all the framings of $2$-handles are $-1$ with respect to the blackboard framing.
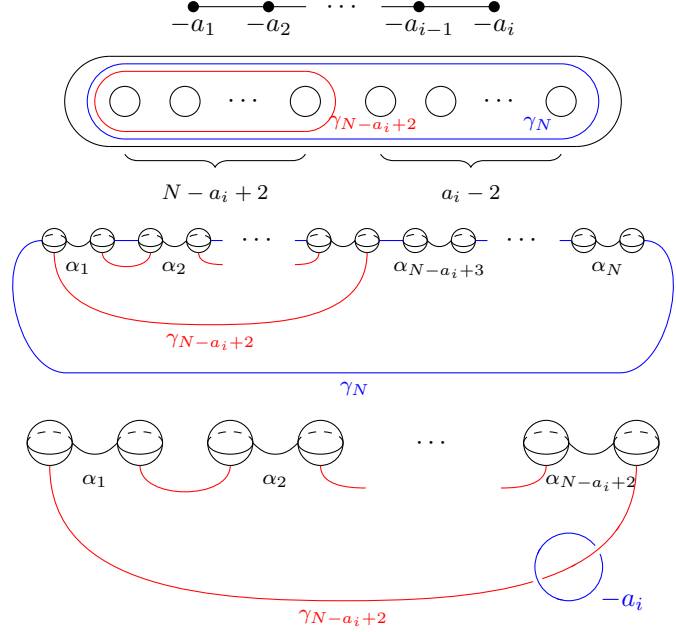
\begin{figure}[h]
 \begin{tikzpicture}[scale=1.]
\draw (0,0) node[below]{$-a_1$};

\draw (1,0) node[below]{$-a_2$};
\draw (3,0) node[below]{$-a_{i-1}$};
\draw (4,0) node[below]{$-a_i$};

\draw (0,0)--(1.5,0) (2.5,0)--(4,0);
\draw (0,0 ) node[circle, fill, inner sep=1.5pt, black]{};
\draw (1,0 ) node[circle, fill, inner sep=1.5pt, black]{};
\draw (2,0) node{$\cdots$};
\draw (3,0 ) node[circle, fill, inner sep=1.5pt, black]{};
\draw (4,0 ) node[circle, fill, inner sep=1.5pt, black]{};
\draw (0,-.5) node{};

\end{tikzpicture}

\begin{tikzpicture}[scale=0.20]
\draw (7,0)--(38,0);
\draw (7,-6)--(38,-6);
\draw (7,0) arc (90:270:3);
\draw (38,0) arc (90:-90:3);
\draw (8,-3) circle (1);
\draw (12,-3) circle (1);

\draw (16,-3) node{$\cdots$};
\draw (20,-3) circle (1);

\draw (25,-3) circle (1);
\draw (29,-3) circle (1);
\draw (33,-3) node{$\cdots$};
\draw (37,-3) circle (1);

\draw[red] (20,-1)--(8,-1);
\draw[red] (20,-5)--(8,-5);
\draw[red] (8,-1) arc (90:270:2);
\draw[red] (20,-1) arc (90:-90:2);
\draw[red] (24.5,-4.5) node {\footnotesize{$\gamma_{N-a_i+2}$}};
\draw[blue] (37,-.5)--(8,-.5);
\draw[blue] (37,-5.5)--(8,-5.5);
\draw[blue] (8,-.5) arc (90:270:2.5);
\draw[blue] (37,-.5) arc (90:-90:2.5);
\draw[blue] (35.5,-4.5) node {\footnotesize{$\gamma_{N}$}};

\draw [decorate,decoration={brace,mirror,amplitude=5pt},xshift=0pt,yshift=0]
	(8,-6.5) -- (20,-6.5) node [black,midway,yshift=-15pt] 
	{\footnotesize $N-a_i+2$};
\draw [decorate,decoration={brace,mirror,amplitude=5pt},xshift=0pt,yshift=0]
	(25,-6.5) -- (37,-6.5) node [black,midway,yshift=-15pt] 
	{\footnotesize $a_i-2$};

\end{tikzpicture}

\begin{tikzpicture}[scale=0.16]
\begin{scope}
\draw (0,1.5) node[]{};
\draw (0,0) circle (1);
\draw (1,0) arc (0:-180:1 and 0.5);
\draw[dashed] (1,0) arc (0:180:1 and 0.5);
\draw (4,0) circle (1);
\draw (5,0) arc (0:-180:1 and 0.5);
\draw[dashed] (5,0) arc (0:180:1 and 0.5);
\draw (1,0) to [out=-45, in=left] (2,-.5) to [out=right,  in=225] (3,0);
\draw (2,-1) node[below]{\footnotesize{$\alpha_1$}};

\draw (8,0) circle (1);
\draw (9,0) arc (0:-180:1 and 0.5);
\draw[dashed] (9,0) arc (0:180:1 and 0.5);
\draw (12,0) circle (1);
\draw (13,0) arc (0:-180:1 and 0.5);
\draw[dashed] (13,0) arc (0:180:1 and 0.5);

\draw (9,0) to [out=-45, in=left] (10,-.5) to [out=right,  in=225] (11,0);
\draw (10,-1) node[below]{\footnotesize{$\alpha_2$}};

\draw (17,0) node[]{$\cdots$};

\draw (22,0) circle (1);
\draw (23,0) arc (0:-180:1 and 0.5);
\draw[dashed] (23,0) arc (0:180:1 and 0.5);
\draw (26,0) circle (1);
\draw (27,0) arc (0:-180:1 and 0.5);
\draw[dashed] (27,0) arc (0:180:1 and 0.5);
\draw (23,0) to [out=-45, in=left] (24,-.5) to [out=right,  in=225] (25,0);

\draw (30,0) circle (1);
\draw (31,0) arc (0:-180:1 and 0.5);
\draw[dashed] (31,0) arc (0:180:1 and 0.5);
\draw (34,0) circle (1);
\draw (35,0) arc (0:-180:1 and 0.5);
\draw[dashed] (35,0) arc (0:180:1 and 0.5);
\draw (31,0) to [out=-45, in=left] (32,-.5) to [out=right,  in=225] (33,0);
\draw (32,-1) node[below]{\footnotesize{$\alpha_{N-a_i+3}$}};

\draw (44,0) circle (1);
\draw (45,0) arc (0:-180:1 and 0.5);
\draw[dashed] (45,0) arc (0:180:1 and 0.5);
\draw (48,0) circle (1);
\draw (49,0) arc (0:-180:1 and 0.5);
\draw[dashed] (49,0) arc (0:180:1 and 0.5);
\draw (45,0) to [out=-45, in=left] (46,-.5) to [out=right,  in=225] (47,0);
\draw (46,-1) node[below]{\footnotesize{$\alpha_{N}$}};

\draw (39,0) node[]{$\cdots$};

\draw[red] (0,-1) to [out=down, in=left] (13,-7) to [out=right, in =down] (26,-1);
\draw[red] (13,-7) node[below]{\footnotesize{$\gamma_{N-a_i+2}$}};
\draw[red] (4,-1) to [out=down, in=down] (8,-1);
\draw[red] (12,-1) to [out=down, in=left] (14,-2);
\draw[red] (20,-2) to [out=right, in=down] (22,-1);

\draw[blue] (-1,0) to [out=left, in=left] (1,-11) to (25,-11) to (47,-11) to [out=right, in=right] (49,0);
\draw[blue] (25,-11) node[below]{\footnotesize{$\gamma_{N}$}};

\draw[blue] (5,0)--(7,0)  (13,0)--(14,0) (42,0)--(43,0);
\draw[blue] (20,0)--(21,0) (27,0)--(29,0) (35,0)--(36,0);

\end{scope}
\begin{scope}[shift={(-5.5,18)},scale=1.3]
\end{scope}

\begin{scope}[shift={(7,24)}, scale=8]

\end{scope}
\end{tikzpicture}

\begin{tikzpicture}[scale=0.3]
\draw (0,1.2) node[]{};
\draw (0,0) circle (1);
\draw (1,0) arc (0:-180:1 and 0.5);
\draw[dashed] (1,0) arc (0:180:1 and 0.5);
\draw (4,0) circle (1);
\draw (5,0) arc (0:-180:1 and 0.5);
\draw[dashed] (5,0) arc (0:180:1 and 0.5);
\draw (1,0) to [out=-45, in=left] (2,-.5) to [out=right,  in=225] (3,0);
\draw (2,-1) node[below]{\footnotesize{$\alpha_1$}};

\draw (8,0) circle (1);
\draw (9,0) arc (0:-180:1 and 0.5);
\draw[dashed] (9,0) arc (0:180:1 and 0.5);
\draw (12,0) circle (1);
\draw (13,0) arc (0:-180:1 and 0.5);
\draw[dashed] (13,0) arc (0:180:1 and 0.5);

\draw (9,0) to [out=-45, in=left] (10,-.5) to [out=right,  in=225] (11,0);
\draw (10,-1) node[below]{\footnotesize{$\alpha_2$}};

\draw (17,0) node[]{$\cdots$};

\draw (22,0) circle (1);
\draw (23,0) arc (0:-180:1 and 0.5);
\draw[dashed] (23,0) arc (0:180:1 and 0.5);
\draw (26,0) circle (1);
\draw (27,0) arc (0:-180:1 and 0.5);
\draw[dashed] (27,0) arc (0:180:1 and 0.5);
\draw (23,0) to [out=-45, in=left] (24,-.5) to [out=right,  in=225] (25,0);
\draw (24,-1) node[below]{\footnotesize{$\alpha_{N-a_i+2}$}};

\begin{knot}
\strand[red] (0,-1) to [out=down, in=left] (13,-7) to [out=right, in =down] (26,-1);
\strand[blue] (23,-5.5) circle (1.5);
\flipcrossings{2}
\draw[blue] (24.,-7) node[right]{$-a_i$};
\draw[red] (13,-7) node[below]{\footnotesize{$\gamma_{N-a_i+2}$}};
\draw[red] (4,-1) to [out=down, in=down] (8,-1);
\draw[red] (12,-1) to [out=down, in=left] (14,-2);
\draw[red] (20,-2) to [out=right, in=down] (22,-1);
\end{knot}

\end{tikzpicture}
\caption{Linear plumbing of $L$ and part of Kirby diagram of $W$}
\label{linear}
\end{figure}
First, we slide a $2$-handle corresponding to $\gamma_N$ over $2$-handles corresponding to $\alpha_n$ with $n=N-a_i+3, \dots, N$, and $\gamma_{N-a_i+2}$ to unlink from the $1$-handles. 
After cancelling $1$-handles with $\alpha_n$ $(n=N-a_i+3,\dots,N)$ $2$-handles, we obtain the last diagram in Figure~\ref{linear}. A $2$-handle represented by unknot in the last diagram in Figure~\ref{linear} corresponds to degree $-a_i$ vertex $v_i$ in $L$. Thus,  the homology class of $v_i$ can be represented by $\gamma_N-\gamma_{N-a_i+2}-\alpha_{N-a_i+3}-\cdots-\alpha_N$. We also verify that the first Chern class $c_1(X_L)$ satisfies the adjunction equality on $v_i$: The first Chern class $c_1(X_L)$ is represented by a co-cycle whose value on the $2$-handle corresponding to a vanishing cycle is the rotation number of the vanishing cycle~\cite{MR1668563}. And if we fix a trivialization of the tangent bundle of fiber as a natural extension of a trivialization of the tangent bundle of $\mathbb{R}^2$, the rotation numbers of all vanishing cycles of $X_L$ are $1$. Then a simple computation shows that the adjunction equality is satisfied on $v_i$. The aforementioned process can be repeated until a Kirby diagram of the plumbing of $2$-spheres according to $L$ is obtained. 

Subsequently, we verify that the total space of $X_\Gamma$ is diffeomorphic to the plumbing of $2$-spheres according to $\Gamma$. From the Lefschetz fibration structure of $X_\Gamma$, we get a Kirby diagram of $X_\Gamma$ as in Figure~\ref{minimalresolution}. The white and gray $1$-handles correspond to the horizontal and vertical parts of $\Gamma$, respectively, and all the framings of $2$-handles are $-1$ with respect to the blackboard framing. As the Kirby diagrams for the horizontal and vertical parts are embedded in the diagram for $X_\Gamma$, the plumbing of unknots is obtained with respect to the horizontal and vertical parts by sliding and canceling handles as described previously. 
 The linking of horizontal and vertical parts is derived from linkings between $2$-handles corresponding to $\beta_1$, $\delta_{c_1-1}$ and two $\gamma_N$. The homology class of each vertex $v$ is represented by same vanishing cycles derived from linear plumbings. Thus, the first Chern class of $X_\Gamma$ satisfies adjunction equality on $v$.
\begin{figure}[h]
\begin{tikzpicture}[scale=0.18]
\draw (0,0) circle (1);
\draw (1,0) arc (0:-180:1 and 0.5);
\draw[dashed] (1,0) arc (0:180:1 and 0.5);
\draw (4,0) circle (1);
\draw (5,0) arc (0:-180:1 and 0.5);
\draw[dashed] (5,0) arc (0:180:1 and 0.5);
\draw (1,0) to [out=-45, in=left] (2,-.5) to [out=right,  in=225] (3,0);
\draw (2,-.5) node[below]{\footnotesize{$\alpha_1$}};

\draw (8,0) circle (1);
\draw (9,0) arc (0:-180:1 and 0.5);
\draw[dashed] (9,0) arc (0:180:1 and 0.5);
\draw (12,0) circle (1);
\draw (13,0) arc (0:-180:1 and 0.5);
\draw[dashed] (13,0) arc (0:180:1 and 0.5);

\draw (9,0) to [out=-45, in=left] (10,-.5) to [out=right,  in=225] (11,0);
\draw (10,-.5) node[below]{\footnotesize{$\alpha_2$}};

\draw (15,0) node[]{$\cdots$};

\draw[blue, thick] (5,0) to [out=-20, in=200] (7,0);
\draw[blue, thick] (13,0) to [out=-40, in=left] (14,-.5);
\draw[blue, thick] (16,-.5) to [out=right, in=220] (17,0);

\draw (18,0) circle (1);
\draw (19,0) arc (0:-180:1 and 0.5);
\draw[dashed] (19,0) arc (0:180:1 and 0.5);
\draw (26,0) circle (1);
\draw (27,0) arc (0:-180:1 and 0.5);
\draw[dashed] (27,0) arc (0:180:1 and 0.5);

\draw[fill=gray!50] (22,0) circle (1);
\draw (23,0) arc (0:-180:1 and 0.5);
\draw[dashed] (23,0) arc (0:180:1 and 0.5);

\draw[fill=gray!50] (28,3) circle (1);
\draw (29,3) arc (0:-180:1 and 0.5);
\draw[dashed] (29,3) arc (0:180:1 and 0.5);

\draw[olive] (28,4)--(28,6);

\draw[fill=gray!50] (28,7) circle (1);
\draw (29,7) arc (0:-180:1 and 0.5);
\draw[dashed] (29,7) arc (0:180:1 and 0.5);

\draw (28, 10.5) node[]{$\vdots$};

\draw[fill=gray!50] (28,13) circle (1);
\draw (29,13) arc (0:-180:1 and 0.5);
\draw[dashed] (29,13) arc (0:180:1 and 0.5);

\draw[olive] (28,14)--(28,16);

\draw[fill=gray!50] (28,17) circle (1);
\draw (29,17) arc (0:-180:1 and 0.5);
\draw[dashed] (29,17) arc (0:180:1 and 0.5);

\draw (28,20.5) node[]{$\vdots$};

\draw[fill=gray!50] (28,23) circle (1);
\draw (29,23) arc (0:-180:1 and 0.5);
\draw[dashed] (29,23) arc (0:180:1 and 0.5);

\draw[olive] (28,24)--(28,26);

\draw[fill=gray!50] (28,27) circle (1);
\draw (29,27) arc (0:-180:1 and 0.5);
\draw[dashed] (29,27) arc (0:180:1 and 0.5);

\draw[fill=gray!50] (34,0) circle (1);
\draw (35,0) arc (0:-180:1 and 0.5);
\draw[dashed] (35,0) arc (0:180:1 and 0.5);

\begin{knot}[
	clip width=5,
	clip radius = 2pt,
	end tolerance = 1pt,
]
\strand[olive] (23,0) to [out=-90, in=270] (33,0);
\strand (19,0) to [out=down, in=down] (25,0);

\strand[blue] (0,-1) to [out=-40, in=left] (13,-5) to [out=right, in=220](26,-1);

\strand[red] (21,0) to [out=down, in=left] (28,-6) to [out=right, in=down] (36,0) to [out=up,in=-20] (29,17);
\strand (31,0) to [out=down, in=down] (37,0);

\strand[red] (35,0) to [out=up, in=0] (29,3);
\strand[red] (29,7) to [out=0, in=-90] (30,9);
\strand[red] (29,13) to [out=0, in=90] (30,11);
\strand[blue, thick] (-1,0) to [out=-90, in=-90] (27,0);

\end{knot}

\draw[blue,thick] (13,-3) node[below]{\textbf{\footnotesize{$\gamma_N$}}};

\draw[blue,thick] (13,-6) node[below]{\textbf{\footnotesize{$\gamma_N$}}};

\draw[olive] (28,-3) node[below]{\footnotesize{$\beta_{1}$}};
\draw[olive] (28,5) node[left]{\footnotesize{$\beta_{2}$}};
\draw[olive] (28,15) node[left]{\footnotesize{$\beta_{c_1-1}$}};

\draw[red] (28,-6) node[below]{\footnotesize{$\delta_{c_1-1}$}};

\draw (30,0) circle (1);
\draw (31,0) arc (0:-180:1 and 0.5);
\draw[dashed] (31,0) arc (0:180:1 and 0.5);
\draw (38,0) circle (1);
\draw (39,0) arc (0:-180:1 and 0.5);
\draw[dashed] (39,0) arc (0:180:1 and 0.5);

\draw (44,0) circle (1);
\draw (45,0) arc (0:-180:1 and 0.5);
\draw[dashed] (45,0) arc (0:180:1 and 0.5);
\draw (48,0) circle (1);
\draw (49,0) arc (0:-180:1 and 0.5);
\draw[dashed] (49,0) arc (0:180:1 and 0.5);
\draw (45,0) to [out=-45, in=left] (46,-.5) to [out=right,  in=225] (47,0);

\draw (41,0) node[]{$\cdots$};


\draw[blue] (4,-1) to [out=down, in=down] (8,-1);
\draw[blue] (12,-1) to [out=down, in=left] (14,-2);
\draw[blue] (16,-2) to [out=right, in=down] (18,-1);

\end{tikzpicture}

\begin{tikzpicture}[scale=0.25]
\draw (0,3) node []{};
\draw (0,0) circle (1);
\draw (1,0) arc (0:-180:1 and 0.5);
\draw[dashed] (1,0) arc (0:180:1 and 0.5);
\draw (4,0) circle (1);
\draw (5,0) arc (0:-180:1 and 0.5);
\draw[dashed] (5,0) arc (0:180:1 and 0.5);
\draw (1,0) to [out=-45, in=left] (2,-.5) to [out=right,  in=225] (3,0);
\draw (2,-.5) node[below]{\footnotesize{$-1$}};

\draw (8,0) circle (1);
\draw (9,0) arc (0:-180:1 and 0.5);
\draw[dashed] (9,0) arc (0:180:1 and 0.5);
\draw (12,0) circle (1);
\draw (13,0) arc (0:-180:1 and 0.5);
\draw[dashed] (13,0) arc (0:180:1 and 0.5);

\draw (9,0) to [out=-45, in=left] (10,-.5) to [out=right,  in=225] (11,0);
\draw (10,-.5) node[below]{\footnotesize{$-1$}};

\draw (15,0) node[]{$\cdots$};

\draw[blue, thick] (5,0) to [out=-20, in=200] (7,0);
\draw[blue, thick] (13,0) to [out=-40, in=left] (14,-.5);
\draw[blue, thick] (16,-.5) to [out=right, in=220] (17,0);

\draw (18,0) circle (1);
\draw (19,0) arc (0:-180:1 and 0.5);
\draw[dashed] (19,0) arc (0:180:1 and 0.5);
\draw (26,0) circle (1);
\draw (27,0) arc (0:-180:1 and 0.5);
\draw[dashed] (27,0) arc (0:180:1 and 0.5);

\draw[fill=gray!50] (22,0) circle (1);
\draw (23,0) arc (0:-180:1 and 0.5);
\draw[dashed] (23,0) arc (0:180:1 and 0.5);

\draw[fill=gray!50] (34,0) circle (1);
\draw (35,0) arc (0:-180:1 and 0.5);
\draw[dashed] (35,0) arc (0:180:1 and 0.5);

\begin{knot}[
	clip width=5,
	clip radius = 2pt,
	end tolerance = 1pt,
]
\strand[olive] (23,0) to [out=-90, in=270] (33,0);
\strand (19,0) to [out=down, in=down] (25,0);

\strand[blue] (0,-1) to [out=-40, in=left] (13,-5) to [out=right, in=220](26,-1);

\strand[red] (21,0) to [out=down, in=left] (28,-6) to [out=right, in=down] (35,0);

\strand[blue, thick] (-1,0) to [out=-90, in=-90] (27,0);
\strand[red] (34,-4.5) circle (1.5);
\strand[red] (36.5,-3) arc (90:270:1.5);
\strand[red] (39.5,-3) arc (90:-90:1.5);
\strand[red] (42,-4.5) circle (1.5);

\strand[blue] (18.5,-8.5) circle (1.5);
\strand[blue] (20.,-11) arc (0:180:1.5);
\strand[blue] (20.,-14) arc (0:-180:1.5);
\strand[blue] (18.5,-16.5) circle (1.5);

\flipcrossings{10,15,18,8,12,14}
\end{knot}

\draw[blue,thick] (13,-3) node[below]{\textbf{\footnotesize{$-1$}}};

\draw[blue,thick] (13,-6) node[below]{\textbf{\footnotesize{$-1$}}};

\draw[blue] (20,-8.5) node[right]{\footnotesize{${-b_1}$}};
\draw[blue] (20,-16.5) node[right]{\footnotesize{${-b_j}$}};

\draw[blue] (18.5,-12.5) node[]{$\vdots$};

\draw[olive] (28,-3) node[below]{\footnotesize{$-1$}};

\draw[red] (28,-6) node[below]{\footnotesize{${-c_1+1}$}};

\draw[red] (34,-6) node[below]{\footnotesize{${-c_2}$}};
\draw[red] (42,-6) node[below]{\footnotesize{${-c_k}$}};

\draw[red] (38,-4.5) node[]{$\cdots$};


\draw[blue] (4,-1) to [out=down, in=down] (8,-1);
\draw[blue] (12,-1) to [out=down, in=left] (14,-2);
\draw[blue] (16,-2) to [out=right, in=down] (18,-1);

\end{tikzpicture}
\caption{Part of Kirby diagram of $X_\Gamma$}
\label{minimalresolution}
\end{figure}
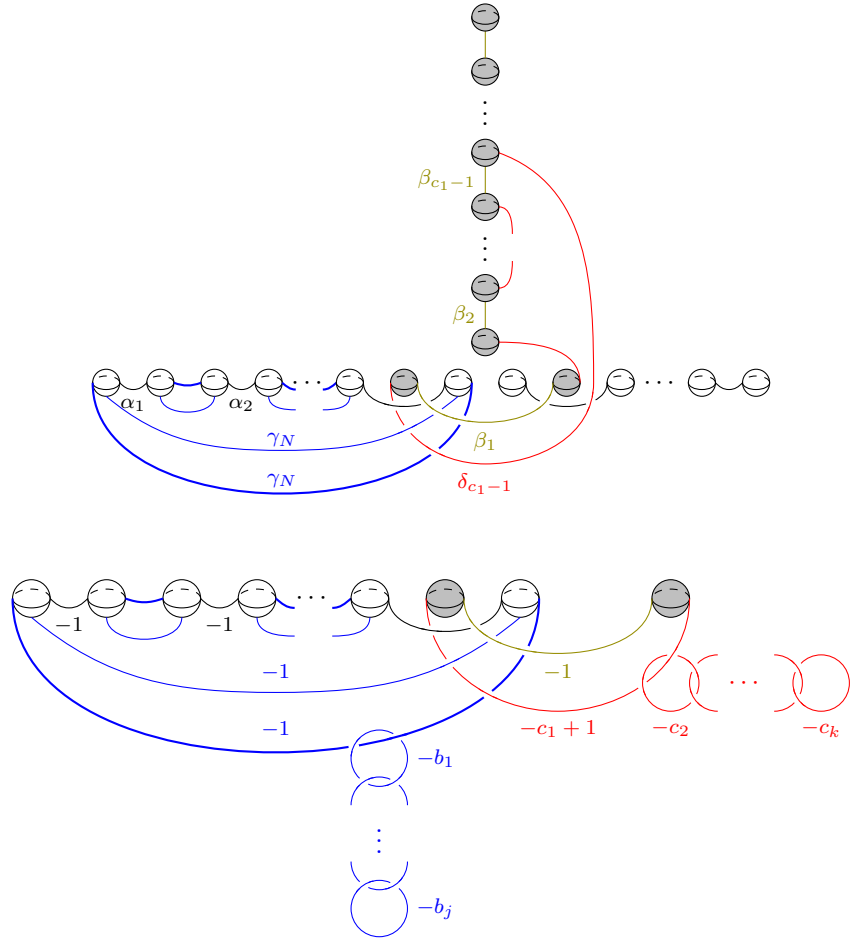
\end{proof}

If $\Gamma$ is one of the resolution graphs with bad central vertices in Figure~\ref{resolutiongraphs}, then the PALF $X_\Gamma$ induces the Milnor fillable contact structure on the boundary: Let $Y$ be the $3$-manifold diffeomorphic to the boundary of $X_\Gamma$. Then $Y$ is a small Seifert $3$-manifold $Y(-2; (\alpha_1, \beta_1), (\alpha_2, \beta_2), (\alpha_3, \beta_3))$. If $Y$ is an $L$-space, there is a classification of tight contact structures on $Y$ given by Ghiggini ( Theorem 1.3 in ~\cite{MR2443233}): A tight contact structure $\xi$ of $L$ is determined by $Spin^c$ structure $t_{\xi}$ induced by $\xi$ and filled by the Stein manifolds described via Legendrian surgery on all possible Legendrian realizations of the link corresponding to $\Gamma$. In particular, the Milnor fillable contact structure is filled by the Stein manifold whose first Chern class satisfies the adjunction equality on each vertex in $\Gamma$. 
On the other hand, the singularity corresponding to $\Gamma$ is a rational singularity, because it admits $\mathbb{Q}$HD smoothing. Therefore, the link of the singularity, which is diffeomorphic to the boundary of $X_\Gamma$, is $L$-space. Hence, by the theorem of Ghiggini, the PALF $X_\Gamma$ we constructed induces the Milnor fillable contact structure on the boundary of $X_\Gamma$. 
\begin{remark}
Using a technique similar to that described in  Proposition~\ref{prop1}, we can construct a genus-1 PALF structure on a $4$-legged resolution graph with a central $-3$ vertex whose first Chern class satisfies the adjunction equality on each vertex. But it is unknown whether the induced contact structure of the PALF is the Milnor fillable due to the lack of a classification of tight contact structures. 
\end{remark}

We now give an explicit relation $W_{\Gamma}=W_{\Gamma}'$ between two words of right-handed Dehn twists on simple closed curves in genus-$1$ surface $\Sigma_{\Gamma}$ where $W_{\Gamma}$ is the global monodromy of $X_\Gamma$ while the PALF $Y_{\Gamma}$ with the global monodromy $W_{\Gamma}'$ is a rational homology ball filling. Because of the homology condition on $Y_{\Gamma}$, the length of $W_{\Gamma}'$ must be equal to $b_1(\Sigma_{\Gamma})$. Conversely, if the length of $W_{\Gamma}'$ is equal to $b_1(\Sigma_{\Gamma})$, then $Y_{\Gamma}$ is a rational homology ball since the boundary of $X_\Gamma$ is rational homology $3$-sphere. We denote the right-handed Dehn twist on a curve $\alpha$ by $\alpha$ and also by $t_\alpha$ and use functional notation for the products of Dehn twists.
\subsection{Relations for (d) and (f) family}
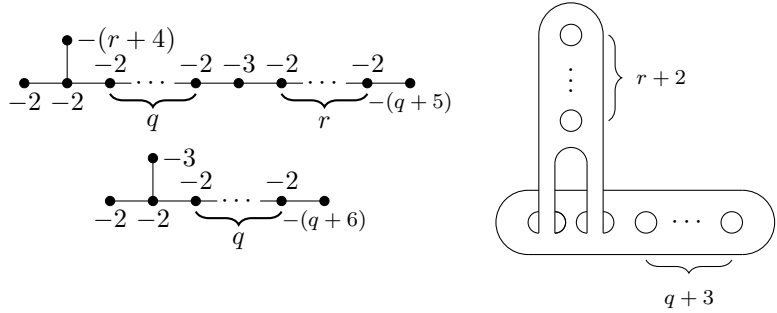
\begin{figure}[h]
\begin{tikzpicture}[scale=0.57]
\begin{scope}
\node[bullet] at (0,0){};
\node[bullet] at (1,0){};
\node[bullet] at (1,1){};

\node[bullet] at (2,0){};
\node[bullet] at (4,0){};
\node[bullet] at (5,0){};
\node[bullet] at (6,0){};
\node[bullet] at (8,0){};
\node[bullet] at (9,0){};

\node[below] at (-0,0){$-2$};
\node[below] at (1,0){$-2$};
\node[above] at (2,0){$-2$};
\node[above] at (4,0){$-2$};
\node[above] at (5,0){$-3$};
\node[above] at (6,0){$-2$};
\node[above] at (8,0){$-2$};

\node[below] at (9,0){\footnotesize{$-(q+5)$}};

\node[right] at (1,1){$-(r+4)$};

\node at (3,0){$\cdots$};
\node at (7,0){$\cdots$};

\draw (0,0)--(2.5,0);
\draw (3.5,0)--(6.5,0);
\draw (7.5,0)--(9,0);
\draw (1,0)--(1,1);
	\draw [thick,decorate,decoration={brace,mirror,amplitude=5pt},xshift=0pt,yshift=-7pt]
	(2,0) -- (4,0) node [black,midway,yshift=-11pt] 
	{$q$};
	\draw [thick,decorate,decoration={brace,mirror,amplitude=5pt},xshift=0pt,yshift=-7pt]
	(6,0) -- (8,0) node [black,midway,yshift=-11pt] 
	{$r$};

\end{scope}

\begin{scope}[shift={(2,-2.75)}]
\node[bullet] at (0,0){};
\node[bullet] at (1,0){};
\node[bullet] at (1,1){};

\node[bullet] at (2,0){};
\node[bullet] at (4,0){};
\node[bullet] at (5,0){};

\node[below] at (-0,0){$-2$};
\node[below] at (1,0){$-2$};
\node[above] at (2,0){$-2$};
\node[above] at (4,0){$-2$};

\node[below] at (5,0){\footnotesize{$-(q+6)$}};

\node[right] at (1,1){$-3$};

\node at (3,0){$\cdots$};

\draw (0,0)--(2.5,0);
\draw (3.5,0)--(5,0);

\draw (1,0)--(1,1);
	\draw [thick,decorate,decoration={brace,mirror,amplitude=5pt},xshift=0pt,yshift=-7pt]
	(2,0) -- (4,0) node [black,midway,yshift=-11pt] 
	{$q$};

\end{scope}

\begin{scope}[shift={(7.,-2.5)}, scale=0.25]
\draw (19,0)--(20,0) (21.5,0)--(24.5,0) (26,0)--(39,0);
\draw (19,-6)--(39,-6);
\draw (19,0) arc (90:270:3);
\draw (39,0) arc (90:-90:3);

\draw (20,-2) arc (90:270:1);
\draw (21.5,-2) arc (90:-90:1);

\draw (21.5,-2) arc (90:-90:1);
\draw (20,-4)--(20,14.5);
\draw (21.5,-4)--(21.5,2.5);

\draw (21.5,2.5) arc (180:0:1.5);
\draw (20,14.5) arc (180:0:3);

\draw (23, 6.5) circle (1);

\draw (23,11.) node{$\vdots$};

\draw (23, 14.5) circle (1);

\draw (23,18) node[]{};
\draw (24+2,-2) arc (90:-90:1);
\draw (24.5,-2) arc (90:270:1);

\draw (26,-4)--(26,14.5);
\draw (24.5,-4)--(24.5,2.5);
\draw [decorate,decoration={brace,mirror,amplitude=5pt},xshift=0pt,yshift=0]
	(26.5,6.5) -- (26.5,14.5) node [black,midway,xshift=20pt,align=center] 
	{\footnotesize $r+2$};


\draw (30,-3) circle (1);
\draw (34,-3) node{$\cdots$};
\draw (38,-3) circle (1);

\draw [decorate,decoration={brace,mirror,amplitude=5pt},xshift=0pt,yshift=0]
	(30,-6.5) -- (38,-6.5) node [black,midway,yshift=-15pt] 
	{\footnotesize $q+3$};
\end{scope}
\end{tikzpicture}

\caption{Resolution graph $\Gamma_{q,r}$ and generic fiber $\Sigma_{\Gamma_{q,r}}$ for (d) and (f) family}
\label{df}
\end{figure}
Let $\Gamma_{q,r}$(with $r\geq0$) be a resolution graph of (d) family and $\Gamma_{q,-1}$ be a resolution graph of (f) family as in Figure~\ref{df}. Then the generic fiber for $X_{\Gamma_{q,r}}$ is $\Sigma_{\Gamma_{q,r}}$ as in Figure~\ref{df} and the global monodromy  of $X_{\Gamma_{q,r}}$ is given by
\begin{eqnarray*}
&\phantom{0}& \hspace{-2 em} 
\beta_1
\cdots
\beta_{r+3}
\alpha_1^2
\delta_{r+3}
\alpha_1^{q+1}
\alpha_2
\gamma_{2}^{r+1}
\alpha_3
\cdots
\alpha_{q+5}
\gamma_{q+5}
\end{eqnarray*}
We introduce a cancelling pair $\delta_{r+3}^{-1}\cdot\delta_{r+3}$ and rearrange the word using Hurwitz moves.
\begin{eqnarray*}
&\phantom{0}& \hspace{-2 em} 
\beta_1
\cdots
\beta_{r+3}
\alpha_1^2
\delta_{r+3}
\alpha_1^{q+1}
\alpha_2
\gamma_{2}^{r+1}
\alpha_3
\cdots
\alpha_{q+5}
\gamma_{q+5}
\\
&=&
\beta_1
\cdots
\beta_{r+3}
\cdot
\textcolor{red}{
\delta_{r+3}^{-1}
\cdot
\delta_{r+3}
}
\cdot
\alpha_1^2
\delta_{r+3}
\alpha_1^{q+1}
\alpha_2
\gamma_{2}^{r+1}
\alpha_3
\cdots
\alpha_{q+5}
\gamma_{q+5}
\\
&=&
\beta_1
\cdots
\beta_{r+3}
\gamma_{2}^{r+1}
\cdot
\delta_{r+3}^{-1}
\cdot
\alpha_1^{q+3}
\alpha_2
\alpha_3
\cdots
\alpha_{q+5}
\gamma_{q+5}
\\
&&
\cdot
(t_{\alpha_{1}}^{-(q+3)}\cdot t_{\alpha_2}^{-1})(\delta_{r+3})
\cdot
(t_{\alpha_{1}}^{-(q+1)}\cdot t_{\alpha_2}^{-1})(\delta_{r+3})
\end{eqnarray*}
Let $c_{r}=t_{\alpha_2}^{-1}(\delta_{r+3})$. Then $c_r$ and $\alpha_1$ intersect geometrically once. Because of the braid relation $c_r\cdot\alpha_1\cdot c_r=\alpha_{1}\cdot c_r \cdot\alpha_{1}$, we have
\begin{eqnarray*}
&\phantom{0}& \hspace{-2 em} 
(t_{\alpha_{1}}^{-(q+3)}\cdot t_{\alpha_2}^{-1})(\delta_{r+3})
\cdot
(t_{\alpha_{1}}^{-(q+1)}\cdot t_{\alpha_2}^{-1})(\delta_{r+3})
\\
&=&
\alpha_1^{-(q+3)}
\cdot
c_{r}
\cdot
\alpha_1^{2}
\cdot
c_{r}
\cdot
\alpha_1^{(q+1)}
\\
&=&
\alpha_1^{-(q+3)}
\cdot
c_{r}
\cdot
\alpha_1
\cdot
c_{r}
\cdot
\alpha_1
\cdot
c_{r}
\cdot
\alpha_1^{q}
\\
&=&
\alpha_1^{-(q+2)}
\cdot
c_{r}
\cdot
\alpha_1^{2}
\cdot
c_{r}
\cdot
\alpha_1^{q}
\\
&&
\phantom{000000}
\vdots
\\
&=&
\alpha_1^{-2}
\cdot
c_{r}
\cdot
\alpha_1^{2}
\cdot
c_{r}
\\
&=&
\alpha_1^{-2}
\cdot
c_{r}
\cdot
\alpha_1^{2}
\cdot
c_{r}
\cdot
\alpha_1
\cdot
\alpha_1^{-1}
\cdot
\\
&=&
\alpha_1^{-2}
\cdot
c_{r}
\cdot
\alpha_1
\cdot
c_{r}
\cdot
\alpha_1
\cdot
c_{r}
\cdot
\alpha_1^{-1}
\\
&=&
\alpha_1^{-1}
\cdot
c_{r}
\cdot
\alpha_1
\cdot
\alpha_1
\cdot
c_{r}
\cdot
\alpha_1^{-1}
\\
&=&
(t_{\alpha_1}^{-1})(c_r)
\cdot
t_{\alpha_1}(c_r)
\end{eqnarray*}
On the other hand, we have a daisy relation of the form
$$\alpha_1^{q+3}\alpha_2\cdots\alpha_{q+5}\gamma_{q+5}=y_1y_2\cdots y_{q+5}$$
with $y_1=\gamma_2$ as in Figure~\ref{df1} to Figure~\ref{df2}.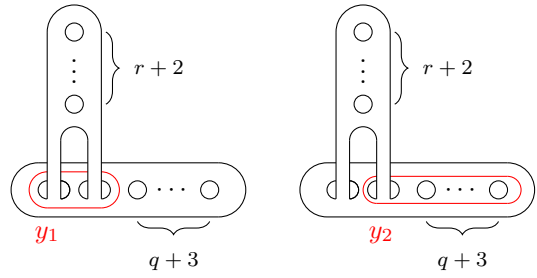
\begin{figure}[h]
\begin{tikzpicture}[scale=0.12]
\begin{scope}
\draw (19,0)--(20,0) (21.5,0)--(24.5,0) (26,0)--(39,0);
\draw (19,-6)--(39,-6);
\draw (19,0) arc (90:270:3);
\draw (39,0) arc (90:-90:3);

\draw[red] (20,-1) arc (90:270:2); 
\draw[red] (26,-1) arc (90:-90:2);
\draw[red] (21.5,-1)--(24.5,-1);
\draw[red] (20,-5)--(26,-5);
\draw[red] (20,-6) node[below]{$y_1$};

\draw (20,-2) arc (90:270:1);
\draw (21.5,-2) arc (90:-90:1);

\draw (21.5,-2) arc (90:-90:1);
\draw (20,-4)--(20,14.5);
\draw (21.5,-4)--(21.5,2.5);

\draw (21.5,2.5) arc (180:0:1.5);
\draw (20,14.5) arc (180:0:3);

\draw (23, 6.5) circle (1);

\draw (23,11.) node{$\vdots$};

\draw (23, 14.5) circle (1);

\draw (23,18) node[]{};
\draw (24+2,-2) arc (90:-90:1);
\draw (24.5,-2) arc (90:270:1);

\draw (26,-4)--(26,14.5);
\draw (24.5,-4)--(24.5,2.5);
\draw [decorate,decoration={brace,mirror,amplitude=5pt},xshift=0pt,yshift=0]
	(26.5,6.5) -- (26.5,14.5) node [black,midway,xshift=20pt,align=center] 
	{\footnotesize $r+2$};


\draw (30,-3) circle (1);
\draw (34,-3) node{$\cdots$};
\draw (38,-3) circle (1);

\draw [decorate,decoration={brace,mirror,amplitude=5pt},xshift=0pt,yshift=0]
	(30,-6.5) -- (38,-6.5) node [black,midway,yshift=-15pt] 
	{\footnotesize $q+3$};
\end{scope}

\begin{scope}[shift={(32,0)}]
\draw (19,0)--(20,0) (21.5,0)--(24.5,0) (26,0)--(39,0);
\draw (19,-6)--(39,-6);
\draw (19,0) arc (90:270:3);
\draw (39,0) arc (90:-90:3);

\draw (20,-2) arc (90:270:1);
\draw (21.5,-2) arc (90:-90:1);

\draw (21.5,-2) arc (90:-90:1);
\draw (20,-4)--(20,14.5);
\draw (21.5,-4)--(21.5,2.5);

\draw (21.5,2.5) arc (180:0:1.5);
\draw (20,14.5) arc (180:0:3);

\draw[red] (24.5,-1.5) arc (90:270:1.5); 
\draw[red] (39,-1.5) arc (90:-90:1.5);
\draw[red] (26,-1.5)--(39,-1.5);
\draw[red] (24.5,-4.5)--(39,-4.5);
\draw[red] (25,-6) node[below]{$y_2$};

\draw (23, 6.5) circle (1);

\draw (23,11.) node{$\vdots$};

\draw (23, 14.5) circle (1);

\draw (23,18) node[]{};
\draw (24+2,-2) arc (90:-90:1);
\draw (24.5,-2) arc (90:270:1);

\draw (26,-4)--(26,14.5);
\draw (24.5,-4)--(24.5,2.5);
\draw [decorate,decoration={brace,mirror,amplitude=5pt},xshift=0pt,yshift=0]
	(26.5,6.5) -- (26.5,14.5) node [black,midway,xshift=20pt,align=center] 
	{\footnotesize $r+2$};


\draw (30,-3) circle (1);
\draw (34,-3) node{$\cdots$};
\draw (38,-3) circle (1);

\draw [decorate,decoration={brace,mirror,amplitude=5pt},xshift=0pt,yshift=0]
	(30,-6.5) -- (38,-6.5) node [black,midway,yshift=-15pt] 
	{\footnotesize $q+3$};
\end{scope}
\end{tikzpicture}
\caption{$y_1$ and $y_2$}
\label{df1}
\end{figure}
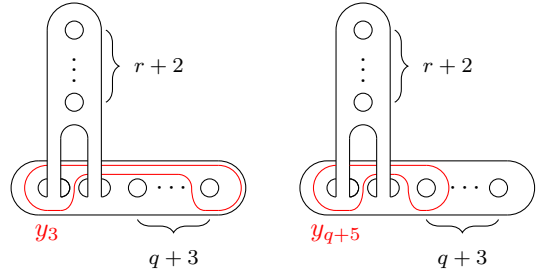
\begin{figure}[h]
\begin{tikzpicture}[scale=0.12]
\begin{scope}
\draw (19,0)--(20,0) (21.5,0)--(24.5,0) (26,0)--(39,0);
\draw (19,-6)--(39,-6);
\draw (19,0) arc (90:270:3);
\draw (39,0) arc (90:-90:3);

\draw[red] (20,-.5) arc (90:270:2.5);

\draw[red] (39,-.5) arc (90:-90:2.5);
\draw[red] (39,-5.5)--(37.5,-5.5);
\draw[red] (37.5,-5.5) to [out=left, in=down] (36,-3) to [out=up,in=right] (34.5,-1.5);
\draw[red] (26,-1.5)-- (34.5,-1.5);

\draw[red] (21.5,-.5)--(24.5,-.5);
\draw[red] (26,-.5)--(39,-.5);

\draw[red] (20,-5.5)--(21.5,-5.5);
\draw[red] (21.5,-5.5) to [out=right, in=down] (23,-3) to [out=up,in=left] (24.5,-1.5);

\draw[red] (20,-6) node[below]{$y_3$};

\draw (20,-2) arc (90:270:1);
\draw (21.5,-2) arc (90:-90:1);

\draw (21.5,-2) arc (90:-90:1);
\draw (20,-4)--(20,14.5);
\draw (21.5,-4)--(21.5,2.5);

\draw (21.5,2.5) arc (180:0:1.5);
\draw (20,14.5) arc (180:0:3);

\draw (23, 6.5) circle (1);

\draw (23,11.) node{$\vdots$};

\draw (23, 14.5) circle (1);

\draw (23,18) node[]{};
\draw (24+2,-2) arc (90:-90:1);
\draw (24.5,-2) arc (90:270:1);

\draw (26,-4)--(26,14.5);
\draw (24.5,-4)--(24.5,2.5);
\draw [decorate,decoration={brace,mirror,amplitude=5pt},xshift=0pt,yshift=0]
	(26.5,6.5) -- (26.5,14.5) node [black,midway,xshift=20pt,align=center] 
	{\footnotesize $r+2$};


\draw (30,-3) circle (1);
\draw (34,-3) node{$\cdots$};
\draw (38,-3) circle (1);

\draw [decorate,decoration={brace,mirror,amplitude=5pt},xshift=0pt,yshift=0]
	(30,-6.5) -- (38,-6.5) node [black,midway,yshift=-15pt] 
	{\footnotesize $q+3$};
\end{scope}

\begin{scope}[shift={(32,0)}]
\draw (19,0)--(20,0) (21.5,0)--(24.5,0) (26,0)--(39,0);
\draw (19,-6)--(39,-6);
\draw (19,0) arc (90:270:3);
\draw (39,0) arc (90:-90:3);

\draw (20,-2) arc (90:270:1);
\draw (21.5,-2) arc (90:-90:1);

\draw (21.5,-2) arc (90:-90:1);
\draw (20,-4)--(20,14.5);
\draw (21.5,-4)--(21.5,2.5);

\draw (21.5,2.5) arc (180:0:1.5);
\draw (20,14.5) arc (180:0:3);

\draw[red] (20,-.5) arc (90:270:2.5);

\draw[red] (30,-.5) arc (90:-90:2.5);

\draw[red] (30,-5.5)--(29.5,-5.5) to [out=left, in=down] (28,-3) to [out=up,in=right] (26.5,-1.5);

\draw[red] (21.5,-.5)--(24.5,-.5);
\draw[red] (26,-.5)--(30,-.5);
\draw[red] (26,-1.5)--(26.5,-1.5);

\draw[red] (20,-5.5)--(21.5,-5.5);
\draw[red] (21.5,-5.5) to [out=right, in=down] (23,-3) to [out=up,in=left] (24.5,-1.5);

\draw[red] (20,-6) node[below]{$y_{q+5}$};

\draw (23, 6.5) circle (1);

\draw (23,11.) node{$\vdots$};

\draw (23, 14.5) circle (1);

\draw (23,18) node[]{};
\draw (24+2,-2) arc (90:-90:1);
\draw (24.5,-2) arc (90:270:1);

\draw (26,-4)--(26,14.5);
\draw (24.5,-4)--(24.5,2.5);
\draw [decorate,decoration={brace,mirror,amplitude=5pt},xshift=0pt,yshift=0]
	(26.5,6.5) -- (26.5,14.5) node [black,midway,xshift=20pt,align=center] 
	{\footnotesize $r+2$};


\draw (30,-3) circle (1);
\draw (34.5,-3) node{$\cdots$};
\draw (38,-3) circle (1);

\draw [decorate,decoration={brace,mirror,amplitude=5pt},xshift=0pt,yshift=0]
	(30,-6.5) -- (38,-6.5) node [black,midway,yshift=-15pt] 
	{\footnotesize $q+3$};
\end{scope}
\end{tikzpicture}
\caption{$y_i$ for $i=3,\dots,q+5$}
\label{df2}
\end{figure}Hence after a daisy substitution and Hurwitz moves, we have
\begin{eqnarray*}
&&
\beta_1
\cdots
\beta_{r+3}
\gamma_{2}^{r+1}
\cdot
\delta_{r+3}^{-1}
\cdot
y_1
y_2
\cdots
y_{q+5}
\cdot
(t_{\alpha_1}^{-1})(c_r)
\cdot
t_{\alpha_1}(c_r)
\\
&=&
\beta_1
\cdots
\beta_{r+3}
\gamma_{2}^{r+2}
\cdot
\delta_{r+3}^{-1}
\cdot
y_2
\cdots
y_{q+5}
\cdot
(t_{\alpha_1}^{-1})(c_r)
\cdot
t_{\alpha_1}(c_r)
\\
&=&
Y_{2,r}
\cdots
Y_{q+5,r}
\cdot
(t_{\beta_1}\cdot t_{\delta_{r+3}}^{-1}\cdot t_{\alpha_1}^{-1})(c_r)
\cdot
\beta_1
\cdots
\beta_{r+3}
\cdot
t_{\alpha_1}(c_r)
\cdot
\gamma_{2}^{r+2}
\cdot
\delta_{r+3}^{-1}
\end{eqnarray*}
where $Y_{i,r}=(t_{\beta_1}\cdot t_{\delta_{r+3}}^{-1})(y_i)$. Again, we have a daisy relation of the form
$$\beta_1
\cdots
\beta_{r+3}
\cdot
t_{\alpha_1}(c_r)
\cdot
\gamma_{2}^{r+2}
=
z_1\cdot z_2 \cdots z_{r+3}\cdot z_{r+4}
$$
with $z_{r+4}=\delta_{r+3}$. See Figure~\ref{df3} for corresponding curves in planar sufrace.
By performing a daisy substitution and cancelling $z_{r+4}$ with $\delta_{r+3}^{-1}$, we get a  monodromy factorization $W_{\Gamma_{q,r}}'$ whose length is $b_1(\Sigma_{\Gamma_{q,r}})$.
$$Y_{2,r}
\cdots
Y_{q+5,r}
\cdot
(t_{\beta_1}\cdot t_{\delta_{r+3}}^{-1}\cdot t_{\alpha_1}^{-1})(c_r)
\cdot
z_{1}
\cdots
z_{r+3}
$$

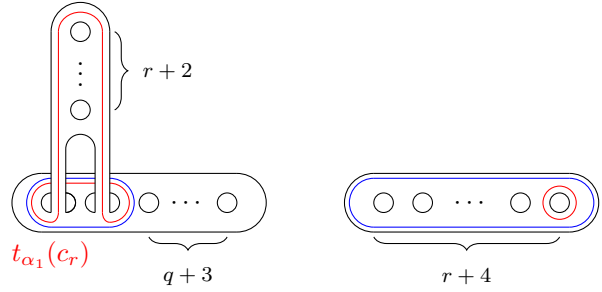
\begin{figure}[h]
\begin{tikzpicture}[scale=0.13]
\begin{scope}
\draw (19,0)--(20,0) (21.5,0)--(24.5,0) (26,0)--(39,0);
\draw (19,-6)--(39,-6);
\draw (19,0) arc (90:270:3);
\draw (39,0) arc (90:-90:3);

\draw[red] (20,-1) arc (90:270:2); 
\draw[red] (26,-1) arc (90:-90:2);
\draw[red] (20,-6) node[below]{$t_{\alpha_1}(c_r)
$};
\draw[red] (20,-5) to [out=right,in=down] (20.75,-4);
\draw[red] (26,-5) to [out=left,in=down] (25.25,-4);

\draw[red] (20.75,-4)--(20.75,14.5);
\draw[red] (25.25,-4)--(25.25,14.5);
\draw[red] (25.25,14.5) to[out=up,in=right] (23,16.5) to[out=left, in=up] (20.75,14.5);

\draw[blue] (24.5,-.5)--(21.5,-.5);
\draw[blue] (20,-.5) arc (90:270:2.5);
\draw[blue] (26,-.5) arc (90:-90:2.5);
\draw[blue] (20.,-5.5)--(26.,-5.5);

\draw (20,-2) arc (90:270:1);
\draw (21.5,-2) arc (90:-90:1);

\draw[red] (24.5,-1)--(21.5,-1);
\draw (21.5,-2) arc (90:-90:1);
\draw (20,-4)--(20,14.5);
\draw (21.5,-4)--(21.5,2.5);

\draw (21.5,2.5) arc (180:0:1.5);
\draw (20,14.5) arc (180:0:3);

\draw (23, 6.5) circle (1);

\draw (23,11.) node{$\vdots$};

\draw (23, 14.5) circle (1);

\draw (23,18) node[]{};
\draw (24+2,-2) arc (90:-90:1);
\draw (24.5,-2) arc (90:270:1);

\draw (26,-4)--(26,14.5);
\draw (24.5,-4)--(24.5,2.5);
\draw [decorate,decoration={brace,mirror,amplitude=5pt},xshift=0pt,yshift=0]
	(26.5,6.5) -- (26.5,14.5) node [black,midway,xshift=20pt,align=center] 
	{\footnotesize $r+2$};


\draw (30,-3) circle (1);
\draw (34,-3) node{$\cdots$};
\draw (38,-3) circle (1);

\draw [decorate,decoration={brace,mirror,amplitude=5pt},xshift=0pt,yshift=0]
	(30,-6.5) -- (38,-6.5) node [black,midway,yshift=-15pt] 
	{\footnotesize $q+3$};
\end{scope}

\begin{scope}[shift={(34,0)}]

\draw (19,0)--(39,0);
\draw (19,-6)--(39,-6);
\draw (19,0) arc (90:270:3);
\draw (39,0) arc (90:-90:3);

\draw (20,-3) circle (1);
\draw (24,-3) circle (1);
\draw (29,-3) node[]{$\cdots$};
\draw (34,-3) circle (1);
\draw (38,-3) circle (1);
\draw[red] (38,-3) circle (1.7);

\draw[blue] (19,-.5)--(39,-.5);
\draw[blue] (19,-5.5)--(39,-5.5);
\draw[blue] (19,-.5) arc (90:270:2.5);
\draw[blue] (39,-.5) arc (90:-90:2.5);

\draw [decorate,decoration={brace,mirror,amplitude=5pt},xshift=0pt,yshift=0]
	(19,-6.5) -- (38,-6.5) node [black,midway,yshift=-15pt] 
	{\footnotesize $r+4$};
\end{scope}
\end{tikzpicture}

\caption{Corresponding curves for a daisy relation}
\label{df3}
\end{figure}

\subsection{Relations for (j) family}

\begin{figure}[h]
\begin{tikzpicture}[scale=0.57]
\begin{scope}

\end{scope}

\begin{scope}[shift={(5,-3.25)}]
\node[bullet] at (0,0){};
\node[bullet] at (1,0){};
\node[bullet] at (1,1){};
\node[bullet] at (2,0){};

\node[bullet] at (1,3){};
\node[bullet] at (1,4){};
\node[bullet] at (1,5){};

\node[below] at (-0,0){$-2$};
\node[below] at (1,0){$-2$};
\node[below] at (2,0){$-6$};
\node[right] at (1,1){$-2$};

\node[right] at (1,5){\footnotesize{$-(q+4)$}};

\node[right] at (1,3){$-2$};
\node[right] at (1,4){$-2$};

\draw (1,1.8 ) node[circle, fill, inner sep=.5pt, black]{};
\draw (1,2 ) node[circle, fill, inner sep=.5pt, black]{};

\draw (1,2.2 ) node[circle, fill, inner sep=.5pt, black]{};

\draw (0,0)--(2,0);

\draw (1,0)--(1,1.5) (1,2.5)--(1,5);

	\draw [thick,decorate,decoration={brace,amplitude=5pt},xshift=-10pt,yshift=0pt]
	(1,1) -- (1,3) node [black,midway,xshift=-10pt] 
	{$q$};

\end{scope}

\begin{scope}[shift={(7.,-2.5)}, scale=0.25]
\draw (19,0)--(20,0) (21.5,0)--(24.5,0) (26,0)--(39,0);
\draw (19,-6)--(39,-6);
\draw (19,0) arc (90:270:3);
\draw (39,0) arc (90:-90:3);

\draw (20,-2) arc (90:270:1);
\draw (21.5,-2) arc (90:-90:1);

\draw (21.5,-2) arc (90:-90:1);
\draw (20,-4)--(20,14.5);
\draw (21.5,-4)--(21.5,2.5);

\draw (21.5,2.5) arc (180:0:1.5);
\draw (20,14.5) arc (180:0:3);

\draw (23, 6.5) circle (1);

\draw (23,11.) node{$\vdots$};

\draw (23, 14.5) circle (1);

\draw (23,18) node[]{};
\draw (24+2,-2) arc (90:-90:1);
\draw (24.5,-2) arc (90:270:1);

\draw (26,-4)--(26,14.5);
\draw (24.5,-4)--(24.5,2.5);
\draw [decorate,decoration={brace,mirror,amplitude=5pt},xshift=0pt,yshift=0]
	(26.5,6.5) -- (26.5,14.5) node [black,midway,xshift=20pt,align=center] 
	{\footnotesize $q+2$};
\draw[red] (21,6.5)--(21,14.5);
\draw[red] (25.,6.5)--(25.,14.5);
\draw[red] (25.,14.5) arc (0:180:2);
\draw[red] (25.,6.5) arc (0:-180:2);
\draw[red] (20,10.5) node[left]{$\beta$};


\draw (30,-3) circle (1);
\draw (34,-3) circle (1);
\draw (38,-3) circle (1);

\end{scope}
\end{tikzpicture}

\caption{Resolution graph $\Gamma_{q}$ and generic fiber $\Sigma_{\Gamma_{q}}$ for (j) family}
\label{j}
\end{figure}

Let $\Gamma_{q}$ be a resolution graph of (j) family as in Figure~\ref{j}. Then the generic fiber for $X_{\Gamma_{q}}$ is $\Sigma_{r}$ as in Figure~\ref{j} and the global monodromy  of $X_{\Gamma_{q}}$ is given by
\begin{eqnarray*}
\beta_1
\alpha_1^2
\beta_1^{q+1}
\beta_2
\cdots
\beta_{q+3}
\delta_{q+3}
\alpha_1
\alpha_2
\alpha_3
\alpha_4
\alpha_5
\gamma_5.
\end{eqnarray*}
We introduce a cancelling pair $\beta^{-1}\cdot\beta$ and rearrange the word using Hurwitz moves where $\beta$ is a simple closed curve in $\Sigma_r$ as in Figure~\ref{j}.
\begin{eqnarray*}
&&\hspace{-2em}
\beta_1
\cdot
\textcolor{red}{
\beta^{-1}
\cdot
\beta
}
\cdot
\alpha_1^2
\beta_1^{q+1}
\beta_2
\cdots
\beta_{q+3}
\delta_{q+3}
\alpha_1
\alpha_2
\alpha_3
\alpha_4
\alpha_5
\gamma_5
\\
&=&
\beta_2
\cdots
\beta_{q+3}
\cdot
\beta^{-1}
\cdot
\beta_1
\beta
\alpha_1^2
\beta_1^{q+1}
\delta_{q+3}
\alpha_1
\alpha_2
\alpha_3
\alpha_4
\alpha_5
\gamma_5
\\
&=&
\beta_2
\cdots
\beta_{q+3}
\cdot
\beta^{-1}
\cdot
\underline{
\beta_1
\beta
\alpha_1^2
\delta_{q+3}
\alpha_1
\alpha_2
\alpha_3
\alpha_4
\alpha_5
\gamma_5
}
\cdot
((t_{\alpha_1}^{-1}\cdot t_{\alpha_2}^{-1})(\beta_1))^{q+1}
\end{eqnarray*}
There is a obvious subsurface of $\Sigma_q$ which is diffeomorphic to $\Sigma_{0,-1}$ of Figure~\ref{df} so that image of each curve of $\Sigma_{0,-1}$ in $\Sigma_{q}$ is as follows
\begin{eqnarray*}
\beta_1 &\rightarrow& \beta_1
\\
\beta_2 &\rightarrow& \beta
\\
\delta_2 &\rightarrow& \delta_{q+3}
\\
\alpha_i &\rightarrow& \alpha_i
\end{eqnarray*}
By performing a monodromy substitution corresponds to $\Gamma_{0,-1}$ of (f) family, we have
\begin{eqnarray*}
&&
\beta_2
\cdots
\beta_{q+3}
\cdot
\beta^{-1}
\cdot
\textcolor{red}{
\beta_1
\beta
\alpha_1^2
\delta_{q+3}
\alpha_1
\alpha_2
\alpha_3
\alpha_4
\alpha_5
\gamma_5
}
\cdot
((t_{\alpha_1}^{-1}\cdot t_{\alpha_2}^{-1})(\beta_1))^{q+1}
\\
&=&
\beta_2
\cdots
\beta_{q+3}
\cdot
\beta^{-1}
\cdot
Y_{q,2}
\cdots
Y_{q,5}
\cdot
(t_{\beta_1}\cdot t_{\delta_{q+3}}^{-1}\cdot t_{\alpha_1}^{-1})(c_q)
\cdot
z_{q,1}
z_{q,2}
\cdot
B^{q+1}
\\
&=&
Y_{q,2}
\cdots
Y_{q,5}
\\
&&
\cdot
\beta_2
\cdots
\beta_{q+3}
\cdot
(t_{\beta_1}\cdot t_{\delta_{q+3}}^{-1}\cdot t_{\alpha_1}^{-1})(c_q)
\cdot
B^{q+1}
\cdot
\beta^{-1}
\cdot
(t_B^{-(q+1)})(z_{q,1})
\cdot
(t_B^{-(q+1)})(z_{q,2})
\end{eqnarray*}
Here $Y_{q,i}$ and $z_{q,j}$ is image of $Y_{i,-1}$ and $z_j$ of $\Sigma_{0,-1}$ in $\Sigma_{q}$ respectively, $c_{q}=t_{\alpha_2}^{-1}(\delta_{q+3})$ and $B=(t_{\alpha_1}^{-1}\cdot t_{\alpha_2}^{-1})(\beta_1)$.
We have a daisy relation of the form
$$\beta_2
\cdots
\beta_{q+3}
\cdot
(t_{\beta_1}\cdot t_{\delta_{q+3}}^{-1}\cdot t_{\alpha_1}^{-1})(c_q)
\cdot
B^{q+1}
=x_1\cdots x_{q+3}
$$
with $x_{q+3}=\beta$. See Figure~\ref{j1} for corresponding curves in planar surface. By performing a daisy substitution and cancelling $x_{q+3}$ with $\beta^{-1}$, we get a  monodromy factorization $W_{\Gamma_q}'$ whose length is $b_1(\Sigma_{\Gamma_{q}})$.
$$
Y_{q,2}\cdots
Y_{q,5} \cdot
x_1\cdots x_{q+2}
\cdot
(t_B^{-(q+1)})(z_{q,1})
\cdot
(t_B^{-(q+1)})(z_{q,2})
$$
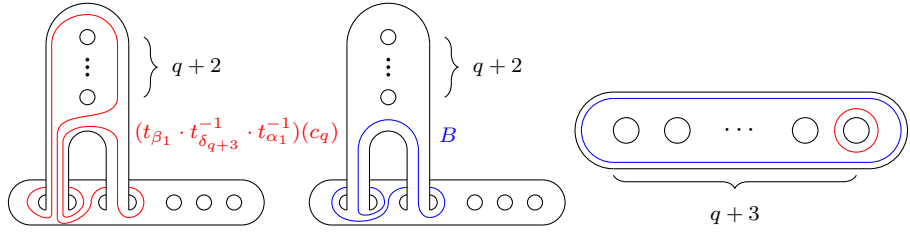
\begin{figure}[h]
\begin{tikzpicture}[scale=0.1]
\begin{scope}
\draw (0,0) arc (90:270:3);
\draw (2,-2) arc (90:270:1);
\draw (5,-2) arc (90:-90:1);
\draw (10,-2) arc (90:270:1);
\draw (13,-2) arc (90:-90:1);
\draw (19,-3) circle (1);
\draw (23,-3) circle (1);
\draw (27,-3) circle (1);
\draw (28,0) arc (90:-90:3);

\draw (5,-4)--(5,4); 
\draw (10,-4)--(10,4); 
\draw (5,4) arc (180:0:2.5);

\draw (7.5,11) circle (1);
\draw (7.5,14 ) node[circle, fill, inner sep=.5pt, black]{};
\draw (7.5,15 ) node[circle, fill, inner sep=.5pt, black]{};
\draw (7.5,16 ) node[circle, fill, inner sep=.5pt, black]{};

\draw [decorate,decoration={brace,mirror,amplitude=5pt},xshift=0pt,yshift=0]
	(15,11) -- (15,19) node [black,midway,xshift=20pt,align=center] 
	{\footnotesize $q+2$};

\draw (7.5,19) circle (1);

\draw (2,-4)--(2,18); 
\draw (13,-4)--(13,18); 
\draw (2,18) arc (180:0:5.5);

\draw (0,0)--(2,0) (5,0)--(10,0) (13,0)--(28,0);
\draw (0,-6)--(28,-6);

\draw[red] (2.75,-4.5)--(2.75,18);
\draw[red] (2.75,-4.5) to [out=down,in=down] (-0.5,-3) to [out=up,in=left] (2,-1);
\draw[red] (13,-1) arc (90:-90:2);
\draw[red] (13,-5) to [out=left,in=down] (11.5,-3)--(11.5,5) to [out=up,in=up] (4.25,5);

\draw[red] (3.5,-5)--(3.5,6);
\draw[red] (3.5,-5) to [out=down,in=down] (8,-3) to [out=up,in=left] (10,-1);
\draw[red] (3.5,6) to [out=up,in=down](11.5,11);
\draw[red] (4.25,-4.5)--(4.25,5);
\draw[red] (4.25,-4.5) to [out=down, in=down] (7,-3) to [out=up,in=right] (5,-1);

\draw[red](2.75,18) to [out=up,in=left] (7.5,22) to [out=right, in=up] (11.5,18)--(11.5,11);
\draw[red] (12.5,6) node[right]{\footnotesize $(t_{\beta_1}\cdot t_{\delta_{q+3}}^{-1}\cdot t_{\alpha_1}^{-1})(c_q)
$};

\end{scope}

\begin{scope}[shift={(40,0)}]
\draw (0,0) arc (90:270:3);
\draw (2,-2) arc (90:270:1);
\draw (5,-2) arc (90:-90:1);
\draw (10,-2) arc (90:270:1);
\draw (13,-2) arc (90:-90:1);
\draw (19,-3) circle (1);
\draw (23,-3) circle (1);
\draw (27,-3) circle (1);
\draw (28,0) arc (90:-90:3);

\draw (5,-4)--(5,4); 
\draw (10,-4)--(10,4); 
\draw (5,4) arc (180:0:2.5);

\draw (7.5,11) circle (1);
\draw (7.5,14 ) node[circle, fill, inner sep=.5pt, black]{};
\draw (7.5,15 ) node[circle, fill, inner sep=.5pt, black]{};
\draw (7.5,16 ) node[circle, fill, inner sep=.5pt, black]{};

\draw [decorate,decoration={brace,mirror,amplitude=5pt},xshift=0pt,yshift=0]
	(15,11) -- (15,19) node [black,midway,xshift=20pt,align=center] 
	{\footnotesize $q+2$};

\draw (7.5,19) circle (1);

\draw (2,-4)--(2,18); 
\draw (13,-4)--(13,18); 
\draw (2,18) arc (180:0:5.5);

\draw (0,0)--(2,0) (5,0)--(10,0) (13,0)--(28,0);
\draw (0,-6)--(28,-6);

\draw[blue] (13,-1) arc (90:-90:2);
\draw[blue] (13,-5) to [out=left,in=down] (11.5,-3)--(11.5,4) to[out=up,in=right](7.5,8) to [out=left,in=up] (3.5,4)--(3.5,-4) to [out=down,in=down] (7,-3) to [out=up,in=right](5,-1);
\draw[blue] (2,-1) to [out=left,in=up] (-0,-3) to [out=down,in=left] (3.5,-5.5) to [out=right,in=down] (7.5,-3) to[out=up,in=left] (10,-1);

\draw[blue] (13,6) node[right]{\footnotesize$B$};
\end{scope}
\end{tikzpicture}
\begin{tikzpicture}[scale=0.17]
\draw (19,4) node[]{};
\draw (19,0)--(39,0);
\draw (19,-6)--(39,-6);
\draw (19,0) arc (90:270:3);
\draw (39,0) arc (90:-90:3);

\draw (20,-3) circle (1);
\draw (24,-3) circle (1);
\draw (29,-3) node[]{$\cdots$};
\draw (34,-3) circle (1);
\draw (38,-3) circle (1);
\draw[red] (38,-3) circle (1.7);

\draw[blue] (19,-.5)--(39,-.5);
\draw[blue] (19,-5.5)--(39,-5.5);
\draw[blue] (19,-.5) arc (90:270:2.5);
\draw[blue] (39,-.5) arc (90:-90:2.5);

\draw [decorate,decoration={brace,mirror,amplitude=5pt},xshift=0pt,yshift=0]
	(19,-6.5) -- (38,-6.5) node [black,midway,yshift=-15pt] 
	{\footnotesize $q+3$};

\end{tikzpicture}
\caption{Corresponding curves for a daisy relation}
\label{j1}
\end{figure}

\subsection{Relations for (e) and (g) family}
\begin{figure}[h]
\begin{tikzpicture}[scale=0.6]
\begin{scope}
\node[bullet] at (0,0){};
\node[bullet] at (1,0){};
\node[bullet] at (1,1){};

\node[bullet] at (2,0){};
\node[bullet] at (4,0){};
\node[bullet] at (5,0){};
\node[bullet] at (6,0){};
\node[bullet] at (8,0){};
\node[bullet] at (9,0){};

\node[below] at (-0,0){$-3$};
\node[below] at (1,0){$-2$};
\node[above] at (2,0){$-2$};
\node[above] at (4,0){$-2$};
\node[above] at (5,0){$-4$};
\node[above] at (6,0){$-2$};
\node[above] at (8,0){$-2$};

\node[below] at (9,0){\footnotesize{$-(q+4)$}};

\node[right] at (1,1){$-(p+3)$};

\node at (3,0){$\cdots$};
\node at (7,0){$\cdots$};

\draw (0,0)--(2.5,0);
\draw (3.5,0)--(6.5,0);
\draw (7.5,0)--(9,0);
\draw (1,0)--(1,1);
	\draw [thick,decorate,decoration={brace,mirror,amplitude=5pt},xshift=0pt,yshift=-7pt]
	(2,0) -- (4,0) node [black,midway,yshift=-11pt] 
	{$q$};
	\draw [thick,decorate,decoration={brace,mirror,amplitude=5pt},xshift=0pt,yshift=-7pt]
	(6,0) -- (8,0) node [black,midway,yshift=-11pt] 
	{$p$};

\end{scope}
\end{tikzpicture}

\begin{tikzpicture}[scale=0.6]
\begin{scope}
\node[bullet] at (0,0){};
\node[bullet] at (1,0){};

\node[bullet] at (1,1){};
\node[] at (1,1.5){};

\node[bullet] at (2,0){};
\node[bullet] at (4,0){};
\node[bullet] at (5,0){};
\node[bullet] at (6,0){};
\node[bullet] at (8,0){};
\node[bullet] at (9,0){};
\node[bullet] at (10,0){};
\node[bullet] at (12,0){};
\node[bullet] at (13,0){};

\node[below] at (-0.5,0){\footnotesize{$-(r+4)$}};
\node[below] at (1,0){$-2$};
\node[above] at (2,0){$-2$};
\node[above] at (4,0){$-2$};
\node[above] at (5,0){$-3$};
\node[above] at (6,0){$-2$};
\node[above] at (8,0){$-2$};
\node[above] at (9,0){$-3$};
\node[above] at (10,0){$-2$};
\node[above] at (12,0){$-2$};
\node[below] at (13,0){\footnotesize{$-(q+4)$}};

\node[right] at (1,1){$-(p+3)$};

\node at (3,0){$\cdots$};
\node at (7,0){$\cdots$};
\node at (11,0){$\cdots$};

\draw (0,0)--(2.5,0);
\draw (3.5,0)--(6.5,0);
\draw (7.5,0)--(10.5,0);
\draw (11.5,0)--(13.,0);
\draw (1,0)--(1,1);
	\draw [thick,decorate,decoration={brace,mirror,amplitude=5pt},xshift=0pt,yshift=-7pt]
	(2,0) -- (4,0) node [black,midway,yshift=-11pt] 
	{$q$};
	\draw [thick,decorate,decoration={brace,mirror,amplitude=5pt},xshift=0pt,yshift=-7pt]
	(6,0) -- (8,0) node [black,midway,yshift=-11pt] 
	{$r$};
	\draw [thick,decorate,decoration={brace,mirror,amplitude=5pt},xshift=0pt,yshift=-7pt]
	(10,0) -- (12,0) node [black,midway,yshift=-11pt] 
	{$p$};

\end{scope}
\end{tikzpicture}

\begin{tikzpicture}[scale=0.13]

\draw (7,0)--(20,0) (21.5,0)--(24.5,0) (26,0)--(39,0);
\draw (7,-6)--(39,-6);
\draw (7,0) arc (90:270:3);
\draw (39,0) arc (90:-90:3);
\draw (8,-3) circle (1);
\draw (16,-3) circle (1);

\draw (12,-3) node{$\cdots$};
\draw (20,-2) arc (90:270:1);
\draw (21.5,-2) arc (90:-90:1);

\draw (21.5,-2) arc (90:-90:1);
\draw (20,-4)--(20,14.5);
\draw (21.5,-4)--(21.5,2.5);

\draw (21.5,2.5) arc (180:0:1.5);
\draw (20,14.5) arc (180:0:3);

\draw[red] (21,6.5)--(21,14.5);
\draw[red] (25.,6.5)--(25.,14.5);
\draw[red] (25.,14.5) arc (0:180:2);
\draw[red] (25.,6.5) arc (0:-180:2);
\draw[red] (20,10.5) node[left]{$\beta$};

\draw (23, 6.5) circle (1);

\draw (23,11.) node{$\vdots$};

\draw (23, 14.5) circle (1);

\draw (23,18) node[]{};
\draw (24+2,-2) arc (90:-90:1);
\draw (24.5,-2) arc (90:270:1);

\draw (26,-4)--(26,14.5);
\draw (24.5,-4)--(24.5,2.5);
\draw [decorate,decoration={brace,mirror,amplitude=5pt},xshift=0pt,yshift=0]
	(26.5,6.5) -- (26.5,14.5) node [black,midway,xshift=20pt,align=center] 
	{\footnotesize $p+1$};


\draw (30,-3) circle (1);
\draw (34,-3) node{$\cdots$};
\draw (38,-3) circle (1);

\draw [decorate,decoration={brace,mirror,amplitude=5pt},xshift=0pt,yshift=0]
	(8,-6.5) -- (16,-6.5) node [black,midway,yshift=-15pt] 
	{\footnotesize $r+2$};
\draw [decorate,decoration={brace,mirror,amplitude=5pt},xshift=0pt,yshift=0]
	(30,-6.5) -- (38,-6.5) node [black,midway,yshift=-15pt] 
	{\footnotesize $q+3$};

\end{tikzpicture}

\caption{Resolution graph $\Gamma_{p,q,r}$ and generic fiber $\Sigma_{\Gamma_{p,q,r}}$ for (e) and (g) family}
\label{eg}
\end{figure}
Let $\Gamma_{p,q,-1}$ be a resolution graph of (e) family and $\Gamma_{p,q,r}$(with $r\geq0$) be a resolution graph of (g) family as in Figure~\ref{eg}. Then the generic fiber for $X_{\Gamma_{p,q,r}}$ is $\Sigma_{p,q,r}$ as in Figure~\ref{eg} and the global monodromy  of $X_{\Gamma_{p,q,r}}$ is given by
\begin{eqnarray*}
&\phantom{0}& \hspace{-2 em} \beta_1\cdots\beta_{p+2}\alpha_1\cdots\alpha_{r+3}\gamma_{r+3}\delta_{p+2}\gamma_{r+3}^{q+1}\alpha_{r+4}\gamma_{r+4}^{r+1}\alpha_{r+5}\gamma_{r+5}^{p+1}\alpha_{r+6}\cdots\alpha_{r+q+7}\gamma_{r+q+7}
\end{eqnarray*}
We introduce cancelling pairs $\beta^{-1}\cdot\beta$
, $\gamma_{r+2}^{-1}\cdot\gamma_{r+2}$ and $\gamma_{r+3}^{-1}\cdot\gamma_{r+3}$ and rearrange the word using Hurwitz moves where $\beta$ is a simple closed curve in $\Sigma_{p,q,r}$ as in Figure~\ref{eg}.
\begin{eqnarray*}
&\phantom{0}& \hspace{-2 em} \beta_1\cdots\beta_{p+2}\alpha_1\cdots\alpha_{r+3}\gamma_{r+3}\delta_{p+2}\gamma_{r+3}^{q+1}\alpha_{r+4}\gamma_{r+4}^{r+1}\alpha_{r+5}\gamma_{r+5}^{p+1}\alpha_{r+6}\cdots\alpha_{r+q+7}\gamma_{r+q+7}
\\
&=& \beta_1\cdots\beta_{p+2}\cdot\textcolor{red}{\beta^{-1}\cdot\beta}\cdot\alpha_1\cdots\alpha_{r+2}\cdot\textcolor{red}{\gamma_{r+2}^{-1}\cdot\gamma_{r+2}}\cdot\alpha_{r+3}\cdot\textcolor{red}{\gamma_{r+3}^{-1}\cdot\gamma_{r+3}}
\\
&&\cdot\gamma_{r+3}\delta_{p+2}\gamma_{r+3}^{q+1}\alpha_{r+4}\gamma_{r+4}^{r+1}\alpha_{r+5}\gamma_{r+5}^{p+1}\alpha_{r+6}\cdots\alpha_{r+q+7}\gamma_{r+q+7}
\\
&=& \beta_1\cdots\beta_{p+2}\cdot\beta^{-1}\cdot\alpha_1\cdots\alpha_{r+2}\cdot\gamma_{r+2}^{-1}\cdot\gamma_{r+3}^{-1}
\\
&&\cdot\beta\gamma_{r+2}\alpha_{r+3}\gamma_{r+3}^2\delta_{p+2}\gamma_{r+3}^{q+1}\alpha_{r+4}\gamma_{r+4}^{r+1}\alpha_{r+5}\gamma_{r+5}^{p+1}\alpha_{r+6}\cdots\alpha_{r+q+7}\gamma_{r+q+7}
\\
&=& \beta_1\cdots\beta_{p+2}\cdot\beta^{-1}\cdot\alpha_1\cdots\alpha_{r+2}\cdot\gamma_{r+2}^{-1}\cdot\gamma_{r+3}^{-1}\cdot\gamma_{r+4}^{r+1}\cdot\gamma_{r+5}^p
\\
&&\cdot\beta\gamma_{r+2}\alpha_{r+3}\gamma_{r+3}^2\delta_{p+2}\gamma_{r+3}^{q+1}\alpha_{r+4}\alpha_{r+5}\gamma_{r+5}\alpha_{r+6}\cdots\alpha_{r+q+7}\gamma_{r+q+7}
\\
&=& \beta_2\cdots\beta_{p+2}\cdot\beta^{-1}\cdot\alpha_1\cdots\alpha_{r+2}\cdot\gamma_{r+2}^{-1}\cdot\gamma_{r+4}^{r+1}\cdot\gamma_{r+5}^p\cdot (t_{\beta_1}(\gamma_{r+3}))^{-1}
\\
&&\cdot\beta\gamma_{r+2}
t_{\beta_1}(\alpha_{r+3})
(t_{\beta_1}(\gamma_{r+3}))^2
\delta_{p+2}
(t_{\beta_1}(\gamma_{r+3}))^{q+1}
\beta_1
\alpha_{r+4}
\\
&&
\cdot\alpha_{r+5}\gamma_{r+5}\alpha_{r+6}\cdots\alpha_{r+q+7}\gamma_{r+q+7}
\\&=& \beta_2\cdots\beta_{p+2}\cdot\beta^{-1}\cdot\alpha_1\cdots\alpha_{r+2}\cdot\gamma_{r+2}^{-1}\cdot\gamma_{r+4}^{r+1}\cdot\gamma_{r+5}^p\cdot (t_{\beta_1}(\gamma_{r+3}))^{-1}
\\
&&\cdot\beta\gamma_{r+2}
t_{\beta_1}(\alpha_{r+3})
(t_{\beta_1}(\gamma_{r+3}))^2
\delta_{p+2}
(t_{\beta_1}(\gamma_{r+3}))^{q+1}
\alpha_{r+4}
(t_{\alpha_{r+4}}^{-1})(\beta_1)
\\
&&
\cdot\alpha_{r+5}\gamma_{r+5}\alpha_{r+6}\cdots\alpha_{r+q+7}\gamma_{r+q+7}
\end{eqnarray*}
Let $\gamma=t_{\beta_1}(\gamma_{r+3})$. Then $\gamma$ and $\delta_{p+2}$ intersect geometrically. Because of the braid relation $\gamma\cdot\delta_{p+2}\cdot\gamma=\delta_{p+2}\cdot\gamma\cdot\delta_{p+2}$, we have
\begin{eqnarray*}
&\phantom{0}& \hspace{-2em} 
\gamma^2
\delta_{p+2}
\gamma^{q+1}
\\
&=&
\gamma
\cdot
\delta_{p+2}
\cdot
\gamma
\cdot
\delta_{p+2}
\cdot
\gamma^q
\\
&=&
\gamma
\cdot
\delta_{p+2}^2
\cdot
\gamma
\cdot
\delta_{p+2}
\cdot
\gamma^{q-1}
\\
&&
\phantom{0000}
\vdots
\\
&=&
\gamma
\cdot
\delta_{p+2}^{q+1}
\cdot
\gamma
\cdot
\delta_{p+2}
\\
&=&
\gamma
\cdot
\delta_{p+2}^{q+2}
\cdot
(t_{\delta_{p+2}}^{-1})(\gamma)
\\
&=&
(t_{\gamma}(\delta_{p+2}))^{q+2}
\cdot\gamma\cdot
(t_{\delta_{p+2}}^{-1})(\gamma)
\\
&=&
(t_{\gamma}(\delta_{p+2}))^{q+3}
\cdot (t_{\delta_{p+2}}^{-1}\cdot t_{\gamma}^{-1}\cdot t_{\delta_{p+2}})(\gamma)
\phantom{000}
(\because (t_{\delta_{p+2}}^{-1})(\gamma)
=t_{\gamma}(\delta_{p+2}))
\\
\end{eqnarray*}
Back to the global monodromy, we have
\begin{eqnarray*}
&&\beta_2\cdots\beta_{p+2}\cdot\beta^{-1}\cdot\alpha_1\cdots\alpha_{r+2}\cdot\gamma_{r+2}^{-1}\cdot\gamma_{r+4}^{r+1}\cdot\gamma_{r+5}^p\cdot (t_{\beta_1}(\gamma_{r+3}))^{-1}
\\
&&\cdot\beta\gamma_{r+2}
t_{\beta_1}(\alpha_{r+3})
(t_{\gamma}(\delta_{p+2}))^{q+3}
(t_{\delta_{p+2}}^{-1}\cdot t_{\gamma}^{-1}\cdot t_{\delta_{p+2}})(\gamma)
\alpha_{r+4}
(t_{\alpha_{r+4}}^{-1})(\beta_1)
\\
&&
\cdot\alpha_{r+5}\gamma_{r+5}\alpha_{r+6}\cdots\alpha_{r+q+7}\gamma_{r+q+7}
\\
&=&
\beta_2\cdots\beta_{p+2}\cdot\beta^{-1}\cdot\alpha_1\cdots\alpha_{r+2}\cdot\gamma_{r+2}^{-1}\cdot\gamma_{r+4}^{r+1}\cdot\gamma_{r+5}^p\cdot (t_{\beta_1}(\gamma_{r+3}))^{-1}
\\
&&
\cdot\beta\gamma_{r+2}
t_{\beta_1}(\alpha_{r+3})
(t_{\gamma}(\delta_{p+2}))^{q+3}
\alpha_{r+4}
(t_{\alpha_{r+4}}^{-1}\cdot t_{\delta_{p+2}}^{-1}\cdot t_{\gamma}^{-1}\cdot t_{\delta_{p+2}})(\gamma)
\cdot
(t_{\alpha_{r+4}}^{-1})(\beta_1)
\\
&&
\cdot\alpha_{r+5}\gamma_{r+5}\alpha_{r+6}\cdots\alpha_{r+q+7}\gamma_{r+q+7}
\\
&=&
\beta_2\cdots\beta_{p+2}\cdot\beta^{-1}\cdot\alpha_1\cdots\alpha_{r+2}\cdot\gamma_{r+2}^{-1}\cdot\gamma_{r+4}^{r+1}\cdot\gamma_{r+5}^p\cdot (t_{\beta_1}(\gamma_{r+3}))^{-1}
\\
&&
\cdot\gamma_{r+2}
\cdot
t_{\beta_1}(\alpha_{r+3})
\cdot
\underline{
\beta
\cdot
(t_{\gamma}(\delta_{p+2}))^{q+3}
\alpha_{r+4}
\alpha_{r+5}\gamma_{r+5}\alpha_{r+6}\cdots\alpha_{r+q+7}\gamma_{r+q+7}
}
\\
&&
\cdot
(t_{\alpha_{r+4}}^{-1}\cdot t_{\delta_{p+2}}^{-1}\cdot t_{\gamma}^{-1}\cdot t_{\delta_{p+2}})(\gamma)
\cdot
(t_{\alpha_{r+4}}^{-1})(\beta_1)
\end{eqnarray*}
The underlined part can be seen an embedding of the linear plumbing $L_q$ in Figure~\ref{egfamily2}: Let $L_q$ be a linear plumbing and $\Sigma_{L_q}$ be a generic fiber for $X_{L_q}$ as in Figure~\ref{egfamily2}. Then the monodromy for $X_{L_q}$ can be written as
$$
a_1^{q+3}a_2 a_3a_4b_4 a_5\cdots a_{q+6} b_{q+6}
$$
where $a_i$ is simple closed curve in $\Sigma_{L_q}$ enclosing $i$th hole and $b_j$ is simple closed curve in $\Sigma_{L_q}$ enclosing from the first to $i$th holes. Then there is a planar subsurface of $\Sigma_{\Gamma_{p,q,r}}$ which is diffeomorphic to $\Sigma_{L_q}$ so that the image of each curves are
\begin{eqnarray*}
a_1 &\rightarrow&  t_{\gamma}(\delta_{p+2}) 
\\
a_2 &\rightarrow& \beta
\\
a_{i} &\rightarrow& \alpha_{r+i+1}
\phantom{0000}(i=3,\dots, q+6)
\\
b_{4} &\rightarrow& \gamma_{r+5}
\\
b_{q+6} &\rightarrow& \gamma_{r+q+7}
\end{eqnarray*}
\begin{figure}[h]
\begin{tikzpicture}[scale=0.15]
\begin{scope}

\draw (7,0)--(20,0) (21.5,0)--(24.5,0) (26,0)--(39,0);
\draw (7,-6)--(39,-6);
\draw (7,0) arc (90:270:3);
\draw (39,0) arc (90:-90:3);
\draw (8,-3) circle (1);
\draw (16,-3) circle (1);

\draw (12,-3) node{$\cdots$};
\draw (20,-2) arc (90:270:1);
\draw (21.5,-2) arc (90:-90:1);

\draw (21.5,-2) arc (90:-90:1);
\draw (20,-4)--(20,14.5);
\draw (21.5,-4)--(21.5,2.5);

\draw (21.5,2.5) arc (180:0:1.5);
\draw (20,14.5) arc (180:0:3);

\draw[red] (20.5,-4)--(20.5,14.5);
\draw[red] (21,-4.5)--(21,4);
\draw[red] (21,4) to [out=up, in=down] (25.5,5.5)-- (25.5,14.5);
\draw[red] (20.5,14.5) to [out=up, in=left] (23,16.5)
to [out=right,in=up] (25.5,14.5);
\draw[red](21,-4.5) to [out=down,in=down] (23,-3)
to [out=up,in=right] (21.5,-1)(8,-1)--(20,-1);
\draw[red] (20.5,-4) to [out=down,in=right] (19.5,-4.8)--(8,-4.8);
\draw[red] (8,-1) to[out=left,in=up] (6,-3)
to [out=down,in=left](8,-4.8);

\draw[red] (20,10.5) node[left]{$t_{\gamma}(\delta_{p+2})$};

\draw (23, 6.5) circle (1);

\draw (23,11.) node{$\vdots$};

\draw (23, 14.5) circle (1);

\draw (23,18) node[]{};
\draw (24+2,-2) arc (90:-90:1);
\draw (24.5,-2) arc (90:270:1);

\draw (26,-4)--(26,14.5);
\draw (24.5,-4)--(24.5,2.5);
\draw [decorate,decoration={brace,mirror,amplitude=5pt},xshift=0pt,yshift=0]
	(26.5,6.5) -- (26.5,14.5) node [black,midway,xshift=20pt,align=center] 
	{\footnotesize $p+1$};


\draw (30,-3) circle (1);
\draw (34,-3) node{$\cdots$};
\draw (38,-3) circle (1);

\draw [decorate,decoration={brace,mirror,amplitude=5pt},xshift=0pt,yshift=0]
	(8,-6.5) -- (16,-6.5) node [black,midway,yshift=-15pt] 
	{\footnotesize $r+2$};
\draw [decorate,decoration={brace,mirror,amplitude=5pt},xshift=0pt,yshift=0]
	(30,-6.5) -- (38,-6.5) node [black,midway,yshift=-15pt] 
	{\footnotesize $q+3$};
\end{scope}

\end{tikzpicture}
\caption{$t_{\gamma}(\delta_{p+2})$ in $\Sigma_{\gamma_{p,q,r}}$}
\label{egfamily1}
\end{figure}
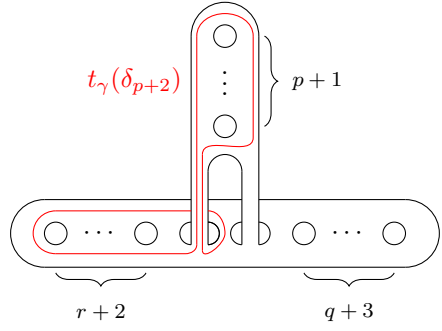
\begin{figure}[h]
\begin{tikzpicture}
\draw (0,0) node[above]{$-2$};
\draw (1,0) node[]{$\cdots$};
\draw (2,0) node[above]{$-2$};
\draw (3,0) node[above]{$-5$};
\draw (4,0) node[below]{$-(q+4)$};

\draw (0,0)--(.5,0) (1.5,0)--(4,0);
\draw (0,0 ) node[circle, fill, inner sep=1.5pt, black]{};
\draw (2,0 ) node[circle, fill, inner sep=1.5pt, black]{};

\draw (3,0 ) node[circle, fill, inner sep=1.5pt, black]{};
\draw (4,0 ) node[circle, fill, inner sep=1.5pt, black]{};
\draw (0,-.5) node{};

\draw [decorate,decoration={brace,mirror,amplitude=5pt},xshift=0pt,yshift=-5]
	(0,0) -- (2,0) node [black,midway,yshift=-15pt] 
	{\footnotesize $q+2$};

\draw (0,-1.5) node[]{};

\end{tikzpicture}
\begin{tikzpicture}
\begin{scope}[shift={(6,.75)},scale=0.17]
\draw (7,0)--(34,0);
\draw (7,-6)--(34,-6);
\draw (7,0) arc (90:270:3);
\draw (34,0) arc (90:-90:3);
\draw (8,-3) circle (1);
\draw (12,-3) circle (1);

\draw (16,-3) circle (1);
\draw (20,-3) circle (1);

\draw (25,-3) circle (1);
\draw (29,-3) node[]{$\cdots$};

\draw (33,-3) circle (1);

\draw[red] (20,-1)--(8,-1);
\draw[red] (20,-5)--(8,-5);
\draw[red] (8,-1) arc (90:270:2);
\draw[red] (20,-1) arc (90:-90:2);
\draw[red] (23,-5.) node {\footnotesize{$b_{4}$}};

\draw [decorate,decoration={brace,mirror,amplitude=5pt},xshift=0pt,yshift=0]
	(25,-6.5) -- (33,-6.5) node [black,midway,yshift=-15pt] 
	{\footnotesize $q+2$};

\end{scope}

\end{tikzpicture}
\caption{Linear plumbing $L_q$ and the generic fiber $\Sigma_{L_q}$ for $X_{L_q}$}
\label{egfamily2}
\end{figure}
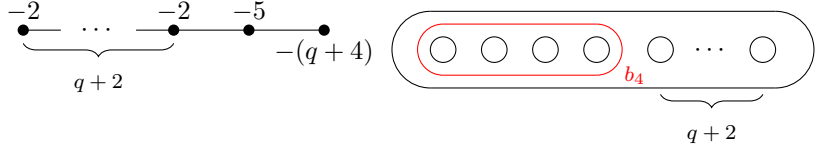
The linear plumbing $L_q$ can be rationally blowdown and the relation in the mapping class group of $\Sigma_{L_q}$ for the rational blowdown was given by Endo-Mark-Van Horn-Morris in 
~\cite{MR2783383}.
\begin{eqnarray*}
&\phantom{0}& \hspace{-2 em} a_1^{q+3}a_2 a_3a_4b_4 a_5\cdots a_{q+6} b_{q+6}= y_1y_2\cdots y_{q+6}
\end{eqnarray*}
\begin{figure}[h]
\begin{tikzpicture}[scale=0.13]
\begin{scope}
\draw (7,0)--(34,0);
\draw (7,-6)--(34,-6);
\draw (7,0) arc (90:270:3);
\draw (34,0) arc (90:-90:3);
\draw (8,-3) circle (1);
\draw (12,-3) circle (1);

\draw (16,-3) circle (1);
\draw (20,-3) circle (1);

\draw (25,-3) circle (1);
\draw (29,-3) node[]{$\cdots$};

\draw (33,-3) circle (1);

\draw[red] (16,-1)--(8,-1);
\draw[red] (16,-5)--(8,-5);
\draw[red] (8,-1) arc (90:270:2);
\draw[red] (16,-1) arc (90:-90:2);
\draw[red] (14,2.) node {{$y_1$}};

\draw [decorate,decoration={brace,mirror,amplitude=5pt},xshift=0pt,yshift=0]
	(25,-6.5) -- (33,-6.5) node [black,midway,yshift=-15pt] 
	{\footnotesize $q+2$};

\end{scope}

\begin{scope}[shift={(35,0)}]

\draw (7,0)--(20,0) (21.5,0)--(24.5,0) (26,0)--(39,0);
\draw (7,-6)--(39,-6);
\draw (7,0) arc (90:270:3);
\draw (39,0) arc (90:-90:3);
\draw (8,-3) circle (1);
\draw (16,-3) circle (1);

\draw (12,-3) node{$\cdots$};
\draw (20,-2) arc (90:270:1);
\draw (21.5,-2) arc (90:-90:1);

\draw (21.5,-2) arc (90:-90:1);
\draw (20,-4)--(20,14.5);
\draw (21.5,-4)--(21.5,2.5);

\draw (21.5,2.5) arc (180:0:1.5);
\draw (20,14.5) arc (180:0:3);

\draw[red](25.5,-4.8) to [out=right,in=down] (27.5,-3)
to [out=up,in=right] (26,-1)(8,-1)--(20,-1) (21.5,-1)--(24.5,-1);
\draw[red] (25.5,-4.8)--(8,-4.8);
\draw[red] (8,-1) to[out=left,in=up] (6,-3)
to [out=down,in=left](8,-4.8);

\draw[red] (17,2) node[left]{$Y_{p,1,r}$};

\draw (23, 6.5) circle (1);

\draw (23,11.) node{$\vdots$};

\draw (23, 14.5) circle (1);

\draw (23,18) node[]{};
\draw (24+2,-2) arc (90:-90:1);
\draw (24.5,-2) arc (90:270:1);

\draw (26,-4)--(26,14.5);
\draw (24.5,-4)--(24.5,2.5);
\draw [decorate,decoration={brace,mirror,amplitude=5pt},xshift=0pt,yshift=0]
	(26.5,6.5) -- (26.5,14.5) node [black,midway,xshift=20pt,align=center] 
	{\footnotesize $p+1$};


\draw (30,-3) circle (1);
\draw (34,-3) node{$\cdots$};
\draw (38,-3) circle (1);

\draw [decorate,decoration={brace,mirror,amplitude=5pt},xshift=0pt,yshift=0]
	(8,-6.5) -- (16,-6.5) node [black,midway,yshift=-15pt] 
	{\footnotesize $r+2$};
\draw [decorate,decoration={brace,mirror,amplitude=5pt},xshift=0pt,yshift=0]
	(30,-6.5) -- (38,-6.5) node [black,midway,yshift=-15pt] 
	{\footnotesize $q+3$};
\end{scope}

\end{tikzpicture}
\caption{$y_1$ and $Y_{p,1,r}=(t_{\alpha_{r+4}}^{-1}\cdot t_{\beta_1}^{-1}\cdot t_{\alpha_{r+4}})(Y_{p,1,r})$}
\label{egfamily3}

\end{figure}
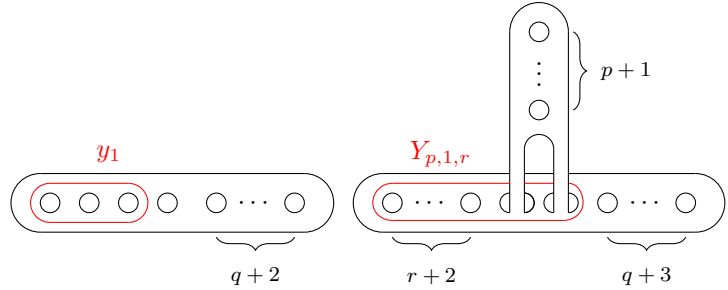
\begin{figure}[h]
\begin{tikzpicture}[scale=0.13]
\begin{scope}
\draw (7,0)--(34,0);
\draw (7,-6)--(34,-6);
\draw (7,0) arc (90:270:3);
\draw (34,0) arc (90:-90:3);
\draw (8,-3) circle (1);
\draw (12,-3) circle (1);

\draw (16,-3) circle (1);
\draw (20,-3) circle (1);

\draw (25,-3) circle (1);
\draw (29,-3) node[]{$\cdots$};

\draw (33,-3) circle (1);

\draw[red] (33,-1)--(12,-1);
\draw[red] (33,-5)--(12,-5);
\draw[red] (12,-1) arc (90:270:2);
\draw[red] (33,-1) arc (90:-90:2);
\draw[red] (14,2.) node {{$y_2$}};

\draw [decorate,decoration={brace,mirror,amplitude=5pt},xshift=0pt,yshift=0]
	(25,-6.5) -- (33,-6.5) node [black,midway,yshift=-15pt] 
	{\footnotesize $q+2$};

\end{scope}

\begin{scope}[shift={(35,0)}]

\draw (7,0)--(20,0) (21.5,0)--(24.5,0) (26,0)--(39,0);
\draw (7,-6)--(39,-6);
\draw (7,0) arc (90:270:3);
\draw (39,0) arc (90:-90:3);
\draw (8,-3) circle (1);
\draw (16,-3) circle (1);

\draw (12,-3) node{$\cdots$};
\draw (20,-2) arc (90:270:1);
\draw (21.5,-2) arc (90:-90:1);

\draw (21.5,-2) arc (90:-90:1);
\draw (20,-4)--(20,14.5);
\draw (21.5,-4)--(21.5,2.5);

\draw (21.5,2.5) arc (180:0:1.5);
\draw (20,14.5) arc (180:0:3);

\draw[red] (20.5,-4)--(20.5,14.5);
\draw[red] (21,-4.5)--(21,4);
\draw[red] (21,4) to [out=up, in=down] (25.5,5.5)-- (25.5,14.5);
\draw[red] (20.5,14.5) to [out=up, in=left] (23,16.5)
to [out=right,in=up] (25.5,14.5);
\draw[red](21,-4.5) to [out=down,in=down] (23,-3)
to [out=up,in=left] (24.5,-1) (26,-1)--(38,-1);
\draw[red] (20.5,-4) to [out=down,in=left] (21.,-5.5)--(38,-5.5);
\draw[red] (38,-1) arc(90:-90:2.25);


\draw[red] (17,2) node[left]{$Y_{p,2,r}$};

\draw (23, 6.5) circle (1);

\draw (23,11.) node{$\vdots$};

\draw (23, 14.5) circle (1);

\draw (23,18) node[]{};
\draw (24+2,-2) arc (90:-90:1);
\draw (24.5,-2) arc (90:270:1);

\draw (26,-4)--(26,14.5);
\draw (24.5,-4)--(24.5,2.5);
\draw [decorate,decoration={brace,mirror,amplitude=5pt},xshift=0pt,yshift=0]
	(26.5,6.5) -- (26.5,14.5) node [black,midway,xshift=20pt,align=center] 
	{\footnotesize $p+1$};


\draw (30,-3) circle (1);
\draw (34,-3) node{$\cdots$};
\draw (38,-3) circle (1);

\draw [decorate,decoration={brace,mirror,amplitude=5pt},xshift=0pt,yshift=0]
	(8,-6.5) -- (16,-6.5) node [black,midway,yshift=-15pt] 
	{\footnotesize $r+2$};
\draw [decorate,decoration={brace,mirror,amplitude=5pt},xshift=0pt,yshift=0]
	(30,-6.5) -- (38,-6.5) node [black,midway,yshift=-15pt] 
	{\footnotesize $q+3$};
\end{scope}

\end{tikzpicture}
\caption{$y_2$ and $Y_{p,2,r}$}
\label{egfamily4}
\end{figure}
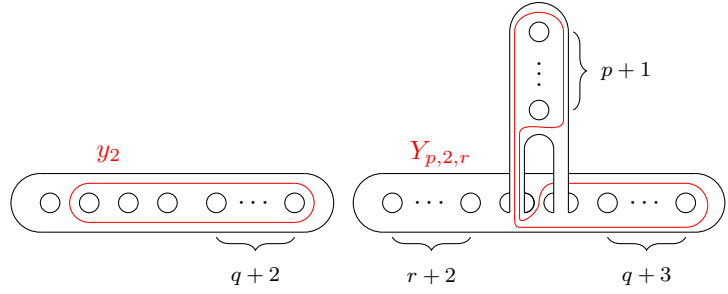
\begin{figure}[h]
\begin{tikzpicture}[scale=0.13]
\begin{scope}
\draw (7,0)--(34,0);
\draw (7,-6)--(34,-6);
\draw (7,0) arc (90:270:3);
\draw (34,0) arc (90:-90:3);
\draw (8,-3) circle (1);
\draw (12,-3) circle (1);

\draw (16,-3) circle (1);
\draw (20,-3) circle (1);

\draw (25,-3) circle (1);
\draw (29,-3) node[]{$\cdots$};

\draw (33,-3) circle (1);

\draw[red] (33,-1)--(8,-1);
\draw[red] (8,-1) arc (90:270:2);
\draw[red] (33,-1) arc (90:-90:2);
\draw[red] (8,-5) arc (270:360:2);
\draw[red] (33,-5) arc (270:180:2);
\draw[red] (10,-3) to [out=up,in=left] (12,-1.5)--(29,-1.5);
\draw[red] (31,-3) to [out=up,in=right] (29,-1.5);

\draw[red] (14,2.) node {{$y_3$}};

\draw [decorate,decoration={brace,mirror,amplitude=5pt},xshift=0pt,yshift=0]
	(25,-6.5) -- (33,-6.5) node [black,midway,yshift=-15pt] 
	{\footnotesize $q+2$};

\end{scope}

\begin{scope}[shift={(35,0)}]

\draw (7,0)--(20,0) (21.5,0)--(24.5,0) (26,0)--(39,0);
\draw (7,-6)--(39,-6);
\draw (7,0) arc (90:270:3);
\draw (39,0) arc (90:-90:3);
\draw (8,-3) circle (1);
\draw (16,-3) circle (1);

\draw (12,-3) node{$\cdots$};
\draw (20,-2) arc (90:270:1);
\draw (21.5,-2) arc (90:-90:1);

\draw (21.5,-2) arc (90:-90:1);
\draw (20,-4)--(20,14.5);
\draw (21.5,-4)--(21.5,2.5);

\draw (21.5,2.5) arc (180:0:1.5);
\draw (20,14.5) arc (180:0:3);

\draw[red] (20.5,-4)--(20.5,14.5);
\draw[red] (21,-4.5)--(21,4);
\draw[red] (21,4) to [out=up, in=down] (25.5,5.5)-- (25.5,14.5);
\draw[red] (20.5,14.5) to [out=up, in=left] (23,16.5)
to [out=right,in=up] (25.5,14.5);
\draw[red](21,-4.5) to [out=down,in=down] (23,-3)
to [out=up,in=left] (24.5,-1)(8,-.5)--(20,-.5)
(21.5,-.5)--(24.5,-.5)
(26,-.5)--(38,-.5)
;
\draw[red] (38,-.5) arc (90:-180:2.25);
\draw[red] (35.75,-2.75) to [out=up,in=right] (34.5,-1)--(26,-1);
\draw[red] (20.5,-4) to [out=down,in=right] (19.5,-4.8)--(8,-4.8);
\draw[red] (8,-.5) to[out=left,in=up] (6,-3)
to [out=down,in=left](8,-4.8);

\draw[red] (17,2) node[left]{$Y_{p,3,r}$};

\draw (23, 6.5) circle (1);

\draw (23,11.) node{$\vdots$};

\draw (23, 14.5) circle (1);

\draw (23,18) node[]{};
\draw (24+2,-2) arc (90:-90:1);
\draw (24.5,-2) arc (90:270:1);

\draw (26,-4)--(26,14.5);
\draw (24.5,-4)--(24.5,2.5);
\draw [decorate,decoration={brace,mirror,amplitude=5pt},xshift=0pt,yshift=0]
	(26.5,6.5) -- (26.5,14.5) node [black,midway,xshift=20pt,align=center] 
	{\footnotesize $p+1$};


\draw (30,-3) circle (1);
\draw (34,-3) node{$\cdots$};
\draw (38,-3) circle (1);

\draw [decorate,decoration={brace,mirror,amplitude=5pt},xshift=0pt,yshift=0]
	(8,-6.5) -- (16,-6.5) node [black,midway,yshift=-15pt] 
	{\footnotesize $r+2$};
\draw [decorate,decoration={brace,mirror,amplitude=5pt},xshift=0pt,yshift=0]
	(30,-6.5) -- (38,-6.5) node [black,midway,yshift=-15pt] 
	{\footnotesize $q+3$};
\end{scope}

\end{tikzpicture}
\begin{tikzpicture}[scale=0.13]
\begin{scope}
\draw (7,0)--(34,0);
\draw (7,-6)--(34,-6);
\draw (7,0) arc (90:270:3);
\draw (34,0) arc (90:-90:3);
\draw (8,-3) circle (1);
\draw (12,-3) circle (1);

\draw (16,-3) circle (1);
\draw (20,-3) circle (1);

\draw (25,-3) circle (1);
\draw (29,-3) node[]{$\cdots$};

\draw (33,-3) circle (1);

\draw[red] (25,-1)--(8,-1);
\draw[red] (8,-1) arc (90:270:2);
\draw[red] (25,-1) arc (90:-90:2);
\draw[red] (8,-5) arc (270:360:2);
\draw[red] (25,-5) arc (270:180:2);
\draw[red] (10,-3) to [out=up,in=left] (12,-1.5)--(21,-1.5);
\draw[red] (23,-3) to [out=up,in=right] (21,-1.5);

\draw[red] (14,2.) node {{$y_{q+4}$}};

\draw [decorate,decoration={brace,mirror,amplitude=5pt},xshift=0pt,yshift=0]
	(25,-6.5) -- (33,-6.5) node [black,midway,yshift=-15pt] 
	{\footnotesize $q+2$};

\end{scope}

\begin{scope}[shift={(35,0)}]

\draw (3,12) node[]{$\vdots$};

\draw (7,0)--(20,0) (21.5,0)--(24.5,0) (26,0)--(39,0);
\draw (7,-6)--(39,-6);
\draw (7,0) arc (90:270:3);
\draw (39,0) arc (90:-90:3);
\draw (8,-3) circle (1);
\draw (16,-3) circle (1);

\draw (12,-3) node{$\cdots$};
\draw (20,-2) arc (90:270:1);
\draw (21.5,-2) arc (90:-90:1);

\draw (21.5,-2) arc (90:-90:1);
\draw (20,-4)--(20,14.5);
\draw (21.5,-4)--(21.5,2.5);

\draw (21.5,2.5) arc (180:0:1.5);
\draw (20,14.5) arc (180:0:3);

\draw[red] (20.5,-4)--(20.5,14.5);
\draw[red] (21,-4.5)--(21,4);
\draw[red] (21,4) to [out=up, in=down] (25.5,5.5)-- (25.5,14.5);
\draw[red] (20.5,14.5) to [out=up, in=left] (23,16.5)
to [out=right,in=up] (25.5,14.5);
\draw[red](21,-4.5) to [out=down,in=down] (23,-3)
to [out=up,in=left] (24.5,-1)(8,-.5)--(20,-.5)
(21.5,-.5)--(24.5,-.5)
(26,-.5)--(34,-.5)
;
\draw[red] (34,-.5) arc (90:-180:2.25);
\draw[red] (31.75,-2.75) to [out=up,in=right] (30.5,-1)--(26,-1);
\draw[red] (20.5,-4) to [out=down,in=right] (19.5,-4.8)--(8,-4.8);
\draw[red] (8,-.5) to[out=left,in=up] (6,-3)
to [out=down,in=left](8,-4.8);

\draw[red] (17,2) node[left]{$Y_{p,q+4,r}$};

\draw (23, 6.5) circle (1);

\draw (23,11.) node{$\vdots$};

\draw (23, 14.5) circle (1);

\draw (23,18) node[]{};
\draw (24+2,-2) arc (90:-90:1);
\draw (24.5,-2) arc (90:270:1);

\draw (26,-4)--(26,14.5);
\draw (24.5,-4)--(24.5,2.5);
\draw [decorate,decoration={brace,mirror,amplitude=5pt},xshift=0pt,yshift=0]
	(26.5,6.5) -- (26.5,14.5) node [black,midway,xshift=20pt,align=center] 
	{\footnotesize $p+1$};


\draw (30,-3) circle (1);
\draw (34,-3) circle (1);
\draw (37.5, -3) node[circle,fill, inner sep=.5pt,black]{};
\draw (38., -3) node[circle,fill, inner sep=.5pt,black]{};
\draw (37., -3) node[circle,fill, inner sep=.5pt,black]{};

\draw (40,-3) circle (1);

\draw [decorate,decoration={brace,mirror,amplitude=5pt},xshift=0pt,yshift=0]
	(8,-6.5) -- (16,-6.5) node [black,midway,yshift=-15pt] 
	{\footnotesize $r+2$};
\draw [decorate,decoration={brace,mirror,amplitude=5pt},xshift=0pt,yshift=0]
	(34,-6.5) -- (40,-6.5) node [black,midway,yshift=-15pt] 
	{\footnotesize $q+2$};
\end{scope}

\end{tikzpicture}
\caption{$y_i$ and $Y_{p,i,r}$ for $(i=3,\dots, q+4)$} 
\label{egfamily5}
\end{figure}
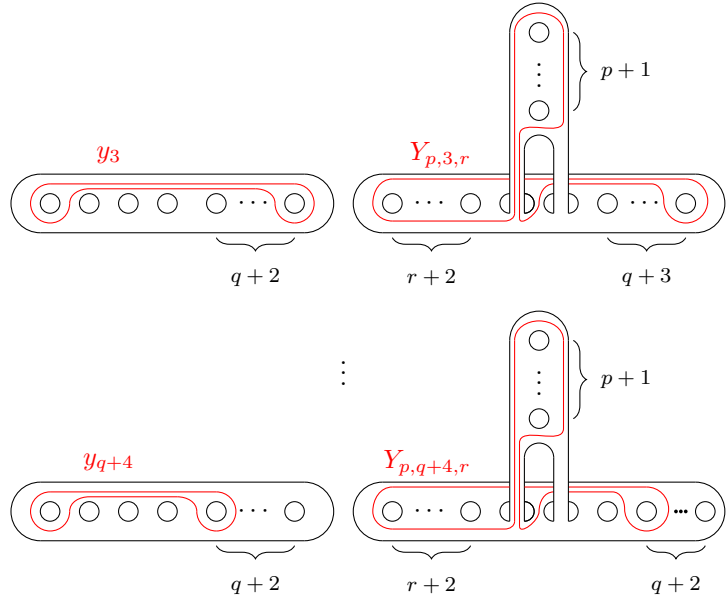
\begin{figure}[h]
\begin{tikzpicture}[scale=0.13]
\begin{scope}
\draw (7,0)--(34,0);
\draw (7,-6)--(34,-6);
\draw (7,0) arc (90:270:3);
\draw (34,0) arc (90:-90:3);
\draw (8,-3) circle (1);
\draw (12,-3) circle (1);

\draw (16,-3) circle (1);
\draw (20,-3) circle (1);

\draw (25,-3) circle (1);
\draw (29,-3) node[]{$\cdots$};

\draw (33,-3) circle (1);

\draw[red] (20,-1)--(8,-1);
\draw[red] (10,-3)to[out=up,in=left](12,-1.5)to[out=right,in=up](14,-3);
\draw[red] (8,-1) arc (90:360:2);
\draw[red] (20,-1) arc (90:-90:2) (20,-5)--(16,-5);
\draw[red] (16,-5) arc (270:180:2);
\draw[red] (14,2.) node {{$y_{q+5}$}};

\draw [decorate,decoration={brace,mirror,amplitude=5pt},xshift=0pt,yshift=0]
	(25,-6.5) -- (33,-6.5) node [black,midway,yshift=-15pt] 
	{\footnotesize $q+2$};

\end{scope}

\begin{scope}[shift={(35,0)}]

\draw (7,0)--(20,0) (21.5,0)--(24.5,0) (26,0)--(39,0);
\draw (7,-6)--(39,-6);
\draw (7,0) arc (90:270:3);
\draw (39,0) arc (90:-90:3);
\draw (8,-3) circle (1);
\draw (16,-3) circle (1);

\draw (12,-3) node{$\cdots$};
\draw (20,-2) arc (90:270:1);
\draw (21.5,-2) arc (90:-90:1);

\draw (21.5,-2) arc (90:-90:1);
\draw (20,-4)--(20,14.5);
\draw (21.5,-4)--(21.5,2.5);

\draw (21.5,2.5) arc (180:0:1.5);
\draw (20,14.5) arc (180:0:3);

\draw[red] (20.5,-4)--(20.5,14.5);
\draw[red] (21,-4)--(21,4);
\draw[red] (21,4) to [out=up, in=down] (25.5,5.5)-- (25.5,14.5);
\draw[red] (20.5,14.5) to [out=up, in=left] (23,16.5)
to [out=right,in=up] (25.5,14.5);
\draw[red](8,-.5)--(20,-.5)
(21.5,-.5)--(24.5,-.5)
(26,-.5)--(30,-.5)
;
\draw[red] (30,-.5) arc(90:-90:2.15);
\draw[red] (20.5,-4) to [out=down,in=right] (19.5,-4.8)--(8,-4.8);
\draw[red] (21,-4) to [out=down,in=left](22,-4.8)--(30,-4.8);
\draw[red] (8,-.5) to[out=left,in=up] (6,-3)
to [out=down,in=left](8,-4.8);

\draw[red] (17,2) node[left]{$Y_{p,q+5,r}$};


\draw (23, 6.5) circle (1);

\draw (23,11.) node{$\vdots$};

\draw (23, 14.5) circle (1);

\draw (23,18) node[]{};
\draw (24+2,-2) arc (90:-90:1);
\draw (24.5,-2) arc (90:270:1);

\draw (26,-4)--(26,14.5);
\draw (24.5,-4)--(24.5,2.5);
\draw [decorate,decoration={brace,mirror,amplitude=5pt},xshift=0pt,yshift=0]
	(26.5,6.5) -- (26.5,14.5) node [black,midway,xshift=20pt,align=center] 
	{\footnotesize $p+1$};


\draw (30,-3) circle (1);
\draw (34,-3) node{$\cdots$};
\draw (38,-3) circle (1);

\draw [decorate,decoration={brace,mirror,amplitude=5pt},xshift=0pt,yshift=0]
	(8,-6.5) -- (16,-6.5) node [black,midway,yshift=-15pt] 
	{\footnotesize $r+2$};
\draw [decorate,decoration={brace,mirror,amplitude=5pt},xshift=0pt,yshift=0]
	(30,-6.5) -- (38,-6.5) node [black,midway,yshift=-15pt] 
	{\footnotesize $q+3$};
\end{scope}

\end{tikzpicture}
\caption{$y_{q+5}$ and $Y_{p,q+5,r}=(t_{\alpha_{r+4}}^{-1}\cdot t_{\beta_1}^{-1}\cdot t_{\alpha_{r+4}})(Y_{p,q+5,r})
$}
\label{egfamily6}

\end{figure}
\begin{figure}[h]
\begin{tikzpicture}[scale=0.13]
\begin{scope}
\draw (7,0)--(34,0);
\draw (7,-6)--(34,-6);
\draw (7,0) arc (90:270:3);
\draw (34,0) arc (90:-90:3);
\draw (8,-3) circle (1);
\draw (12,-3) circle (1);

\draw (16,-3) circle (1);
\draw (20,-3) circle (1);

\draw (25,-3) circle (1);
\draw (29,-3) node[]{$\cdots$};

\draw (33,-3) circle (1);

\draw[red] (20,-1)--(8,-1);
\draw[red] (14,-3)to[out=up,in=left](16,-1.5)to[out=right,in=up](18,-3);
\draw[red] (8,-1) arc (90:270:2);
\draw[red] (20,-1) arc (90:-180:2) (8,-5)--(12,-5);
\draw[red] (12,-5) arc (270:360:2);
\draw[red] (14,2.) node {{$y_{q+6}$}};

\draw [decorate,decoration={brace,mirror,amplitude=5pt},xshift=0pt,yshift=0]
	(25,-6.5) -- (33,-6.5) node [black,midway,yshift=-15pt] 
	{\footnotesize $q+2$};

\end{scope}

\begin{scope}[shift={(35,0)}]

\draw (7,0)--(20,0) (21.5,0)--(24.5,0) (26,0)--(39,0);
\draw (7,-6)--(39,-6);
\draw (7,0) arc (90:270:3);
\draw (39,0) arc (90:-90:3);
\draw (8,-3) circle (1);
\draw (16,-3) circle (1);

\draw (12,-3) node{$\cdots$};
\draw (20,-2) arc (90:270:1);
\draw (21.5,-2) arc (90:-90:1);

\draw (21.5,-2) arc (90:-90:1);
\draw (20,-4)--(20,14.5);
\draw (21.5,-4)--(21.5,2.5);

\draw (21.5,2.5) arc (180:0:1.5);
\draw (20,14.5) arc (180:0:3);

\draw[red](21,-4.8) to [out=right,in=down] (23,-3)
to [out=up,in=left] (24.5,-1)(8,-.5)--(20,-.5);
\draw[red] (21.,-4.8)--(8,-4.8);
\draw[red] (21.5,-.5)--(24.5,-.5);
\draw[red] (8,-.5) to[out=left,in=up] (6,-3)
to [out=down,in=left](8,-4.8);
\draw[red] (26,-.5)--(30,-.5);
\draw[red] (30,-.5) arc (90:-180:2.15);

\draw[red] (27.85,-2.65) to[out=up,in=right] (26,-1);

\draw[red] (17,2) node[left]{$Y_{p,q+6,r}$};


\draw (23, 6.5) circle (1);

\draw (23,11.) node{$\vdots$};

\draw (23, 14.5) circle (1);

\draw (23,18) node[]{};
\draw (24+2,-2) arc (90:-90:1);
\draw (24.5,-2) arc (90:270:1);

\draw (26,-4)--(26,14.5);
\draw (24.5,-4)--(24.5,2.5);
\draw [decorate,decoration={brace,mirror,amplitude=5pt},xshift=0pt,yshift=0]
	(26.5,6.5) -- (26.5,14.5) node [black,midway,xshift=20pt,align=center] 
	{\footnotesize $p+1$};


\draw (30,-3) circle (1);
\draw (34,-3) node{$\cdots$};
\draw (38,-3) circle (1);

\draw [decorate,decoration={brace,mirror,amplitude=5pt},xshift=0pt,yshift=0]
	(8,-6.5) -- (16,-6.5) node [black,midway,yshift=-15pt] 
	{\footnotesize $r+2$};
\draw [decorate,decoration={brace,mirror,amplitude=5pt},xshift=0pt,yshift=0]
	(30,-6.5) -- (38,-6.5) node [black,midway,yshift=-15pt] 
	{\footnotesize $q+3$};
\end{scope}

\end{tikzpicture}
\caption{$y_{q+6}$ and $Y_{p,q+6,r}$}
\label{egfamily7}

\end{figure}
Let $Y_{p,i,r}$ a simple closed curve in $\Sigma_{\Gamma_{p,q,r}}$ be the image of $y_i$ which can be drawn as in Figure~\ref{egfamily3} to Figure~\ref{egfamily7}. Then we have
\begin{eqnarray*}
&&
\beta_2\cdots\beta_{p+2}\cdot\beta^{-1}\cdot\alpha_1\cdots\alpha_{r+2}\cdot\gamma_{r+2}^{-1}\cdot\gamma_{r+4}^{r+1}\cdot\gamma_{r+5}^p\cdot (t_{\beta_1}(\gamma_{r+3}))^{-1}
\\
&&
\cdot\gamma_{r+2}
\cdot
t_{\beta_1}(\alpha_{r+3})
\cdot
\underline{
\beta
(t_{\gamma}(\delta_{p+2}))^{q+3}
\alpha_{r+4}
\alpha_{r+5}\gamma_{r+5}\alpha_{r+6}\cdots\alpha_{r+q+7}\gamma_{r+q+7}
}
\\
&&
\cdot
(t_{\alpha_{r+4}}^{-1}\cdot t_{\delta_{p+2}}^{-1}\cdot t_{\gamma}^{-1}\cdot t_{\delta_{p+2}})(\gamma)
\cdot
(t_{\alpha_{r+4}}^{-1})(\beta_1)
\\
&=&
\beta_2\cdots\beta_{p+2}\cdot\beta^{-1}\cdot\alpha_1\cdots\alpha_{r+2}\cdot\gamma_{r+2}^{-1}\cdot\gamma_{r+4}^{r+1}\cdot\gamma_{r+5}^p\cdot (t_{\beta_1}(\gamma_{r+3}))^{-1}
\\
&&
\cdot\gamma_{r+2}
\cdot
t_{\beta_1}(\alpha_{r+3})
\cdot
Y_{p,1,r}
\cdots
Y_{p,q+6,r}
\cdot
(t_{\alpha_{r+4}}^{-1}\cdot t_{\delta_{p+2}}^{-1}\cdot t_{\gamma}^{-1}\cdot t_{\delta_{p+2}})(\gamma)
\cdot
(t_{\alpha_{r+4}}^{-1})(\beta_1)
\\
&=&
\beta_2\cdots\beta_{p+2}\cdot\beta^{-1}\cdot\alpha_1\cdots\alpha_{r+2}\cdot\gamma_{r+2}^{-1}\cdot\gamma_{r+4}^{r+1}\cdot\gamma_{r+5}^p
\\
&&
\cdot
(t_{\beta_1}(\gamma_{r+3}))^{-1}
\cdot\gamma_{r+2}
\cdot
t_{\beta_1}(\alpha_{r+3})
\cdot
(t_{\alpha_{r+4}}^{-1})(\beta_1)
\cdot
(t_{\alpha_{r+4}}^{-1}\cdot t_{\beta_1}^{-1}\cdot t_{\alpha_{r+4}})(Y_{p,1,r})
\\
&&
(t_{\alpha_{r+4}}^{-1}\cdot t_{\beta_1}^{-1}\cdot t_{\alpha_{r+4}})(Y_{p,1,r})
\cdots
(t_{\alpha_{r+4}}^{-1}\cdot t_{\beta_1}^{-1}\cdot t_{\alpha_{r+4}})(Y_{p,q+6,r})
\\
&&
\cdot
(t_{\alpha_{r+4}}^{-1}\cdot t_{\beta_1}^{-1} \cdot t_{\delta_{p+2}}^{-1}\cdot t_{\gamma}^{-1}\cdot t_{\delta_{p+2}})(\gamma)
\end{eqnarray*}
Note that 
\begin{eqnarray*}
&&\hspace{-2em}
\gamma_{r+2}
\cdot
t_{\beta_1}(\alpha_{r+3})
\cdot
(t_{\alpha_{r+4}}^{-1})(\beta_1)
\cdot
(t_{\alpha_{r+4}}^{-1}\cdot t_{\beta_1}^{-1}\cdot t_{\alpha_{r+4}})(Y_{p,1,r})
\\
&=&
t_{\beta_1}(\gamma_{r+2})
\cdot
t_{\beta_1}(\alpha_{r+3})
\cdot
t_{\beta_1}(\alpha_{r+4})
\cdot
t_{\beta_1}(\gamma_{r+4})
\\
&=&
t_{\beta_1}(\gamma_{r+3})
\cdot
t_{\beta_1}(a)
\cdot
t_{\beta_1}(b)
\end{eqnarray*}
where $a$ and $b$ are simple closed curves as in Figure~\ref{egfamily8} due to the lantern relation.
\begin{figure}[h]
\begin{tikzpicture}[scale=0.15]
\begin{scope}

\draw (7,0)--(20,0) (21.5,0)--(24.5,0) (26,0)--(39,0);
\draw (7,-6)--(39,-6);
\draw (7,0) arc (90:270:3);
\draw (39,0) arc (90:-90:3);
\draw (8,-3) circle (1);
\draw (16,-3) circle (1);

\draw (12,-3) node{$\cdots$};
\draw (20,-2) arc (90:270:1);
\draw (21.5,-2) arc (90:-90:1);

\draw (21.5,-2) arc (90:-90:1);
\draw (20,-4)--(20,14.5);
\draw (21.5,-4)--(21.5,2.5);

\draw (21.5,2.5) arc (180:0:1.5);
\draw (20,14.5) arc (180:0:3);

\draw[red] (20,-1) arc (90:270:2);
\draw[red] (26,-1) arc (90:-90:2);

\draw[red] (21.5,-1)--(24.5,-1);
\draw[red] (20,-5)--(26,-5);
\draw[red] (19,1.5) node[left]{$a$};

\draw (23, 6.5) circle (1);

\draw (23,11.) node{$\vdots$};

\draw (23, 14.5) circle (1);

\draw (23,18) node[]{};
\draw (24+2,-2) arc (90:-90:1);
\draw (24.5,-2) arc (90:270:1);

\draw (26,-4)--(26,14.5);
\draw (24.5,-4)--(24.5,2.5);
\draw [decorate,decoration={brace,mirror,amplitude=5pt},xshift=0pt,yshift=0]
	(26.5,6.5) -- (26.5,14.5) node [black,midway,xshift=20pt,align=center] 
	{\footnotesize $p+1$};


\draw (30,-3) circle (1);
\draw (34,-3) node{$\cdots$};
\draw (38,-3) circle (1);

\draw [decorate,decoration={brace,mirror,amplitude=5pt},xshift=0pt,yshift=0]
	(8,-6.5) -- (16,-6.5) node [black,midway,yshift=-15pt] 
	{\footnotesize $r+2$};
\draw [decorate,decoration={brace,mirror,amplitude=5pt},xshift=0pt,yshift=0]
	(30,-6.5) -- (38,-6.5) node [black,midway,yshift=-15pt] 
	{\footnotesize $q+3$};
\end{scope}

\begin{scope}[shift={(39,0)}]

\draw (7,0)--(20,0) (21.5,0)--(24.5,0) (26,0)--(39,0);
\draw (7,-6)--(39,-6);
\draw (7,0) arc (90:270:3);
\draw (39,0) arc (90:-90:3);
\draw (8,-3) circle (1);
\draw (16,-3) circle (1);

\draw (13,-3) node{$\cdots$};
\draw (20,-2) arc (90:270:1);
\draw (21.5,-2) arc (90:-90:1);

\draw (21.5,-2) arc (90:-90:1);
\draw (20,-4)--(20,14.5);
\draw (21.5,-4)--(21.5,2.5);

\draw (21.5,2.5) arc (180:0:1.5);
\draw (20,14.5) arc (180:0:3);

\draw[red] (8,-.5) arc (90:360:2.5);
\draw[red] (26,-.5) arc (90:-90:2.5);
\draw[red] (10.5,-3) to[out=up,in=left] (12.,-1)--(20,-1);
\draw[red] (8,-.5)--(20,-.5);
\draw[red] (21.5,-.5)--(24.5,-.5);
\draw[red] (25.25,-5.5)--(26,-5.5);
\draw[red] (25.25,-5.5) to[out=left,in=down] (23,-3) to[out=up,in=right](21.5,-1);
\draw[red] (19,1.5) node[left]{$b$};


\draw (23, 6.5) circle (1);

\draw (23,11.) node{$\vdots$};

\draw (23, 14.5) circle (1);

\draw (23,18) node[]{};
\draw (24+2,-2) arc (90:-90:1);
\draw (24.5,-2) arc (90:270:1);

\draw (26,-4)--(26,14.5);
\draw (24.5,-4)--(24.5,2.5);
\draw [decorate,decoration={brace,mirror,amplitude=5pt},xshift=0pt,yshift=0]
	(26.5,6.5) -- (26.5,14.5) node [black,midway,xshift=20pt,align=center] 
	{\footnotesize $p+1$};


\draw (30,-3) circle (1);
\draw (34,-3) node{$\cdots$};
\draw (38,-3) circle (1);

\draw [decorate,decoration={brace,mirror,amplitude=5pt},xshift=0pt,yshift=0]
	(8,-6.5) -- (16,-6.5) node [black,midway,yshift=-15pt] 
	{\footnotesize $r+2$};
\draw [decorate,decoration={brace,mirror,amplitude=5pt},xshift=0pt,yshift=0]
	(30,-6.5) -- (38,-6.5) node [black,midway,yshift=-15pt] 
	{\footnotesize $q+3$};
\end{scope}

\end{tikzpicture}
\caption{$a=t_{\beta_1}(a)$ and $b$ in $\Sigma_{\gamma_{p,q,r}}$}
\label{egfamily8}
\end{figure}
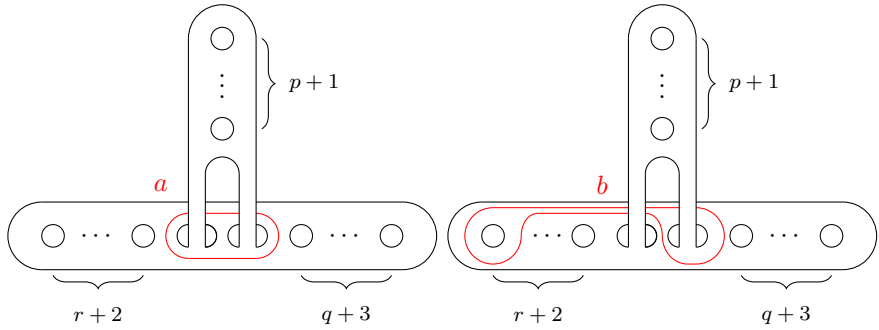
After a lantern substitution and cancelling $(t_{\beta_1}(t_{\gamma_{r+3}}))^{-1}$ with $t_{\beta_1}(t_{\gamma_{r+3}})$, we have
\begin{eqnarray*}
&&
\beta_2\cdots\beta_{p+2}\cdot\beta^{-1}\cdot\alpha_1\cdots\alpha_{r+2}\cdot\gamma_{r+2}^{-1}\cdot\gamma_{r+4}^{r+1}\cdot\gamma_{r+5}^p
\\
&&
\cdot
(t_{\beta_1}(\gamma_{r+3}))^{-1}
\cdot\gamma_{r+2}
\cdot
t_{\beta_1}(\alpha_{r+3})
\cdot
(t_{\alpha_{r+4}}^{-1})(\beta_1)
\cdot
(t_{\alpha_{r+4}}^{-1}\cdot t_{\beta_1}^{-1}\cdot t_{\alpha_{r+4}})(Y_{p,1,r})
\\
&&
\cdot
(t_{\alpha_{r+4}}^{-1}\cdot t_{\beta_1}^{-1}\cdot t_{\alpha_{r+4}})(Y_{p,2,r})
\cdots
(t_{\alpha_{r+4}}^{-1}\cdot t_{\beta_1}^{-1}\cdot t_{\alpha_{r+4}})(Y_{p,q+6,r})
\\
&&
\cdot
(t_{\alpha_{r+4}}^{-1}\cdot t_{\beta_1}^{-1} \cdot t_{\delta_{p+2}}^{-1}\cdot t_{\gamma}^{-1}\cdot t_{\delta_{p+2}})(\gamma)
\\
&=&
\beta_2\cdots\beta_{p+2}\cdot\beta^{-1}\cdot\alpha_1\cdots\alpha_{r+2}\cdot\gamma_{r+2}^{-1}\cdot\gamma_{r+4}^{r+1}\cdot\gamma_{r+5}^p
\cdot
t_{\beta_1}(a)
\cdot
t_{\beta_1}(b)
\\
&&
\cdot
(t_{\alpha_{r+4}}^{-1}\cdot t_{\beta_1}^{-1}\cdot t_{\alpha_{r+4}})(Y_{p,2,r})
\cdots
(t_{\alpha_{r+4}}^{-1}\cdot t_{\beta_1}^{-1}\cdot t_{\alpha_{r+4}})(Y_{p,q+6,r})
\\
&&
\cdot
(t_{\alpha_{r+4}}^{-1}\cdot t_{\beta_1}^{-1} \cdot t_{\delta_{p+2}}^{-1}\cdot t_{\gamma}^{-1}\cdot t_{\delta_{p+2}})(\gamma)
\\
&=&
\alpha_1\cdots\alpha_{r+2}
\cdot
t_{\beta_1}(a)
\cdot\gamma_{r+4}^{r+1}
\cdot
\gamma_{r+2}^{-1}
\cdot
t_{\beta_1}(b)
\\
&&
\cdot
(t_{\alpha_{r+4}}^{-1}\cdot t_{\beta_1}^{-1}\cdot t_{\alpha_{r+4}})(Y_{p,2,r})
\cdots
(t_{\alpha_{r+4}}^{-1}\cdot t_{\beta_1}^{-1}\cdot t_{\alpha_{r+4}})(Y_{p,q+4,r})
\\
&&
\cdot\beta_2\cdots\beta_{p+2}\cdot
Y_{p,q+5,r}
\cdot
\gamma_{r+5}^p
\cdot
\beta^{-1}
\\
&&
\cdot
(t_{\alpha_{r+4}}^{-1}\cdot t_{\beta_1}^{-1}\cdot t_{\alpha_{r+4}})(Y_{p,q+6,r})
\cdot
(t_{\alpha_{r+4}}^{-1}\cdot t_{\beta_1}^{-1} \cdot t_{\delta_{p+2}}^{-1}\cdot t_{\gamma}^{-1}\cdot t_{\delta_{p+2}})(\gamma)
\end{eqnarray*}
One can easily check that 
\begin{eqnarray*}\alpha_1\cdots\alpha_{r+2}
\cdot
t_{\beta_1}(a)
\cdot\gamma_{r+4}^{r+1}
&=&
Z_{p,q,1}\cdots Z_{p,q,r+3}
\\
\beta_2\cdots\beta_{p+2}\cdot
Y_{p,q+5,r}
\cdot
\gamma_{r+5}^p
&=&X_{1,q,r}\dots X_{p+2,q,r}
\end{eqnarray*}
for some simple closed curves $X_{i,q,r}$ and $Z_{p,q,j}$ with $X_{p+2,q,r}=\beta$ and $Z_{p,q,r+3}=\gamma_{r+2}$ due to the daisy relations. By performing  daisy substitutions and cancelling $X_{p+2,q,r}$ with $\beta^{-1}$ and $Z_{p,q,r+3}$ with $\gamma_{r+2}^{-1}$, we get a  monodromy factorization $W_{\Gamma_{p,q,r}}'$ whose length is $b_1(\Sigma_{\Gamma_{p,q,r}})$.
\subsection{Relations for (h) family}
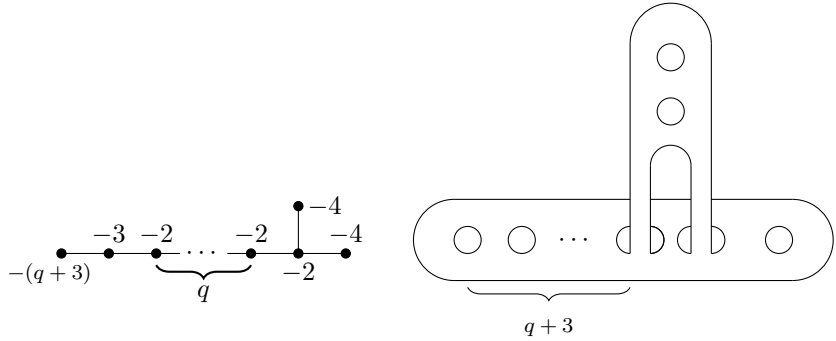
\begin{figure}[h]
\begin{tikzpicture}[scale=0.18]
\draw (7,0)--(20,0) (21.5,0)--(24.5,0) (26,0)--(32,0);
\draw (7,-6)--(32,-6);
\draw (7,0) arc (90:270:3);
\draw (30+2,0) arc (90:-90:3);
\draw (8,-3) circle (1);
\draw (12,-3) circle (1);
\draw (16,-3) node{$\cdots$};
\draw (20,-2) arc (90:270:1);
\draw (21.5,-2) arc (90:-90:1);

\draw (21.5,-2) arc (90:-90:1);
\draw (29+2,-3) circle (1);
\draw [decorate,decoration={brace,mirror,amplitude=5pt},xshift=0pt,yshift=0]
	(8,-6.5) -- (20,-6.5) node [black,midway,yshift=-15pt] 
	{\footnotesize $q+3$};

\draw (20,-4)--(20,11.5);
\draw (21.5,-4)--(21.5,2.5);

\draw (21.5,2.5) arc (180:0:1.5);
\draw (20,11.5) arc (180:0:3);

\draw (23, 6.5) circle (1);
\draw (23, 10.5) circle (1);

\draw (24+2,-2) arc (90:-90:1);
\draw (24.5,-2) arc (90:270:1);

\draw (26,-4)--(26,11.5);
\draw (24.5,-4)--(24.5,2.5);


\begin{scope}[shift={(-22,-4)},scale=3.5]
\node[bullet] at (0,0){};
\node[bullet] at (1,0){};
\node[bullet] at (2,0){};
\node[bullet] at (4,0){};
\node[bullet] at (5,0){};
\node[bullet] at (6,0){};

\node[bullet] at (5,1){};

\node[below] at (-0.25,0){\footnotesize{$-(q+3)$}};
\node[above] at (1,0){$-3$};
\node[above] at (2,0){$-2$};
\node[above] at (4,0){$-2$};
\node[below] at (5,0){$-2$};
\node[above] at (6,0){$-4$};

\node[right] at (5,1){$-4$};

\node at (3,0){$\cdots$};

\draw (0,0)--(2.5,0);
\draw (3.5,0)--(6,0);
\draw (5,0)--(5,1);

	\draw [thick,decorate,decoration={brace,mirror,amplitude=5pt},xshift=0pt,yshift=-7pt]
	(2,0) -- (4,0) node [black,midway,yshift=-11pt] 
	{$q$};

\end{scope}
\end{tikzpicture}

\caption{Resolution graph $\Gamma_q$ and generic fiber $\Sigma_{\Gamma_q}$ for (h) family}
\label{h}
\end{figure}
Let $\Gamma_q$ be a resolution graph of (h) family as in Figure~\ref{h}. Then the generic fiber for $X_{\Gamma_q}$ is $\Sigma_{q}$ as in Figure~\ref{h} and the global monodromy  of $X_{\Gamma_q}$ is given by
$$\beta_1\beta_2\beta_3\alpha_1\alpha_2\cdots\alpha_{q+2}\gamma_{q+2}\alpha_{q+3}\gamma_{q+3}^{q+1}\delta_3\gamma_{q+3}\alpha_{q+4}\alpha_{q+5}\gamma_{q+5}
$$
We rearrange the word using Hurwitz moves as follows.
\begin{eqnarray*}
&\phantom{0}& \hspace{-2 em} \beta_1\beta_2\beta_3\alpha_1\alpha_2\cdots\alpha_{q+2}\gamma_{q+2}\alpha_{q+3}\gamma_{q+3}^{q+1}\delta_3\gamma_{q+3}\alpha_{q+4}\alpha_{q+5}\gamma_{q+5}\\ 
&=&\hspace{-.55 em} \beta_1\beta_2\beta_3\alpha_1\alpha_2\cdots \alpha_{q+2}\gamma_{q+2}\gamma_{q+3}^{q+1}\alpha_{q+4}(t_{\alpha_{q+3}}\cdot t_{\alpha_{q+4}}^{-1})(\delta_3)\gamma_{q+3}\alpha_{q+3}\alpha_{q+5}\gamma_{q+5}
\\
&=&\hspace{-.55 em}\beta_2\beta_3\alpha_1\alpha_2\cdots\alpha_{q+2}\gamma_{q+2} (t_{\beta_1}(\gamma_{q+3}))^{q+1} t_{\beta_1}(\alpha_{q+4})\beta_1
(t_{\alpha_{q+3}}\cdot t_{\alpha_{q+4}}^{-1})(\delta_3)\gamma_{q+3}
\\
&&\hspace{-.55 em}\cdot\alpha_{q+3}\alpha_{q+5}\gamma_{q+5}
\\
&=&\hspace{-.55 em}\beta_2\beta_3\alpha_1\alpha_2\cdots\alpha_{q+2}\gamma_{q+2} (t_{\beta_1}(\gamma_{q+3}))^{q+1} t_{\beta_1}(\alpha_{q+4})\beta_1\gamma_{q+3}
(t_{\gamma_{q+3}}^{-1}\cdot t_{\alpha_{q+3}}\cdot t_{\alpha_{q+4}}^{-1})(\delta_3)
\\
&&\hspace{-.55 em}\cdot\alpha_{q+3}\alpha_{q+5}\gamma_{q+5}
\end{eqnarray*}
Let $\delta=(t_{\gamma_{q+3}}^{-1}\cdot t_{\alpha_{q+3}}\cdot t_{\alpha_{q+4}}^{-1})(\delta_3)$.

\begin{eqnarray*}
&\phantom{0}& \hspace{-2 em} \beta_2\beta_3\alpha_1\alpha_2\cdots\alpha_{q+2}\gamma_{q+2} (t_{\beta_1}(\gamma_{q+3}))^{q+1} t_{\beta_1}(\alpha_{q+4})\beta_1\gamma_{q+3}
\delta\alpha_{q+3}\alpha_{q+5}\gamma_{q+5}
\\
&=&\hspace{-.55 em}
\beta_2\beta_3\alpha_1\alpha_2\cdots\alpha_{q+2}\gamma_{q+2} (t_{\beta_1}(\gamma_{q+3}))^{q+1} t_{\beta_1}(\alpha_{q+4})t_{\beta_1}(\gamma_{q+3})
\beta_1 \delta \alpha_{q+3}\alpha_{q+5}\gamma_{q+5}
\\
&=&\hspace{-.55 em} 
\beta_2\beta_3\alpha_1\alpha_2\cdots\alpha_{q+2}\gamma_{q+2} (t_{\beta_1}(\gamma_{q+3}))^{q+1} t_{\beta_1}(\alpha_{q+4})t_{\beta_1}(\gamma_{q+3})
\delta t_\delta^{-1}(\beta_1)\alpha_{q+3}\alpha_{q+5}\gamma_{q+5}
\\
&=&\hspace{-.55 em} 
\beta_2\beta_3\alpha_1\alpha_2\cdots\alpha_{q+2}\gamma_{q+2} (t_{\beta_1}(\gamma_{q+3}))^{q+2} t_{\beta_1}(\alpha_{q+4})\delta t_\delta^{-1}(\beta_1)\alpha_{q+3}\alpha_{q+5}\gamma_{q+5}
\end{eqnarray*}
Now we introduce cancelling pair $\gamma_{q+4}\cdot \gamma_{q+4}^{-1}$ as follows.
$$\underline{\beta_2\beta_3\alpha_1\alpha_2\cdots\alpha_{q+2}\gamma_{q+2} (t_{\beta_1}(\gamma_{q+3}))^{q+2} t_{\beta_1}(\alpha_{q+4})\delta \cdot \textcolor{red}{\gamma_{q+4}}}
\cdot \textcolor{red}{\gamma_{q+4}^{-1}} t_\delta^{-1}(\beta_1)\alpha_{q+3}\alpha_{q+5}\gamma_{q+5}
$$
By taking a global conjugation of each monodromy with $\gamma_{q+3}$, the underlined part can be seen an embeeding of the linear plumbing $L_q$ in Figure~\ref{hfamily2}: The $\alpha_{i}$, $\beta{j}$ and $\gamma_{k}$ is unchanged under conjugation.  Note that $t_{\beta_1}(\gamma_{q+3})=t_{\gamma_{q+3}}^{-1}(\beta_1)$ since $\beta_1$ and $\gamma_{q+3}$ intersect geometrically once. And $(t_{\gamma_{q+3}} \cdot t_{\beta_1})(\alpha_{q+4})$ and $t_{\gamma_{q+3}}(\delta)$ can be drawn as in Figure~\ref{hfamily1}. 
\begin{figure}[h]
\begin{tikzpicture}[scale=0.19]
\begin{scope}
\draw (7,0)--(20,0) (21.5,0)--(24.5,0) (26,0)--(32,0);
\draw (7,-6)--(32,-6);
\draw (7,0) arc (90:270:3);
\draw (30+2,0) arc (90:-90:3);
\draw (8,-3) circle (1);
\draw (12,-3) circle (1);
\draw (16,-3) node{$\cdots$};
\draw (20,-2) arc (90:270:1);
\draw (21.5,-2) arc (90:-90:1);

\draw (21.5,-2) arc (90:-90:1);
\draw (29+2,-3) circle (1);
\draw [decorate,decoration={brace,mirror,amplitude=5pt},xshift=0pt,yshift=0]
	(8,-6.5) -- (20,-6.5) node [black,midway,yshift=-15pt] 
	{\footnotesize $q+3$};


\draw[red] (20.75,-4)--(20.75, 10.5);
\draw[red] (25.25,-4)--(25.25, 10.5);
\draw[red] (20.75,10.5) to [out=up, in=left] (23,12.5) to [out=right, in=up] (25.25, 10.5);

\draw[red] (20.75,-4) to [out=down, in= down] (18.25,-3);
\draw[red] (25.25,-4) to [out=down, in= down] (27.75,-3);

\draw[red] (18.25,-3) to [out=up, in=left] (20,-1.5);
\draw[red] (27.75,-3) to [out=up, in=right] (26,-1.5);
\draw[red] (21.5,-1.5)--(24.5,-1.5);

\draw[red] (20,6) node[left]{\footnotesize{$t_{\gamma_{q+3}}(\delta)$}};

\draw (20,-4)--(20,11.5);
\draw (21.5,-4)--(21.5,2.5);

\draw (21.5,2.5) arc (180:0:1.5);
\draw (20,11.5) arc (180:0:3);

\draw (23, 6.5) circle (1);
\draw (23, 10.5) circle (1);

\draw (24+2,-2) arc (90:-90:1);
\draw (24.5,-2) arc (90:270:1);

\draw (26,-4)--(26,11.5);
\draw (24.5,-4)--(24.5,2.5);
\end{scope}

\begin{scope}[shift={(35,0)}]

\draw[red] (20.75,-4)--(20.75, 3);
\draw[red] (25.25,-4)--(25.25, 3);
\draw[red] (20.75,3) to [out=up, in=left] (23,5) to [out=right, in=up] (25.25, 3);
\draw[red] (25.25,-4) to [out=down, in= down] (27.75,-3);

\draw[red] (20.75,-4) to[out=down,in=right] (18,-5)--(7.5,-5) to [out=left, in= down] (6,-3);
\draw[red] (6,-3) to [out=up, in= left] (7.5,-1.5)--(20,-1.5);

\draw[red] (27.75,-3) to [out=up, in=right] (26,-1.5);
\draw[red] (21.5,-1.5)--(24.5,-1.5);

\draw[red] (20,6) node[left]{\footnotesize{$(t_{\gamma_{q+3}}\cdot t_{\beta_1})(\alpha_{q+4})$}};

\draw (7,0)--(20,0) (21.5,0)--(24.5,0) (26,0)--(32,0);
\draw (7,-6)--(32,-6);
\draw (7,0) arc (90:270:3);
\draw (30+2,0) arc (90:-90:3);
\draw (8,-3) circle (1);
\draw (12,-3) circle (1);
\draw (16,-3) node{$\cdots$};
\draw (20,-2) arc (90:270:1);
\draw (21.5,-2) arc (90:-90:1);

\draw (21.5,-2) arc (90:-90:1);
\draw (29+2,-3) circle (1);
\draw [decorate,decoration={brace,mirror,amplitude=5pt},xshift=0pt,yshift=0]
	(8,-6.5) -- (20,-6.5) node [black,midway,yshift=-15pt] 
	{\footnotesize $q+3$};

\draw (20,-4)--(20,11.5);
\draw (21.5,-4)--(21.5,2.5);

\draw (21.5,2.5) arc (180:0:1.5);
\draw (20,11.5) arc (180:0:3);

\draw (23, 6.5) circle (1);
\draw (23, 10.5) circle (1);

\draw (24+2,-2) arc (90:-90:1);
\draw (24.5,-2) arc (90:270:1);

\draw (26,-4)--(26,11.5);
\draw (24.5,-4)--(24.5,2.5);
\end{scope}

\end{tikzpicture}
\caption{$t_{\gamma_{q+3}}(\delta)$ and $(t_{\gamma_{q+3}}\cdot t_{\beta_1})(\alpha_{q+4})$ in $\Sigma_{\gamma_q}$}
\label{hfamily1}
\end{figure}
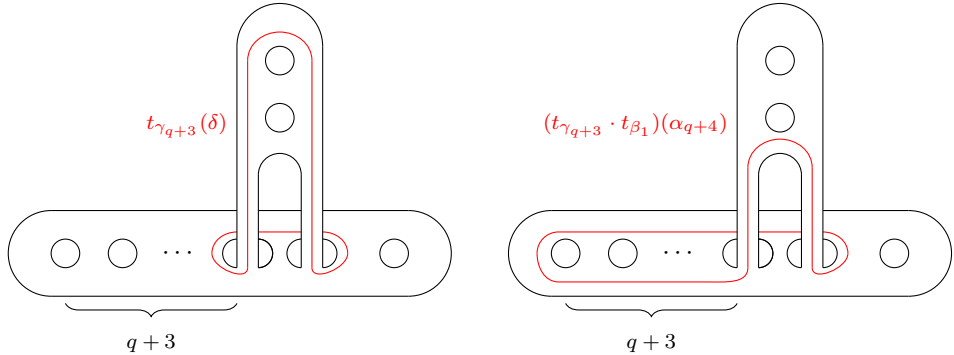
Let $L_q$ be a linear plumbing and $\Sigma_{L_q}$ be a generic fiber for $X_{L_q}$ as in Figure~\ref{hfamily2}. Then the monodromy for $X_{L_q}$ can be written as
$$
a_1a_2\cdots a_{q+2}b_{q+2}a_{q+3}a_{q+4}a_{q+5}b_{q+5}a_{q+6}^{q+2}b_{q+6}
$$
where $a_i$ is simple closed curve in $\Sigma_{L_q}$ enclosing $i$th hole and $b_j$ is simple closed curve in $\Sigma_{L_q}$ enclosing from the first to $i$th holes. Then there is a planar subsurface of $\Sigma_{\Gamma_q}$ which is diffeomorphic to $\Sigma_{L_q}$ so that the image of each curves are
\begin{eqnarray*}
a_i &\rightarrow&  \alpha_i \phantom{0000}(i=1,\dots, q+2)
\\
b_{q+2} &\rightarrow& \gamma_{q+2}
\\
a_{q+3} &\rightarrow& t_{\gamma_{q+3}}(\delta)
\\
a_{q+4} &\rightarrow& \beta_3
\\
a_{q+5} &\rightarrow& \beta_2
\\
b_{q+5} &\rightarrow& (t_{\gamma_{q+3}}\cdot t_{\beta_1})(\alpha_{q+4})
\\
a_{q+6} &\rightarrow& \beta_1
\\
b_{q+6} &\rightarrow& \gamma_{q+4}
\end{eqnarray*}
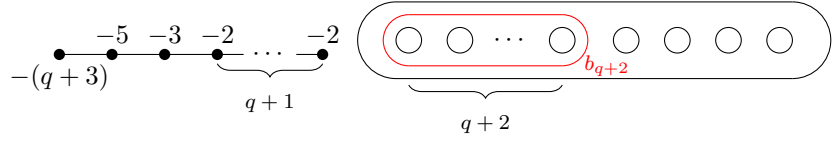
\begin{figure}[h]
\begin{tikzpicture}[scale=0.7]
\draw (0,0) node[below]{$-(q+3)$};

\draw (1,0) node[above]{$-5$};
\draw (2,0) node[above]{$-3$};
\draw (3,0) node[above]{$-2$};
\draw (5,0) node[above]{$-2$};

\draw (0,0)--(3.5,0) (4.5,0)--(5,0);
\draw (0,0 ) node[circle, fill, inner sep=1.5pt, black]{};
\draw (1,0 ) node[circle, fill, inner sep=1.5pt, black]{};
\draw (2,0 ) node[circle, fill, inner sep=1.5pt, black]{};

\draw (4,0) node{$\cdots$};
\draw (3,0 ) node[circle, fill, inner sep=1.5pt, black]{};
\draw (5,0 ) node[circle, fill, inner sep=1.5pt, black]{};
\draw (0,-.5) node{};

\draw [decorate,decoration={brace,mirror,amplitude=5pt},xshift=0pt,yshift=-5]
	(3,0) -- (5,0) node [black,midway,yshift=-15pt] 
	{\footnotesize $q+1$};

\draw (0,-1.5) node[]{};

\end{tikzpicture}
\begin{tikzpicture}
\begin{scope}[shift={(5,.5)},scale=0.17]
\draw (7,0)--(38,0);
\draw (7,-6)--(38,-6);
\draw (7,0) arc (90:270:3);
\draw (38,0) arc (90:-90:3);
\draw (8,-3) circle (1);
\draw (12,-3) circle (1);

\draw (16,-3) node{$\cdots$};
\draw (20,-3) circle (1);

\draw (25,-3) circle (1);
\draw (29,-3) circle (1);
\draw (33,-3) circle (1);
\draw (37,-3) circle (1);

\draw[red] (20,-1)--(8,-1);
\draw[red] (20,-5)--(8,-5);
\draw[red] (8,-1) arc (90:270:2);
\draw[red] (20,-1) arc (90:-90:2);
\draw[red] (23.5,-5.) node {\footnotesize{$b_{q+2}$}};

\draw [decorate,decoration={brace,mirror,amplitude=5pt},xshift=0pt,yshift=0]
	(8,-6.5) -- (20,-6.5) node [black,midway,yshift=-15pt] 
	{\footnotesize $q+2$};

\end{scope}

\end{tikzpicture}
\caption{Linear plumbing $L_q$ and the generic fiber $\Sigma_{L_q}$ for $X_{L_q}$}
\label{hfamily2}
\end{figure}
The linear plumbing $L_q$ can be rationally blowdown and the relation in the mapping class group of $\Sigma_{L_q}$ for the rational blowdown was given by Endo-Mark-Van Horn-Morris in 
~\cite{MR2783383}.
\begin{eqnarray*}
a_1a_2\cdots a_{q+2}b_{q+2}a_{q+3}a_{q+4}a_{q+5}b_{q+5}a_{q+6}^{q+2}
=y_1y_2\cdots y_{q+6}
\end{eqnarray*}
Let $Y_i$ a simple closed curve in $\Sigma_{\Gamma_q}$ be the image of $y_i$ in $\Sigma_{L_q}$ which can be drawn as in Figure~\ref{hfamily3} to Figure~\ref{hfamily7} and $X_i$ be $t_{\gamma_{q+3}}^{-1}(Y_i)$. 
\begin{figure}[h]
\begin{tikzpicture}[scale=0.15]
\begin{scope}
\draw (7,0)--(33,0);
\draw (7,-6)--(33,-6);
\draw (7,0) arc (90:270:3);
\draw (33,0) arc (90:-90:3);
\draw (8,-3) circle (1);
\draw (16,-3) circle (1);

\draw[red] (32,-1)--(20,-1);
\draw[red] (32,-5)--(20,-5);
\draw[red] (20,-1) arc (90:270:2);
\draw[red] (32,-1) arc (90:-90:2);
\draw[red] (26,-7.25) node {\footnotesize{$y_1$}};

\draw (12,-3) node{$\cdots$};
\draw (20,-3) circle (1);

\draw (24,-3) circle (1);
\draw (28,-3) circle (1);
\draw (32,-3) circle (1);

\draw [decorate,decoration={brace,mirror,amplitude=5pt},xshift=0pt,yshift=0]
	(8,-6.5) -- (16,-6.5) node [black,midway,yshift=-15pt] 
	{\footnotesize $q+2$};
\end{scope}

\begin{scope}[shift={(35,0)}]

\draw[red] (26,-5)--(20,-5);
\draw[red] (20,-1) arc (90:270:2);
\draw[red] (26,-1) arc (90:-90:2);
\draw[red] (24.5,-1)--(21.5,-1);
\draw[red] (25,-7.25) node {\footnotesize{$Y_1$}};

\draw (7,0)--(20,0) (21.5,0)--(24.5,0) (26,0)--(32,0);
\draw (7,-6)--(32,-6);
\draw (7,0) arc (90:270:3);
\draw (30+2,0) arc (90:-90:3);
\draw (8,-3) circle (1);
\draw (16,-3) circle (1);
\draw (12,-3) node{$\cdots$};
\draw (20,-2) arc (90:270:1);
\draw (21.5,-2) arc (90:-90:1);

\draw (21.5,-2) arc (90:-90:1);
\draw (29+2,-3) circle (1);
\draw [decorate,decoration={brace,mirror,amplitude=5pt},xshift=0pt,yshift=0]
	(8,-6.5) -- (16,-6.5) node [black,midway,yshift=-15pt] 
	{\footnotesize $q+2$};

\draw (20,-4)--(20,11.5);
\draw (21.5,-4)--(21.5,2.5);

\draw (21.5,2.5) arc (180:0:1.5);
\draw (20,11.5) arc (180:0:3);

\draw (23, 6.5) circle (1);
\draw (23, 10.5) circle (1);

\draw (24+2,-2) arc (90:-90:1);
\draw (24.5,-2) arc (90:270:1);

\draw (26,-4)--(26,11.5);
\draw (24.5,-4)--(24.5,2.5);
\end{scope}

\end{tikzpicture}
\caption{$y_1$ and $Y_1$}
\label{hfamily3}
\end{figure}
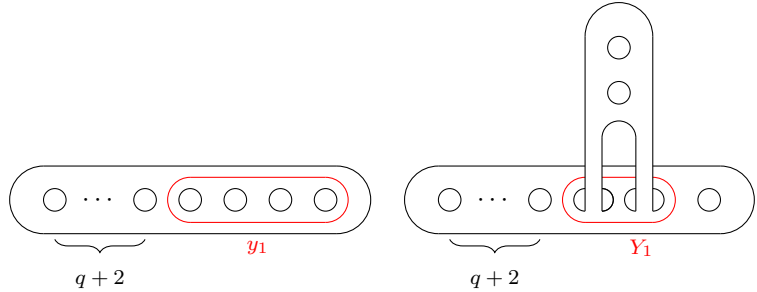
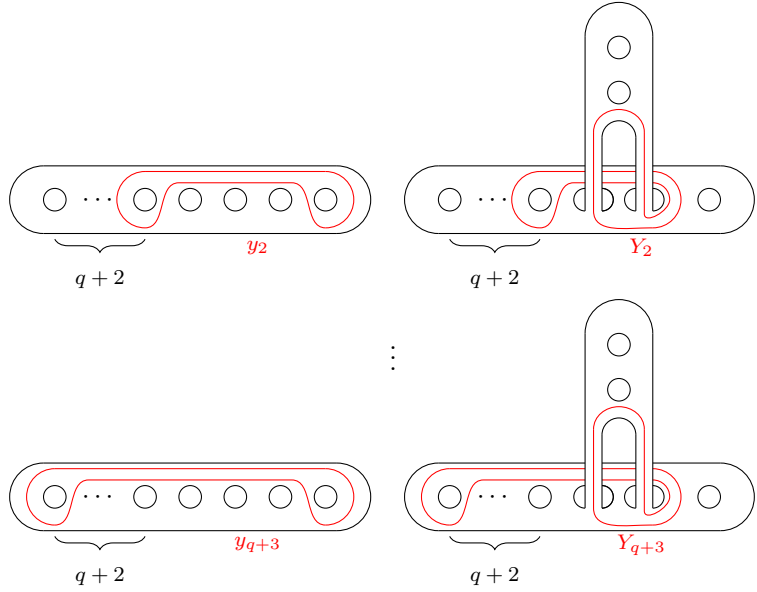
\begin{figure}[h]
\begin{tikzpicture}[scale=0.15]

\begin{scope}

\draw[red] (32,-.5)--(16,-.5);
\draw[red] (16,-.5) arc (90:270:2.5);
\draw[red] (32,-.5) arc (90:-90:2.5);
\draw[red] (16,-5.5) to[out=right,in=left] (19,-1.5)--(29,-1.5) to[out=right, in=left] (32,-5.5);

\draw[red] (26,-7.25) node {\footnotesize{$y_2$}};

\draw (7,0)--(33,0);
\draw (7,-6)--(33,-6);
\draw (7,0) arc (90:270:3);
\draw (33,0) arc (90:-90:3);
\draw (8,-3) circle (1);
\draw (16,-3) circle (1);

\draw (12,-3) node{$\cdots$};
\draw (20,-3) circle (1);

\draw (24,-3) circle (1);
\draw (28,-3) circle (1);
\draw (32,-3) circle (1);

\draw [decorate,decoration={brace,mirror,amplitude=5pt},xshift=0pt,yshift=0]
	(8,-6.5) -- (16,-6.5) node [black,midway,yshift=-15pt] 
	{\footnotesize $q+2$};
\end{scope}

\begin{scope}[shift={(35,0)}]

\draw[red] (16,-.5)--(20,-.5);
\draw[red] (21.5,-.5)--(24.5,-.5);
\draw[red] (21.5,-1.5)--(24.5,-1.5);

\draw[red] (26,-.5) arc (90:-90:2.5);

\draw[red] (16,-.5) arc (90:270:2.5);
\draw[red] (16,-5.5) to[out=right,in=left] (19,-1.5)--(20,-1.5);

\draw[red] (26,-5.5)--(25.75,-5.5) to [out=left,in=down](20.75,-4)--(20.75,3);
\draw[red] (25.25,3)--(25.25,-4.25) to[out=down,in=down] (27.5,-3) to[out=up,in=right] (26,-1.5);
\draw[red] (25,-7.25) node {\footnotesize{$Y_2$}};

\draw[red] (20.75,3) to [out=up, in=left] (23,5) to [out=right, in=up] (25.25, 3);




\draw (7,0)--(20,0) (21.5,0)--(24.5,0) (26,0)--(32,0);
\draw (7,-6)--(32,-6);
\draw (7,0) arc (90:270:3);
\draw (30+2,0) arc (90:-90:3);
\draw (8,-3) circle (1);
\draw (16,-3) circle (1);
\draw (12,-3) node{$\cdots$};
\draw (20,-2) arc (90:270:1);
\draw (21.5,-2) arc (90:-90:1);

\draw (21.5,-2) arc (90:-90:1);
\draw (29+2,-3) circle (1);
\draw [decorate,decoration={brace,mirror,amplitude=5pt},xshift=0pt,yshift=0]
	(8,-6.5) -- (16,-6.5) node [black,midway,yshift=-15pt] 
	{\footnotesize $q+2$};

\draw (20,-4)--(20,11.5);
\draw (21.5,-4)--(21.5,2.5);

\draw (21.5,2.5) arc (180:0:1.5);
\draw (20,11.5) arc (180:0:3);

\draw (23, 6.5) circle (1);
\draw (23, 10.5) circle (1);

\draw (24+2,-2) arc (90:-90:1);
\draw (24.5,-2) arc (90:270:1);

\draw (26,-4)--(26,11.5);
\draw (24.5,-4)--(24.5,2.5);
\end{scope}

\end{tikzpicture}
\begin{tikzpicture}[scale=0.15]
\begin{scope}

\draw[red] (32,-.5)--(8,-.5);
\draw[red] (8,-.5) arc (90:270:2.5);
\draw[red] (32,-.5) arc (90:-90:2.5);
\draw[red] (8,-5.5) to[out=right,in=left] (11,-1.5)--(29,-1.5) to[out=right, in=left] (32,-5.5);

\draw[red] (26,-7.25) node {\footnotesize{$y_{q+3}$}};

\draw (7,0)--(33,0);
\draw (7,-6)--(33,-6);
\draw (7,0) arc (90:270:3);
\draw (33,0) arc (90:-90:3);
\draw (8,-3) circle (1);
\draw (16,-3) circle (1);

\draw (12,-3) node{$\cdots$};
\draw (20,-3) circle (1);

\draw (24,-3) circle (1);
\draw (28,-3) circle (1);
\draw (32,-3) circle (1);

\draw [decorate,decoration={brace,mirror,amplitude=5pt},xshift=0pt,yshift=0]
	(8,-6.5) -- (16,-6.5) node [black,midway,yshift=-15pt] 
	{\footnotesize $q+2$};
\end{scope}

\begin{scope}[shift={(35,0)}]

\draw[red] (8,-.5)--(20,-.5);
\draw[red] (21.5,-.5)--(24.5,-.5);
\draw[red] (21.5,-1.5)--(24.5,-1.5);

\draw[red] (26,-.5) arc (90:-90:2.5);

\draw[red] (8,-.5) arc (90:270:2.5);
\draw[red] (8,-5.5) to[out=right,in=left] (11,-1.5)--(20,-1.5);

\draw[red] (26,-5.5)--(25.75,-5.5) to [out=left,in=down](20.75,-4)--(20.75,3);
\draw[red] (25.25,3)--(25.25,-4.25) to[out=down,in=down] (27.5,-3) to[out=up,in=right] (26,-1.5);
\draw[red] (25,-7.25) node {\footnotesize{$Y_{q+3}$}};

\draw[red] (20.75,3) to [out=up, in=left] (23,5) to [out=right, in=up] (25.25, 3);

\draw (7,0)--(20,0) (21.5,0)--(24.5,0) (26,0)--(32,0);
\draw (7,-6)--(32,-6);
\draw (7,0) arc (90:270:3);
\draw (30+2,0) arc (90:-90:3);
\draw (8,-3) circle (1);
\draw (16,-3) circle (1);
\draw (12,-3) node{$\cdots$};
\draw (20,-2) arc (90:270:1);
\draw (21.5,-2) arc (90:-90:1);

\draw (21.5,-2) arc (90:-90:1);
\draw (29+2,-3) circle (1);
\draw [decorate,decoration={brace,mirror,amplitude=5pt},xshift=0pt,yshift=0]
	(8,-6.5) -- (16,-6.5) node [black,midway,yshift=-15pt] 
	{\footnotesize $q+2$};

\draw (20,-4)--(20,11.5);
\draw (21.5,-4)--(21.5,2.5);

\draw (21.5,2.5) arc (180:0:1.5);
\draw (20,11.5) arc (180:0:3);

\draw (23, 6.5) circle (1);
\draw (23, 10.5) circle (1);

\draw (24+2,-2) arc (90:-90:1);
\draw (24.5,-2) arc (90:270:1);

\draw (26,-4)--(26,11.5);
\draw (24.5,-4)--(24.5,2.5);
\draw (3,10) node[]{$\vdots$};

\end{scope}

\end{tikzpicture}

\caption{$y_i$ and $Y_i$ for $(i=2,\dots, q+3)$}
\label{hfamily4}
\end{figure}
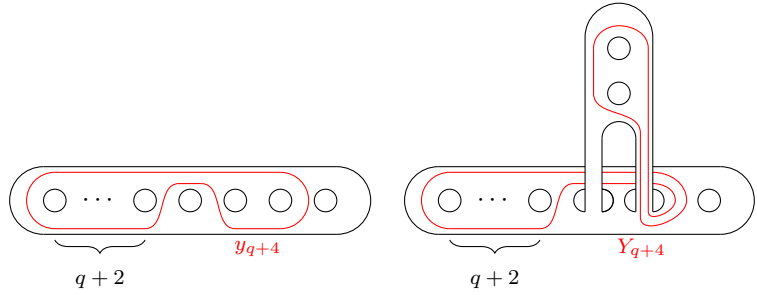
\begin{figure}[h]
\begin{tikzpicture}[scale=0.15]
\begin{scope}
\draw (7,0)--(33,0);
\draw (7,-6)--(33,-6);
\draw (7,0) arc (90:270:3);
\draw (33,0) arc (90:-90:3);
\draw (8,-3) circle (1);
\draw (16,-3) circle (1);

\draw[red] (28,-.5)--(8,-.5);
\draw[red] (8,-.5) arc (90:270:2.5);
\draw[red] (28,-.5) arc (90:-90:2.5);
\draw[red] (8,-5.5)--(16,-5.5) to [out=right,in=left] (19,-1.5)--(21,-1.5);
\draw[red] (21,-1.5) to[out=right, in=left] (24,-5.5)--(28,-5.5);

\draw[red] (26,-7.25) node {\footnotesize{$y_{q+4}$}};

\draw (12,-3) node{$\cdots$};
\draw (20,-3) circle (1);

\draw (24,-3) circle (1);
\draw (28,-3) circle (1);
\draw (32,-3) circle (1);

\draw [decorate,decoration={brace,mirror,amplitude=5pt},xshift=0pt,yshift=0]
	(8,-6.5) -- (16,-6.5) node [black,midway,yshift=-15pt] 
	{\footnotesize $q+2$};
\end{scope}

\begin{scope}[shift={(35,0)}]

\draw[red] (8,-.5)--(20,-.5);
\draw[red] (21.5,-.5)--(24.5,-.5);
\draw[red] (21.5,-1.5)--(24.5,-1.5);

\draw[red] (8,-.5) arc (90:270:2.5);
\draw[red] (16,-5.5) to[out=right,in=left] (19,-1.5)--(20,-1.5);

\draw[red] (8,-5.5)--(16,-5.5);


\draw[red] (25,-7.25) node {\footnotesize{$Y_{q+4}$}};

\draw[red] (20.75,10.5) to [out=up, in=left] (23,12.5) to [out=right, in=up] (25.6, 10.5)--(25.6,-4) to [out=down,in=down] (28,-3) to [out=up,in=right] (26,-1.5);

\draw[red] (20.75,10.5)--(20.75,6.5) to [out=down,in=up] (24.9,3)--(24.9,-4.5) to [out=down,in=down] (29,-3)to [out=up,in=right] (26,-.5);
\draw (7,0)--(20,0) (21.5,0)--(24.5,0) (26,0)--(32,0);
\draw (7,-6)--(32,-6);
\draw (7,0) arc (90:270:3);
\draw (30+2,0) arc (90:-90:3);
\draw (8,-3) circle (1);
\draw (16,-3) circle (1);
\draw (12,-3) node{$\cdots$};
\draw (20,-2) arc (90:270:1);
\draw (21.5,-2) arc (90:-90:1);

\draw (21.5,-2) arc (90:-90:1);
\draw (29+2,-3) circle (1);
\draw [decorate,decoration={brace,mirror,amplitude=5pt},xshift=0pt,yshift=0]
	(8,-6.5) -- (16,-6.5) node [black,midway,yshift=-15pt] 
	{\footnotesize $q+2$};

\draw (20,-4)--(20,11.5);
\draw (21.5,-4)--(21.5,2.5);

\draw (21.5,2.5) arc (180:0:1.5);
\draw (20,11.5) arc (180:0:3);

\draw (23, 6.5) circle (1);
\draw (23, 10.5) circle (1);

\draw (24+2,-2) arc (90:-90:1);
\draw (24.5,-2) arc (90:270:1);

\draw (26,-4)--(26,11.5);
\draw (24.5,-4)--(24.5,2.5);
\end{scope}

\end{tikzpicture}
\caption{$y_{q+4}$ and $Y_{q+4}$}
\label{hfamily5}
\end{figure}
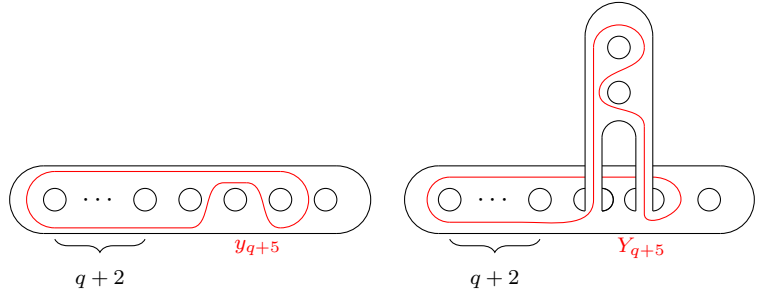
\begin{figure}
\begin{tikzpicture}[scale=0.15]

\begin{scope}

\draw[red] (28,-.5)--(8,-.5);
\draw[red] (8,-.5) arc (90:270:2.5);
\draw[red] (28,-.5) arc (90:-90:2.5);
\draw[red] (8,-5.5)--(20,-5.5) to [out=right,in=left] (23,-1.5)--(25,-1.5);
\draw[red] (25,-1.5) to[out=right, in=left] (28,-5.5);

\draw[red] (26,-7.25) node {\footnotesize{$y_{q+5}$}};

\draw (7,0)--(33,0);
\draw (7,-6)--(33,-6);
\draw (7,0) arc (90:270:3);
\draw (33,0) arc (90:-90:3);
\draw (8,-3) circle (1);
\draw (16,-3) circle (1);

\draw (12,-3) node{$\cdots$};
\draw (20,-3) circle (1);

\draw (24,-3) circle (1);
\draw (28,-3) circle (1);
\draw (32,-3) circle (1);

\draw [decorate,decoration={brace,mirror,amplitude=5pt},xshift=0pt,yshift=0]
	(8,-6.5) -- (16,-6.5) node [black,midway,yshift=-15pt] 
	{\footnotesize $q+2$};
\end{scope}

\begin{scope}[shift={(35,0)}]

\draw[red] (8,-1)--(20,-1);
\draw[red] (21.5,-1.)--(24.5,-1.);
\draw[red] (25.25,3.5)--(25.25,3);

\draw[red] (8,-1) arc (90:270:2);
\draw[red] (8,-5)-- (14.375,-5) to [out=right, in=down](20.75,-3)--(20.75,10.5);

\draw[red] (25.25,3)--(25.25,-4.25) to[out=down,in=down] (28.5,-3) to[out=up,in=right] (26,-1.);
\draw[red] (25,-7.25) node {\footnotesize{$Y_{q+5}$}};
 
\draw[red] (25.25,10.5) to [out=down,in=up] (21.25,6.5) to[out=down,in=up] (25.25,3.5);

\draw[red] (20.75,10.5) to [out=up, in=left] (23,12.5) to [out=right, in=up] (25.25, 10.5);




\draw (7,0)--(20,0) (21.5,0)--(24.5,0) (26,0)--(32,0);
\draw (7,-6)--(32,-6);
\draw (7,0) arc (90:270:3);
\draw (30+2,0) arc (90:-90:3);
\draw (8,-3) circle (1);
\draw (16,-3) circle (1);
\draw (12,-3) node{$\cdots$};
\draw (20,-2) arc (90:270:1);
\draw (21.5,-2) arc (90:-90:1);

\draw (21.5,-2) arc (90:-90:1);
\draw (29+2,-3) circle (1);
\draw [decorate,decoration={brace,mirror,amplitude=5pt},xshift=0pt,yshift=0]
	(8,-6.5) -- (16,-6.5) node [black,midway,yshift=-15pt] 
	{\footnotesize $q+2$};

\draw (20,-4)--(20,11.5);
\draw (21.5,-4)--(21.5,2.5);

\draw (21.5,2.5) arc (180:0:1.5);
\draw (20,11.5) arc (180:0:3);

\draw (23, 6.5) circle (1);
\draw (23, 10.5) circle (1);

\draw (24+2,-2) arc (90:-90:1);
\draw (24.5,-2) arc (90:270:1);

\draw (26,-4)--(26,11.5);
\draw (24.5,-4)--(24.5,2.5);
\end{scope}

\end{tikzpicture}
\caption{$y_{q+5}$ and $Y_{q+5}$}
\label{hfamily6}
\end{figure}
\begin{figure}[h]
\begin{tikzpicture}[scale=0.15]
\begin{scope}

\draw[red] (8,-1)--(24,-1);
\draw[red] (8,-1) arc (90:270:2);
\draw[red] (24,-1) arc (90:-90:2);
\draw[red] (8,-5)--(24,-5);

\draw[red] (26,-7.25) node {\footnotesize{$y_{q+6}$}};

\draw (7,0)--(33,0);
\draw (7,-6)--(33,-6);
\draw (7,0) arc (90:270:3);
\draw (33,0) arc (90:-90:3);
\draw (8,-3) circle (1);
\draw (16,-3) circle (1);

\draw (12,-3) node{$\cdots$};
\draw (20,-3) circle (1);

\draw (24,-3) circle (1);
\draw (28,-3) circle (1);
\draw (32,-3) circle (1);

\draw [decorate,decoration={brace,mirror,amplitude=5pt},xshift=0pt,yshift=0]
	(8,-6.5) -- (16,-6.5) node [black,midway,yshift=-15pt] 
	{\footnotesize $q+2$};
\end{scope}

\begin{scope}[shift={(35,0)}]

\draw[red] (8,-1)--(20,-1);
\draw[red] (21.5,-1.)--(24.5,-1.);
\draw[red] (25.25,6.5)--(25.25,3);

\draw[red] (8,-1) arc (90:270:2);
\draw[red] (8,-5)-- (14.375,-5) to [out=right, in=down](20.75,-3)--(20.75,6.5);

\draw[red] (25.25,3)--(25.25,-4.25) to[out=down,in=down] (28.5,-3) to[out=up,in=right] (26,-1.);
\draw[red] (25,-7.25) node {\footnotesize{$Y_{q+6}$}};

\draw[red] (20.75,6.5) to [out=up, in=left] (23,8.5) to [out=right, in=up] (25.25, 6.5);

\draw (7,0)--(20,0) (21.5,0)--(24.5,0) (26,0)--(32,0);
\draw (7,-6)--(32,-6);
\draw (7,0) arc (90:270:3);
\draw (30+2,0) arc (90:-90:3);
\draw (8,-3) circle (1);
\draw (16,-3) circle (1);
\draw (12,-3) node{$\cdots$};
\draw (20,-2) arc (90:270:1);
\draw (21.5,-2) arc (90:-90:1);

\draw (21.5,-2) arc (90:-90:1);
\draw (29+2,-3) circle (1);
\draw [decorate,decoration={brace,mirror,amplitude=5pt},xshift=0pt,yshift=0]
	(8,-6.5) -- (16,-6.5) node [black,midway,yshift=-15pt] 
	{\footnotesize $q+2$};

\draw (20,-4)--(20,11.5);
\draw (21.5,-4)--(21.5,2.5);

\draw (21.5,2.5) arc (180:0:1.5);
\draw (20,11.5) arc (180:0:3);

\draw (23, 6.5) circle (1);
\draw (23, 10.5) circle (1);

\draw (24+2,-2) arc (90:-90:1);
\draw (24.5,-2) arc (90:270:1);

\draw (26,-4)--(26,11.5);
\draw (24.5,-4)--(24.5,2.5);
\end{scope}

\end{tikzpicture}
\caption{$y_{q+6}$ and $Y_{q+6}$}
\label{hfamily7}
\end{figure}
Then we have
\begin{eqnarray*}
&\phantom{0}& \hspace{-2 em}\underline{\beta_2\beta_3\alpha_1\alpha_2\cdots\alpha_{q+2}\gamma_{q+2} (t_{\beta_1}(\gamma_{q+3}))^{q+2} t_{\beta_1}(\alpha_{q+4})\delta \gamma_{q+4}}\cdot \gamma_{q+4}^{-1} t_\delta^{-1}(\beta_1)\alpha_{q+3}\alpha_{q+5}\gamma_{q+5}\\
&=& \hspace{-.55 em}t_{\gamma_{q+3}^{-1}}(Y_1)t_{\gamma_{q+3}^{-1}}(Y_2)\cdots t_{\gamma_{q+3}^{-1}}(Y_{q+6})\gamma_{q+4}^{-1} t_\delta^{-1}(\beta_1)\alpha_{q+3}\alpha_{q+5}\gamma_{q+5}\\
&=&\hspace{-.55 em}X_1\cdots X_{q+6}t_\delta^{-1}(\beta_1)\alpha_{q+3}\alpha_{q+5}\gamma_{q+5}\gamma_{q+4}^{-1}\\
&=&\hspace{-.55 em}X_1\cdots X_{q+4}(t_{X_{q+5}}\cdot t_{X_{q+6}}\cdot t_\delta^{-1})(\beta_1) X_{q+5}X_{q+6} \alpha_{q+3}\alpha_{q+5}\gamma_{q+5}\gamma_{q+4}^{-1}\\
&=&\hspace{-.55 em}X_1\cdots X_{q+4}(t_{X_{q+5}}\cdot t_{X_{q+6}}\cdot t_\delta^{-1})(\beta_1)  \alpha_{q+3}\alpha_{q+5}\gamma_{q+5}\gamma_{q+4}^{-1} t_{\alpha_{q+3}^{-1}}(X_{q+5})t_{\alpha_{q+3}^{-1}}(X_{q+6})
\end{eqnarray*}
One can easily see that $(t_{X_{q+5}}\cdot t_{X_{q+6}}\cdot t_\delta^{-1})(\beta_1)  \alpha_{q+3}\alpha_{q+5}\gamma_{q+5}=Z_{q}\cdot W_q\cdot\gamma_{q+4}$ for some simple closed curve $W_q$ and $Z_q$ due to the lantern relation (See Figure~\ref{hfamily5}). By performing a lantern substitution and cancelling $\gamma_{q+4}$ with $\gamma_{q+4}^{-1}$, we get a  monodromy factorization $W_{\Gamma_q}'$ whose length is $b_1(\Sigma_{\Gamma_{q}})$.
$$X_1\cdots X_{q+4}
\cdot
Z_q
\cdot
W_q
\cdot
t_{\alpha_{q+3}^{-1}}(X_{q+5})
\cdot
t_{\alpha_{q+3}^{-1}}(X_{q+6})$$
\begin{figure}[h]
\begin{tikzpicture}[scale=0.15]
\begin{scope}
\draw[red] (8,-5.5)--(26,-5.5);
\draw[red] (8,-1) arc (90:270:2.25);
\draw[red] (26,-1) arc (90:-90:2.25);
\draw[red] (8,-1)--(16,-1);
\draw[red] (16,-1) to [out=right,in=left] (19,-4.5);
\draw[red] (24.5,-1) to [out=left,in=right] (21.5,-4.5);
\draw[red] (21.5,-4.5)--(19,-4.5);
\draw[red] (27,-7.25) node {\footnotesize{$(t_{X_{q+5}}\cdot t_{X_{q+6}}\cdot t_\delta^{-1})(\beta_1)$}};

\draw (7,0)--(20,0) (21.5,0)--(24.5,0) (26,0)--(32,0);
\draw (7,-6)--(32,-6);
\draw (7,0) arc (90:270:3);
\draw (30+2,0) arc (90:-90:3);
\draw (8,-3) circle (1);
\draw (16,-3) circle (1);
\draw (12,-3) node{$\cdots$};
\draw (20,-2) arc (90:270:1);
\draw (21.5,-2) arc (90:-90:1);

\draw (21.5,-2) arc (90:-90:1);
\draw (29+2,-3) circle (1);
\draw [decorate,decoration={brace,mirror,amplitude=5pt},xshift=0pt,yshift=0]
	(8,-6.5) -- (16,-6.5) node [black,midway,yshift=-15pt] 
	{\footnotesize $q+2$};

\draw (20,-4)--(20,11.5);
\draw (21.5,-4)--(21.5,2.5);

\draw (21.5,2.5) arc (180:0:1.5);
\draw (20,11.5) arc (180:0:3);

\draw (23, 6.5) circle (1);
\draw (23, 10.5) circle (1);

\draw (24+2,-2) arc (90:-90:1);
\draw (24.5,-2) arc (90:270:1);

\draw (26,-4)--(26,11.5);
\draw (24.5,-4)--(24.5,2.5);
\end{scope}

\end{tikzpicture}
\caption{$(t_{X_{q+5}}\cdot t_{X_{q+6}}\cdot t_\delta^{-1})(\beta_1)$}
\label{hfamily5}
\end{figure}
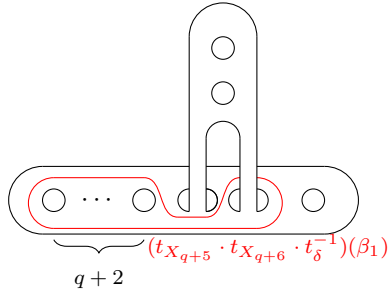

\subsection{Relations for (i) family}

\begin{figure}[h]
\begin{tikzpicture}[scale=0.15]

\draw (7,0)--(20,0) (21.5,0)--(24.5,0) (26,0)--(39,0);
\draw (7,-6)--(39,-6);
\draw (7,0) arc (90:270:3);
\draw (39,0) arc (90:-90:3);
\draw (7,-3) circle (1);
\draw (10,-3) circle (1);
\draw (13,-3) circle (1);
\draw (16,-3) circle (1);

\draw (20,-2) arc (90:270:1);
\draw (21.5,-2) arc (90:-90:1);

\draw (21.5,-2) arc (90:-90:1);
\draw (20,-4)--(20,7.5);
\draw (21.5,-4)--(21.5,2.5);

\draw (21.5,2.5) arc (180:0:1.5);
\draw (20,7.5) arc (180:0:3);

\draw (23, 6.5) circle (1);

\draw (23,18) node[]{};
\draw (24+2,-2) arc (90:-90:1);
\draw (24.5,-2) arc (90:270:1);

\draw (26,-4)--(26,7.5);
\draw (24.5,-4)--(24.5,2.5);


\draw (30,-3) circle (1);
\draw (34,-3) node{$\cdots$};
\draw (38,-3) circle (1);

\draw [decorate,decoration={brace,mirror,amplitude=5pt},xshift=0pt,yshift=0]
	(30,-6.5) -- (38,-6.5) node [black,midway,yshift=-15pt] 
	{\footnotesize $q$};

\begin{scope}[shift={(-26,-4)},scale=4.]
\node[bullet] at (0,0){};
\node[bullet] at (1,0){};
\node[bullet] at (2,0){};
\node[bullet] at (4,0){};
\node[bullet] at (5,0){};
\node[bullet] at (6,0){};

\node[bullet] at (1,1){};

\node[below] at (-0.,0){$-6$};
\node[below] at (1,0){$-2$};
\node[below] at (2,0){$-2$};
\node[below] at (4,0){$-2$};
\node[below] at (5,0){$-2$};
\node[above] at (6.25,0){\footnotesize{$-(q+3)$}};

\node[right] at (1,1){$-3$};

\node at (3,0){$\cdots$};

\draw (0,0)--(2.5,0);
\draw (3.5,0)--(6,0);
\draw (1,0)--(1,1);

	\draw [thick,decorate,decoration={brace,amplitude=5pt},xshift=0pt,yshift=7pt]
	(2,0) -- (4,0) node [black,midway,yshift=11pt] 
	{$q$};

\end{scope}

\end{tikzpicture}

\caption{Resolution graph $\Gamma_q$ and generic fiber $\Sigma_{\gamma_q}$ for (i) family}
\label{i}
\end{figure}
Let $\Gamma_q$ be a resolution graph of (i) family as in Figure~\ref{i}. Then the generic fiber for $X_{\Gamma_q}$ is $\Sigma_{q}$ as in Figure~\ref{i} and the global monodromy  of $X_{\Gamma_q}$ is given by
$$
\beta_1
\beta_2
\alpha_1
\alpha_2
\alpha_3
\alpha_4
\alpha_5
\gamma_5
\delta_2
\gamma_5^{q+2}
\alpha_6
\cdots
\alpha_{q+6}
\gamma_{q+6}
$$
We introduce a cancelling pair $\delta_{2}\cdot\delta_{2}^{-1}$ and rearrange the word using Hurwitz moves and braid relations $\delta_2\cdot\alpha_5\cdot\delta_2=\alpha_5\cdot\delta_2\cdot\alpha_5$ and $\delta_2\cdot\gamma_5\cdot\delta_2=\gamma_5\cdot\delta_2\cdot\gamma_5$.
\begin{eqnarray*}
&&
\beta_1
\beta_2
\cdot
\textcolor{red}{
\delta_2^{-1}
\cdot
\delta_2
}
\cdot
\alpha_1
\alpha_2
\alpha_3
\alpha_4
\alpha_5
\gamma_5
\delta_2
\gamma_5^{q+2}
\alpha_6
\cdots
\alpha_{q+6}
\gamma_{q+6}
\\
&=&
\beta_1
\beta_2
\delta_2^{-1}
\cdot
\alpha_1
\alpha_2
\alpha_3
\alpha_4
\cdot
\underline{
\delta_2
\alpha_5
\gamma_5
\delta_2
\gamma_5^{q+2}
}
\alpha_6
\cdots
\alpha_{q+6}
\gamma_{q+6}
\\
&=&
\beta_1
\beta_2
\delta_2^{-1}
\cdot
\alpha_1
\alpha_2
\alpha_3
\alpha_4
\cdot
\underline{
\delta_2
\alpha_5
\delta_2
\gamma_5
\delta_2
\gamma_5^{q+1}
}
\alpha_6
\cdots
\alpha_{q+6}
\gamma_{q+6}
\\
&=&
\beta_1
\beta_2
\delta_2^{-1}
\cdot
\alpha_1
\alpha_2
\alpha_3
\alpha_4
\cdot
\underline{
\alpha_5
\delta_2
\alpha_5
\gamma_5
\delta_2
\gamma_5^{q+1}
}
\alpha_6
\cdots
\alpha_{q+6}
\gamma_{q+6}
\\
&&
\phantom{000000000000000000000000000}
\vdots
\\
&=&
\beta_1
\beta_2
\delta_2^{-1}
\cdot
\alpha_1
\alpha_2
\alpha_3
\alpha_4
\cdot
\underline{
\alpha_5^{q+1}
\delta_2
\alpha_5
\gamma_5
\delta_2
\gamma_5
}
\alpha_6
\cdots
\alpha_{q+6}
\gamma_{q+6}
\\
&=&
\beta_1
\beta_2
\delta_2^{-1}
\cdot
\alpha_1
\alpha_2
\alpha_3
\alpha_4
\cdot
\alpha_5^{q+2}
\cdot
(t_{\alpha_5}^{-1})(\delta_2)
\cdot
\gamma_5^{2}
\cdot
(t_{\gamma_5}^{-1})(\delta_2)
\cdot
\alpha_6
\cdots
\alpha_{q+6}
\gamma_{q+6}
\\
&=&
\beta_1
\beta_2
\delta_2^{-1}
\underline{
\alpha_1
\alpha_2
\alpha_3
\alpha_4
\alpha_5^{q+2}
\gamma_5^{2}
\alpha_6
\cdots
\alpha_{q+6}
\gamma_{q+6}
}
\cdot
(t_{\alpha_6}^{-1}\cdot t_{\gamma_5}^{-2}\cdot t_{\alpha_5}^{-1})(\delta_2)
\cdot
(t_{\alpha_6}^{-1}\cdot t_{\gamma_5}^{-1})(\delta_2)
\end{eqnarray*}
The underlined part can be seen an embedding of the linear plumbing $L_q$ in Figure~\ref{i1}: Let $L_q$ be a linear plumbing and $\Sigma_{L_q}$ be a generic fiber for $X_{L_q}$ as in Figure~\ref{i1}. Then the monodromy for $X_{L_q}$ can be written as
$$
a_1a_2a_3a_4a_5^{q+2}b_5^2a_6\cdots a_{q+6}b_{q+6}
$$
where $a_i$ is simple closed curve in $\Sigma_{L_q}$ enclosing $i$th hole and $b_j$ is simple closed curve in $\Sigma_{L_q}$ enclosing from the first to $j$th holes. Then there is obvious planar subsurface of $\Sigma_{\Gamma_q}$ which is diffeomorphic to $\Sigma_{L_q}$ so that the image of each curves are
\begin{eqnarray*}
a_i &\rightarrow&  \alpha_i \phantom{0000}(i=1,\dots, q+6)
\\
b_{j} &\rightarrow& \gamma_{j} \phantom{0000}(i=5, q+6)
\end{eqnarray*}
\begin{figure}[h]
\begin{tikzpicture}[scale=0.8]
\draw (0,0) node[above]{$-2$};
\draw (1,0) node[]{$\cdots$};
\draw (2,0) node[above]{$-2$};
\draw (3,0) node[above]{$-6$};
\draw (4,0) node[above]{$-2$};

\draw (5,0) node[below]{$-(q+3)$};

\draw (0,0)--(.5,0) (1.5,0)--(5,0);
\draw (0,0 ) node[circle, fill, inner sep=1.5pt, black]{};
\draw (2,0 ) node[circle, fill, inner sep=1.5pt, black]{};

\draw (3,0 ) node[circle, fill, inner sep=1.5pt, black]{};
\draw (4,0 ) node[circle, fill, inner sep=1.5pt, black]{};
\draw (5,0 ) node[circle, fill, inner sep=1.5pt, black]{};

\draw (0,-.5) node{};

\draw [decorate,decoration={brace,mirror,amplitude=5pt},xshift=0pt,yshift=-5]
	(0,0) -- (2,0) node [black,midway,yshift=-15pt] 
	{\footnotesize $q+1$};

\draw (0,-1.5) node[]{};

\end{tikzpicture}
\begin{tikzpicture}
\begin{scope}[shift={(6,.75)},scale=0.16]
\draw (7,0)--(34,0);
\draw (7,-6)--(34,-6);
\draw (7,0) arc (90:270:3);
\draw (34,0) arc (90:-90:3);
\draw (7,-3) circle (1);
\draw (10,-3) circle (1);

\draw (13,-3) circle (1);
\draw (16,-3) circle (1);
\draw (19,-3) circle (1);
\draw (22,-3) circle (1);


\draw (25,-3) circle (1);

\draw (28.,-3 ) node[circle, fill, inner sep=.5pt, black]{};
\draw (29.,-3 ) node[circle, fill, inner sep=.5pt, black]{};
\draw (30.,-3 ) node[circle, fill, inner sep=.5pt, black]{};

\draw (33,-3) circle (1);

\draw [decorate,decoration={brace,mirror,amplitude=5pt},xshift=0pt,yshift=0]
	(26,-6.5) -- (33,-6.5) node [black,midway,yshift=-15pt] 
	{\footnotesize $q$};

\end{scope}

\end{tikzpicture}
\caption{Linear plumbing $L_q$ and the generic fiber $\Sigma_{L_q}$ for $X_{L_q}$}
\label{i1}
\end{figure}
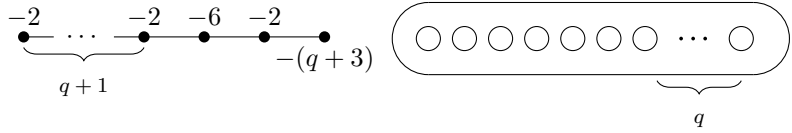
The linear plumbing $L_q$ can be rationally blowdown and the relation in the mapping class group of $\Sigma_{L_q}$ for the rational blowdown was given by Endo-Mark-Van Horn-Morris in 
~\cite{MR2783383}.
$$
a_1a_2a_3a_4a_5^{q+2}b_5^2a_6\cdots a_{q+6}b_{q+6}=y_1y_2\cdots y_{q+6}
$$
Let $Y_i$ a simple closed curve in $\Sigma_{\Gamma_q}$ be the image of $y_i$ in $\Sigma_{L_q}$ which can be drawn as in Figure~\ref{i2} to Figure~\ref{i5}. Then we have
\begin{eqnarray*}
&&
\beta_1
\beta_2
\delta_2^{-1}
\underline{
\alpha_1
\alpha_2
\alpha_3
\alpha_4
\alpha_5^{q+2}
\gamma_5^{2}
\alpha_6
\cdots
\alpha_{q+6}
\gamma_{q+6}
}
\cdot
(t_{\alpha_6}^{-1}\cdot t_{\gamma_5}^{-2}\cdot t_{\alpha_5}^{-1})(\delta_2)
\cdot
(t_{\alpha_6}^{-1}\cdot t_{\gamma_5}^{-1})(\delta_2)
\\
&=&
\beta_1
\beta_2
\delta_2^{-1}
Y_1
\cdots
Y_{q+5}
\cdot
Y_{q+6}
\cdot
(t_{\alpha_6}^{-1}\cdot t_{\gamma_5}^{-2}\cdot t_{\alpha_5}^{-1})(\delta_2)
\cdot
(t_{\alpha_6}^{-1}\cdot t_{\gamma_5}^{-1})(\delta_2)
\\
&=&
\beta_1
\beta_2
\delta_2^{-1}
Y_1
\cdots
Y_{q+5}
\cdot
Y_{q+6}
\cdot
(t_{\alpha_6}^{-1}\cdot t_{\gamma_5}^{-1})(\delta_2)
\cdot
(t_{\alpha_6}^{-1}\cdot t_{\alpha_5})(\delta_2)
\end{eqnarray*}
\begin{figure}[h]
\begin{tikzpicture}[scale=0.14]

\draw (7,0)--(20,0) (21.5,0)--(24.5,0) (26,0)--(39,0);
\draw (7,-6)--(39,-6);
\draw (7,0) arc (90:270:3);
\draw (39,0) arc (90:-90:3);
\draw (7,-3) circle (1);
\draw (10,-3) circle (1);
\draw (13,-3) circle (1);
\draw (16,-3) circle (1);

\draw[red] (10,-1)--(20,-1);
\draw[red] (10,-5)--(21.5,-5);
\draw[red] (21.5,-5) to [out=right, in=down] (23,-3)
to [out=up,in=right] (21.5,-1);
\draw[red] (10,-5) to [out=left, in=down] (8.5,-3)
to [out=up,in=left] (10,-1);
\draw[red] (13,0) node[above]{$Y_1$};

\draw (20,-2) arc (90:270:1);
\draw (21.5,-2) arc (90:-90:1);

\draw (21.5,-2) arc (90:-90:1);
\draw (20,-4)--(20,7.5);
\draw (21.5,-4)--(21.5,2.5);

\draw (21.5,2.5) arc (180:0:1.5);
\draw (20,7.5) arc (180:0:3);

\draw (23, 6.5) circle (1);

\draw (23,18) node[]{};
\draw (24+2,-2) arc (90:-90:1);
\draw (24.5,-2) arc (90:270:1);

\draw (26,-4)--(26,7.5);
\draw (24.5,-4)--(24.5,2.5);


\draw (30,-3) circle (1);
\draw (34,-3) node{$\cdots$};
\draw (38,-3) circle (1);

\draw [decorate,decoration={brace,mirror,amplitude=5pt},xshift=0pt,yshift=0]
	(30,-6.5) -- (38,-6.5) node [black,midway,yshift=-15pt] 
	{\footnotesize $q$};
\begin{scope}[shift={(41,0)}]

\draw (7,0)--(20,0) (21.5,0)--(24.5,0) (26,0)--(39,0);
\draw (7,-6)--(39,-6);
\draw (7,0) arc (90:270:3);
\draw (39,0) arc (90:-90:3);
\draw (7,-3) circle (1);
\draw (10,-3) circle (1);
\draw (13,-3) circle (1);
\draw (16,-3) circle (1);

\draw[red] (21.5,-5.5) to [out=right, in=down] (23,-3)
to [out=up,in=right] (21.5,-.5);
\draw[red] (7,-5.5) to [out=left, in=down] (5.5,-3)
to [out=up,in=left] (7,-.5);
\draw[red] (7,-5.5) to [out=right, in=down] (8.5,-3)
to [out=up, in=left] (10,-1)
to [out=right, in=up] (11.5,-3)
to [out=down, in=left] (13,-5.5);
\draw[red] (7,-.5)--(20,-.5) (13,-5.5)--(21.5,-5.5);
\draw[red] (13,0) node[above]{$Y_2$};

\draw (20,-2) arc (90:270:1);
\draw (21.5,-2) arc (90:-90:1);

\draw (21.5,-2) arc (90:-90:1);
\draw (20,-4)--(20,7.5);
\draw (21.5,-4)--(21.5,2.5);

\draw (21.5,2.5) arc (180:0:1.5);
\draw (20,7.5) arc (180:0:3);

\draw (23, 6.5) circle (1);

\draw (23,18) node[]{};
\draw (24+2,-2) arc (90:-90:1);
\draw (24.5,-2) arc (90:270:1);

\draw (26,-4)--(26,7.5);
\draw (24.5,-4)--(24.5,2.5);


\draw (30,-3) circle (1);
\draw (34,-3) node{$\cdots$};
\draw (38,-3) circle (1);

\draw [decorate,decoration={brace,mirror,amplitude=5pt},xshift=0pt,yshift=0]
	(30,-6.5) -- (38,-6.5) node [black,midway,yshift=-15pt] 
	{\footnotesize $q$};

\end{scope}

\end{tikzpicture}
\caption{$Y_1$ and $Y_2$ in $\Sigma_{\Gamma_q}$}
\label{i2}
\end{figure}
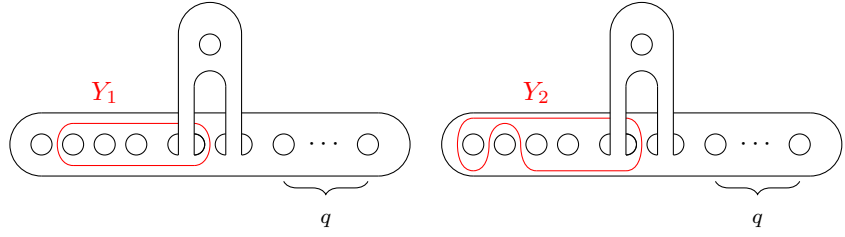
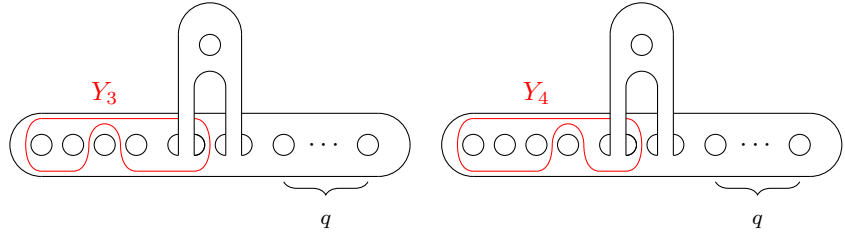
\begin{figure}[h]
\begin{tikzpicture}[scale=0.14]

\draw (7,0)--(20,0) (21.5,0)--(24.5,0) (26,0)--(39,0);
\draw (7,-6)--(39,-6);
\draw (7,0) arc (90:270:3);
\draw (39,0) arc (90:-90:3);
\draw (7,-3) circle (1);
\draw (10,-3) circle (1);
\draw (13,-3) circle (1);
\draw (16,-3) circle (1);

\draw[red] (21.5,-5.5) to [out=right, in=down] (23,-3)
to [out=up,in=right] (21.5,-.5);
\draw[red] (10,-5.5)--(7,-5.5) to [out=left, in=down] (5.5,-3)
to [out=up,in=left] (7,-.5);
\draw[red] (10,-5.5) to [out=right, in=down] (11.5,-3)
to [out=up, in=left] (13,-1)
to [out=right, in=up] (14.5,-3)
to [out=down, in=left] (16,-5.5);
\draw[red] (7,-.5)--(20,-.5) (16,-5.5)--(21.5,-5.5);
\draw[red] (13,0) node[above]{$Y_3$};

\draw (20,-2) arc (90:270:1);
\draw (21.5,-2) arc (90:-90:1);

\draw (21.5,-2) arc (90:-90:1);
\draw (20,-4)--(20,7.5);
\draw (21.5,-4)--(21.5,2.5);

\draw (21.5,2.5) arc (180:0:1.5);
\draw (20,7.5) arc (180:0:3);

\draw (23, 6.5) circle (1);

\draw (23,18) node[]{};
\draw (24+2,-2) arc (90:-90:1);
\draw (24.5,-2) arc (90:270:1);

\draw (26,-4)--(26,7.5);
\draw (24.5,-4)--(24.5,2.5);


\draw (30,-3) circle (1);
\draw (34,-3) node{$\cdots$};
\draw (38,-3) circle (1);

\draw [decorate,decoration={brace,mirror,amplitude=5pt},xshift=0pt,yshift=0]
	(30,-6.5) -- (38,-6.5) node [black,midway,yshift=-15pt] 
	{\footnotesize $q$};
\begin{scope}[shift={(41,0)}]

\draw (7,0)--(20,0) (21.5,0)--(24.5,0) (26,0)--(39,0);
\draw (7,-6)--(39,-6);
\draw (7,0) arc (90:270:3);
\draw (39,0) arc (90:-90:3);
\draw (7,-3) circle (1);
\draw (10,-3) circle (1);
\draw (13,-3) circle (1);
\draw (16,-3) circle (1);

\draw[red] (21.5,-5.5) to [out=right, in=down] (23,-3)
to [out=up,in=right] (21.5,-.5);
\draw[red] (13,-5.5)--(7,-5.5) to [out=left, in=down] (5.5,-3)
to [out=up,in=left] (7,-.5);
\draw[red] (13,-5.5) to [out=right, in=down] (14.5,-3)
to [out=up, in=left] (16,-1)
to [out=right, in=up] (17.5,-3)
to [out=down, in=left] (19,-5.5);
\draw[red] (7,-.5)--(20,-.5) (19,-5.5)--(21.5,-5.5);
\draw[red] (13,0) node[above]{$Y_4$};

\draw (20,-2) arc (90:270:1);
\draw (21.5,-2) arc (90:-90:1);

\draw (21.5,-2) arc (90:-90:1);
\draw (20,-4)--(20,7.5);
\draw (21.5,-4)--(21.5,2.5);

\draw (21.5,2.5) arc (180:0:1.5);
\draw (20,7.5) arc (180:0:3);

\draw (23, 6.5) circle (1);

\draw (23,18) node[]{};
\draw (24+2,-2) arc (90:-90:1);
\draw (24.5,-2) arc (90:270:1);

\draw (26,-4)--(26,7.5);
\draw (24.5,-4)--(24.5,2.5);


\draw (30,-3) circle (1);
\draw (34,-3) node{$\cdots$};
\draw (38,-3) circle (1);

\draw [decorate,decoration={brace,mirror,amplitude=5pt},xshift=0pt,yshift=0]
	(30,-6.5) -- (38,-6.5) node [black,midway,yshift=-15pt] 
	{\footnotesize $q$};

\end{scope}

\end{tikzpicture}
\caption{$Y_3$ and $Y_4$ in $\Sigma_{\Gamma_q}$}
\label{i3}
\end{figure}
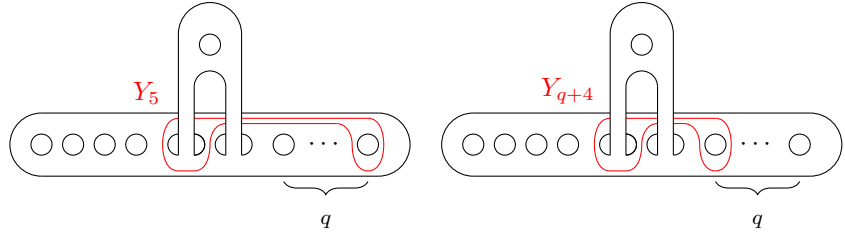
\begin{figure}[h]
\begin{tikzpicture}[scale=0.14]

\draw (7,0)--(20,0) (21.5,0)--(24.5,0) (26,0)--(39,0);
\draw (7,-6)--(39,-6);
\draw (7,0) arc (90:270:3);
\draw (39,0) arc (90:-90:3);
\draw (7,-3) circle (1);
\draw (10,-3) circle (1);
\draw (13,-3) circle (1);
\draw (16,-3) circle (1);

\draw[red] (20,-.5) to [out=left,in=up](18.5,-3)
to [out=down,in=left] (20,-5.5)--(21.5,-5.5)
to [out=right, in=down] (23,-3)
to [out=up,in=left] (24.5,-1) (24.5,-.5)--(21.5,-.5);

\draw[red] (38,-.5) to [out=right,in=up] (39.5,-3)
to [out=down,in=right] (38,-5.5)
to [out=left,in=down] (36.5,-3)
to [out=up,in=right] (35,-1);

\draw[red] (26,-.5)--(38,-.5);
\draw[red] (26,-1)--(35,-1);

\draw[red] (17,0) node[above]{$Y_5$};

\draw (20,-2) arc (90:270:1);
\draw (21.5,-2) arc (90:-90:1);

\draw (21.5,-2) arc (90:-90:1);
\draw (20,-4)--(20,7.5);
\draw (21.5,-4)--(21.5,2.5);

\draw (21.5,2.5) arc (180:0:1.5);
\draw (20,7.5) arc (180:0:3);

\draw (23, 6.5) circle (1);

\draw (23,18) node[]{};
\draw (24+2,-2) arc (90:-90:1);
\draw (24.5,-2) arc (90:270:1);

\draw (26,-4)--(26,7.5);
\draw (24.5,-4)--(24.5,2.5);


\draw (30,-3) circle (1);
\draw (34,-3) node{$\cdots$};
\draw (38,-3) circle (1);

\draw [decorate,decoration={brace,mirror,amplitude=5pt},xshift=0pt,yshift=0]
	(30,-6.5) -- (38,-6.5) node [black,midway,yshift=-15pt] 
	{\footnotesize $q$};
\begin{scope}[shift={(41,0)}]

\draw (7,0)--(20,0) (21.5,0)--(24.5,0) (26,0)--(39,0);
\draw (7,-6)--(39,-6);
\draw (7,0) arc (90:270:3);
\draw (39,0) arc (90:-90:3);
\draw (7,-3) circle (1);
\draw (10,-3) circle (1);
\draw (13,-3) circle (1);
\draw (16,-3) circle (1);

\draw[red] (20,-.5) to [out=left,in=up](18.5,-3)
to [out=down,in=left] (20,-5.5)--(21.5,-5.5)
to [out=right, in=down] (23,-3)
to [out=up,in=left] (24.5,-1) (24.5,-.5)--(21.5,-.5);

\draw[red] (30,-.5) to [out=right,in=up] (31.5,-3)
to [out=down,in=right] (30,-5.5)
to [out=left,in=down] (28.5,-3)
to [out=up,in=right] (27,-1);

\draw[red] (26,-.5)--(30,-.5);
\draw[red] (26,-1)--(27,-1);
\draw[red] (16,0) node[above]{$Y_{q+4}$};

\draw (20,-2) arc (90:270:1);
\draw (21.5,-2) arc (90:-90:1);

\draw (21.5,-2) arc (90:-90:1);
\draw (20,-4)--(20,7.5);
\draw (21.5,-4)--(21.5,2.5);

\draw (21.5,2.5) arc (180:0:1.5);
\draw (20,7.5) arc (180:0:3);

\draw (23, 6.5) circle (1);

\draw (23,18) node[]{};
\draw (24+2,-2) arc (90:-90:1);
\draw (24.5,-2) arc (90:270:1);

\draw (26,-4)--(26,7.5);
\draw (24.5,-4)--(24.5,2.5);


\draw (30,-3) circle (1);
\draw (34,-3) node{$\cdots$};
\draw (38,-3) circle (1);

\draw [decorate,decoration={brace,mirror,amplitude=5pt},xshift=0pt,yshift=0]
	(30,-6.5) -- (38,-6.5) node [black,midway,yshift=-15pt] 
	{\footnotesize $q$};

\end{scope}

\end{tikzpicture}
\caption{$Y_i$ in $\Sigma_{\Gamma_q}$ $(i=5,\dots, {q+4})$}
\label{i4}
\end{figure}
\begin{figure}[h]
\begin{tikzpicture}[scale=0.14]

\draw (7,0)--(20,0) (21.5,0)--(24.5,0) (26,0)--(39,0);
\draw (7,-6)--(39,-6);
\draw (7,0) arc (90:270:3);
\draw (39,0) arc (90:-90:3);
\draw (7,-3) circle (1);
\draw (10,-3) circle (1);
\draw (13,-3) circle (1);
\draw (16,-3) circle (1);

\draw[red] (21.5,-1)--(24.5,-1);
\draw[red] (20,-5)--(26,-5);
\draw[red] (26,-5) to [out=right, in=down] (27.5,-3)
to [out=up,in=right] (26,-1);
\draw[red] (20,-5) to [out=left, in=down] (18.5,-3)
to [out=up,in=left] (20,-1);
\draw[red] (16,0) node[above]{$Y_{q+5}$};

\draw (20,-2) arc (90:270:1);
\draw (21.5,-2) arc (90:-90:1);

\draw (21.5,-2) arc (90:-90:1);
\draw (20,-4)--(20,7.5);
\draw (21.5,-4)--(21.5,2.5);

\draw (21.5,2.5) arc (180:0:1.5);
\draw (20,7.5) arc (180:0:3);

\draw (23, 6.5) circle (1);

\draw (23,18) node[]{};
\draw (24+2,-2) arc (90:-90:1);
\draw (24.5,-2) arc (90:270:1);

\draw (26,-4)--(26,7.5);
\draw (24.5,-4)--(24.5,2.5);


\draw (30,-3) circle (1);
\draw (34,-3) node{$\cdots$};
\draw (38,-3) circle (1);

\draw [decorate,decoration={brace,mirror,amplitude=5pt},xshift=0pt,yshift=0]
	(30,-6.5) -- (38,-6.5) node [black,midway,yshift=-15pt] 
	{\footnotesize $q$};
\begin{scope}[shift={(39,0)}]

\draw (7,0)--(20,0) (21.5,0)--(24.5,0) (26,0)--(39,0);
\draw (7,-6)--(39,-6);
\draw (7,0) arc (90:270:3);
\draw (39,0) arc (90:-90:3);
\draw (7,-3) circle (1);
\draw (10,-3) circle (1);
\draw (13,-3) circle (1);
\draw (16,-3) circle (1);

\draw[red] (38,-5.5) to [out=right, in=down] (39.5,-3)
to [out=up,in=right] (38,-.5);
\draw[red] (7,-5.5) to [out=left, in=down] (5.5,-3)
to [out=up,in=left] (7,-.5);
\draw[red] (17,-5.5) to [out=right, in=down] (18.5,-3)
to [out=up, in=left] (20,-1)
;
\draw[red] (7,-.5)--(20,-.5) (17,-5.5)--(7,-5.5);
\draw[red] (13,0) node[above]{$Y_{q+6}$};

\draw[red] (21.5,-.5)--(24.5,-.5) (26,-.5)--(38,-.5);
\draw[red] (21.5,-1) to [out=right,in=up] (23,-3)
to [out=down, in=left] (24.5,-5.5)--(38,-5.5) ;
\draw (20,-2) arc (90:270:1);
\draw (21.5,-2) arc (90:-90:1);

\draw (21.5,-2) arc (90:-90:1);
\draw (20,-4)--(20,7.5);
\draw (21.5,-4)--(21.5,2.5);

\draw (21.5,2.5) arc (180:0:1.5);
\draw (20,7.5) arc (180:0:3);

\draw (23, 6.5) circle (1);

\draw (23,18) node[]{};
\draw (24+2,-2) arc (90:-90:1);
\draw (24.5,-2) arc (90:270:1);

\draw (26,-4)--(26,7.5);
\draw (24.5,-4)--(24.5,2.5);


\draw (30,-3) circle (1);
\draw (34,-3) node{$\cdots$};
\draw (38,-3) circle (1);

\draw [decorate,decoration={brace,mirror,amplitude=5pt},xshift=0pt,yshift=0]
	(30,-6.5) -- (38,-6.5) node [black,midway,yshift=-15pt] 
	{\footnotesize $q$};

\end{scope}

\end{tikzpicture}
\caption{$Y_{q+5}$ and $Y_{q+6}$ in $\Sigma_{\Gamma_q}$}
\label{i5}
\end{figure}
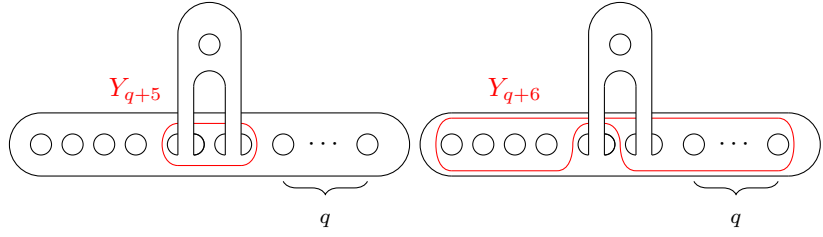
Since
\begin{eqnarray*}
&&
(t_{\alpha_6}^{-1}\cdot t_{\gamma_5}^{-2}\cdot t_{\alpha_5}^{-1})(\delta_2)
\cdot
(t_{\alpha_6}^{-1}\cdot t_{\gamma_5}^{-1})(\delta_2)
\\
&=&
\alpha_6^{-1}
\gamma_5^{-2}
\alpha_5^{-1}
\delta_2
\alpha_5
\gamma_5
\delta_2
\gamma_5
\alpha_6
\\
&=&
\alpha_6^{-1}
\gamma_5^{-2}
\alpha_5^{-1}
\delta_2
\alpha_5
\delta_2
\gamma_5
\delta_2
\alpha_6
\\
&=&
\alpha_6^{-1}
\gamma_5^{-2}
\alpha_5^{-1}
\alpha_5
\delta_2
\alpha_5
\gamma_5
\delta_2
\alpha_6
\\
&=&
\alpha_6^{-1}
\gamma_5^{-2}
\delta_2
\gamma_5
\alpha_5
\delta_2
\alpha_5
\alpha_5^{-1}
\alpha_6
\\
&=&
\alpha_6^{-1}
\gamma_5^{-2}
\delta_2
\gamma_5
\delta_2
\alpha_5
\delta_2
\alpha_5^{-1}
\alpha_6
\\
&=&
\alpha_6^{-1}
\gamma_5^{-2}
\gamma_5
\delta_2
\gamma_5
\alpha_5
\delta_2
\alpha_5^{-1}
\alpha_6
\\
&=&
\alpha_6^{-1}
\gamma_5^{-1}
\delta_2
\gamma_5
\alpha_6
\cdot
\alpha_6^{-1}
\alpha_5
\delta_2
\alpha_5^{-1}
\alpha_6
\\
&=&
(t_{\alpha_6}^{-1}\cdot t_{\gamma_5}^{-1})(\delta_2)
\cdot
(t_{\alpha_6}^{-1}\cdot t_{\alpha_5})(\delta_2)
\end{eqnarray*}
Note that
$\beta_1\cdot\beta_2\cdot Y_{q+5} \cdot (t_{\alpha_6}^{-1} \cdot t_{\alpha_5})(\delta_2)
=\delta_2\cdot Z\cdot W$ for some simple closed curves $W$ and $Z$ in $\Sigma_{\Gamma_q}$ due to the lantern relation. See Figure~\ref{i6} for corresponding curves in a planar surface. By performing a lantern substitution and cancelling $\delta_2$ with $\delta_2^{-1}$ after rearranging the word, we get a monodromy factorization $W_{\Gamma_q}'$ whose length is $b_1(\Sigma_{\Gamma_q})$.

\begin{eqnarray*}
&&
\beta_1
\beta_2
\delta_2^{-1}
Y_1
\cdots
Y_{q+5}
\cdot
Y_{q+6}
\cdot
(t_{\alpha_6}^{-1}\cdot t_{\gamma_5}^{-1})(\delta_2)
\cdot
(t_{\alpha_6}^{-1}\cdot t_{\alpha_5})(\delta_2)
\\
&=&
\beta_1
\beta_2
\delta_2^{-1}
Y_1
\cdots
Y_{q+4}
\cdot
t_{Y_{q+5}}(Y_{q+6})
\cdot
(t_{Y_{q+5}}\cdot t_{\alpha_6}^{-1}\cdot t_{\gamma_5}^{-1})(\delta_2)
\cdot
Y_{q+5}
\cdot
(t_{\alpha_6}^{-1}\cdot t_{\alpha_5})(\delta_2)
\\
&=&
(t_{\delta_2}^{-1}\cdot t_{\beta_1})(Y_1)
\cdot
(t_{\delta_2}^{-1}\cdot t_{\beta_1})(Y_2)
\cdots
(t_{\delta_2}^{-1}\cdot t_{\beta_1})(Y_{q+4})
\cdot
(t_{\delta_2}^{-1}\cdot t_{\beta_1} \cdot t_{Y_{q+5}})(Y_{q+6})
\\
&&
\cdot
(t_{\delta_2}^{-1} \cdot t_{\beta_1} \cdot t_{Y_{q+5}}\cdot t_{\alpha_6}^{-1}\cdot t_{\gamma_5}^{-1})(\delta_2)
\cdot
\delta_2^{-1}
\cdot
\beta_1
\cdot
\beta_2
\cdot
Y_{q+5}
\cdot
(t_{\alpha_6}^{-1} \cdot t_{\alpha_5})(\delta_2)
\\
&=&
(t_{\delta_2}^{-1}\cdot t_{\beta_1})(Y_1)
\cdot
(t_{\delta_2}^{-1}\cdot t_{\beta_1})(Y_2)
\cdots
(t_{\delta_2}^{-1}\cdot t_{\beta_1})(Y_{q+4})
\cdot
(t_{\delta_2}^{-1}\cdot t_{\beta_1} \cdot t_{Y_{q+5}})(Y_{q+6})
\\
&&
\cdot
(t_{\delta_2}^{-1} \cdot t_{\beta_1} \cdot t_{Y_{q+5}}\cdot t_{\alpha_6}^{-1}\cdot t_{\gamma_5}^{-1})(\delta_2)
\cdot
Z
\cdot
W
\end{eqnarray*}
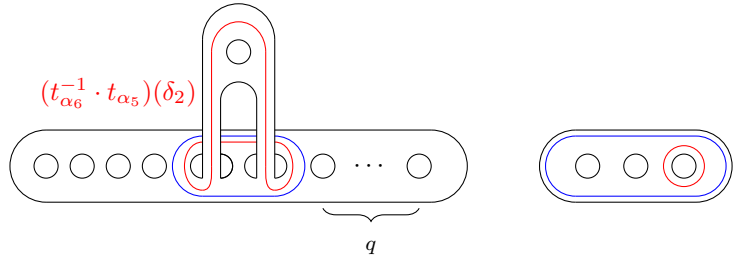
\begin{figure}[h]
\begin{tikzpicture}[scale=0.16]

\draw (7,0)--(20,0) (21.5,0)--(24.5,0) (26,0)--(39,0);
\draw (7,-6)--(39,-6);
\draw (7,0) arc (90:270:3);
\draw (39,0) arc (90:-90:3);
\draw (7,-3) circle (1);
\draw (10,-3) circle (1);
\draw (13,-3) circle (1);
\draw (16,-3) circle (1);

\draw[red] (21.5,-1)--(24.5,-1);
\draw[red] (20,-5) to[out=right,in=down] (20.75,-3);
\draw[red] (25.25,-3) to[out=down,in=left] (26,-5);
\draw[red] (25.25,-3)--(25.25,6.5) (20.75,-3)--(20.75,6.5);
\draw[red] (25.25,6.5) to [out=up,in=right] (23,9)
to [out=left,in=up](20.75,6.5);

\draw[red] (26,-5) to [out=right, in=down] (27.5,-3)
to [out=up,in=right] (26,-1);
\draw[red] (20,-5) to [out=left, in=down] (18.5,-3)
to [out=up,in=left] (20,-1);
\draw[red] (13,1) node[above]{$(t_{\alpha_6}^{-1} \cdot t_{\alpha_5})(\delta_2)$};

\draw[blue] (20,-.5) arc (90:270:2.5);
\draw[blue] (26,-.5) arc (90:-90:2.5);
\draw[blue] (21.5,-.5)--(24.5,-.5) (20,-5.5)--(26,-5.5);

\draw (20,-2) arc (90:270:1);
\draw (21.5,-2) arc (90:-90:1);

\draw (21.5,-2) arc (90:-90:1);
\draw (20,-4)--(20,7.5);
\draw (21.5,-4)--(21.5,2.5);

\draw (21.5,2.5) arc (180:0:1.5);
\draw (20,7.5) arc (180:0:3);

\draw (23, 6.5) circle (1);

\draw (23,18) node[]{};
\draw (24+2,-2) arc (90:-90:1);
\draw (24.5,-2) arc (90:270:1);

\draw (26,-4)--(26,7.5);
\draw (24.5,-4)--(24.5,2.5);


\draw (30,-3) circle (1);
\draw (34,-3) node{$\cdots$};
\draw (38,-3) circle (1);

\draw [decorate,decoration={brace,mirror,amplitude=5pt},xshift=0pt,yshift=0]
	(30,-6.5) -- (38,-6.5) node [black,midway,yshift=-15pt] 
	{\footnotesize $q$};
\begin{scope}[shift={(32,0)}]
\draw (19,4) node[]{};
\draw (19,0)--(29,0);
\draw (19,-6)--(29,-6);
\draw (19,0) arc (90:270:3);
\draw (29,0) arc (90:-90:3);

\draw (20,-3) circle (1);
\draw (24,-3) circle (1);
\draw (28,-3) circle (1);
\draw[red] (28,-3) circle (1.7);

\draw[blue] (19,-.5)--(29,-.5);
\draw[blue] (19,-5.5)--(29,-5.5);
\draw[blue] (19,-.5) arc (90:270:2.5);
\draw[blue] (29,-.5) arc (90:-90:2.5);

\end{scope}	

\end{tikzpicture}
\caption{Corresponding curves for the lantern relation}
\label{i6}
\end{figure}

 \end{document}